\numberwithin{equation}{section}
\newtheorem{theorem}{Theorem}[section]
\newtheorem{lemma}[theorem]{Lemma}
\newtheorem{corollary}[theorem]{Corollary}
\newtheorem{remark}[theorem]{Remark}
\newtheorem{proposition}[theorem]{Proposition}
\newtheorem{definition}[theorem]{Definition}
\newtheorem{example}[theorem]{Example}
\newtheorem*{property}{Property}
\newcommand{\dd}{\,\mathrm{d}}
\renewcommand{\d}{\mathrm{d}}
\newcommand{\D}{\mathrm{D}}
\renewcommand{\epsilon}{\varepsilon}
\renewcommand{\phi}{\varphi}
\newcommand{\R}{\mathbb{R}}
\newcommand{\N}{\mathbb{N}}
\newcommand{\X}{\mathbb{X}}
\renewcommand{\P}{\mathbb{P}}
\newcommand{\1}{\mathbf{1}}
\newcommand{\bS}{\mathbf{S}}
\newcommand{\bX}{\mathbf{X}}
\newcommand{\tX}{\tilde{X}}
\newcommand{\tZ}{\tilde{Z}}
\newcommand{\tbbX}{\tilde{\X}}
\newcommand{\tbX}{\tilde{\bX}}
\newcommand{\rp}{\mathscr{V}^p}
\newcommand{\crpZ}{\mathcal{V}^{q,r}_Z}
\newcommand{\crpX}{\mathcal{V}^{q,r}_X}
\newcommand{\crptZ}{\mathcal{V}^{q,r}_{\tilde{Z}}}
\newcommand{\crptX}{\mathcal{V}^{q,r}_{\tilde{X}}}
\newcommand{\cP}{\mathcal{P}}
\newcommand{\tf}{\tilde{f}}
\newcommand{\cV}{\mathcal{V}}
\newcommand{\tphi}{\tilde{\phi}}
\newcommand{\cL}{\mathcal{L}}
\newcommand{\tS}{\tilde{S}}
\newcommand{\tbS}{\tilde{\mathbf{S}}}
\newcommand{\cG}{\mathcal{G}}
\newcommand{\tF}{\tilde{F}}
\newcommand{\tG}{\tilde{G}}
\newcommand{\crpmuq}{\mathcal{V}^{q,r}_\mu}
\title[A C{\`a}dl{\`a}g Rough Path Foundation for Robust Finance]{A C{\`a}dl{\`a}g Rough Path Foundation\\
for Robust Finance}
\author[Allan]{Andrew L. Allan}
\address{Andrew L. Allan, Durham University, United Kingdom}
\email{andrew.l.allan@durham.ac.uk}
\author[Liu]{Chong Liu}
\address{Chong Liu, ShanghaiTech University, China}
\email{liuchong@shanghaitech.edu.cn}
\author[Pr{\"o}mel]{David J. Pr{\"o}mel}
\address{David J. Pr{\"o}mel, University of Mannheim, Germany}
\email{proemel@uni-mannheim.de}
\date{\today}
\begin{document}

\begin{abstract}
  Using rough path theory, we provide a pathwise foundation for stochastic It{\^o} integration, which covers most commonly applied trading strategies and mathematical models of financial markets, including those under Knightian uncertainty. To this end, we introduce the so-called Property (RIE) for c{\`a}dl{\`a}g paths, which is shown to imply the existence of a c{\`a}dl{\`a}g rough path and of quadratic variation in the sense of F{\"o}llmer. We prove that the corresponding rough integrals exist as limits of left-point Riemann sums along a suitable sequence of partitions. This allows one to treat integrands of non-gradient type, and gives access to the powerful stability estimates of rough path theory. Additionally, we verify that (path-dependent) functionally generated trading strategies and Cover's universal portfolio are admissible integrands, and that Property (RIE) is satisfied by both (Young) semimartingales and typical price paths.
\end{abstract}

\maketitle

\noindent \textbf{Key words:} F{\"o}llmer integration, model uncertainty, semimartingale, pathwise integration, rough path, functionally generated portfolios, universal portfolio.

\noindent \textbf{MSC 2020 Classification:} 91G80, 60L20, 60G44.

\noindent \textbf{JEL Classification:} C50, G10, G11.


\bigskip

\section{Introduction}

A fundamental pillar of mathematical finance is the theory of stochastic integration initiated by K.~It{\^o} in the 1940s. It{\^o}'s stochastic integration not only allows for a well-posedness theory for most probabilistic models of financial markets, but also comes with invaluable properties, such as having an integration by parts formula and chain rule, and that of being a continuous operator (with respect to suitable spaces of random variables), which is essential for virtually all applications. However, despite the elegance and success of It{\^o} integration, it also admits some significant drawbacks from both theoretical and practical perspectives.

The construction of the It{\^o} integral requires one to fix a probability measure a priori, and is usually based on a limiting procedure of approximating Riemann sums in probability. While in mathematical finance the It{\^o} integral usually represents the capital gain process from continuous-time trading in a financial market, it lacks a robust pathwise meaning. That is, the stochastic It{\^o} integral does not have a well-defined value on a given ``state of the world'', e.g.~a realized price trajectory of a liquidly traded asset on a stock exchange. This presents a gap between probabilistic models and their financial interpretation. Addressing the pathwise meaning of stochastic integration has led to a stream of literature beginning with the classical works of Bichteler \cite{Bichteler1981} and Willinger and Taqqu \cite{Willinger1989}; see also Karandikar \cite{Karandikar1995} and Nutz \cite{Nutz2012}.

The requirement of fixing a probability measure to have access to It{\^o} integration becomes an even more severe obstacle when one wants to develop mathematical finance under model risk---also known as Knightian uncertainty. Starting from the seminal works of Avellaneda, Levy and Par\'as \cite{Avellaneda1995} and Lyons \cite{Lyons1995}, there has been an enormous and on-going effort to treat the challenges posed by model risk in mathematical finance, that is, the risk stemming from the possible misspecification of an adopted stochastic model, typically represented by a single fixed probability measure. The majority of the existing robust treatments of financial modelling replace the single probability measure by a family of (potentially singular) probability measures, or even take so-called model-free approaches, whereby no probabilistic structure of the underlying price trajectories is assumed; see for example Hobson \cite{Hobson2003} for classical lecture notes on robust finance. In particular, the latter model-free approaches often require a purely deterministic integration theory sophisticated enough to handle the irregular sample paths of standard continuous-time financial models and commonly employed functionally generated trading strategies.

In the seminal paper~\cite{Follmer1981}, F{\"o}llmer provided the first deterministic analogue to stochastic It{\^o} integration which had the desired properties required by financial applications. Indeed, assuming that a c{\`a}dl{\`a}g path $S \colon [0,T] \to \R^d$ possesses a suitable notion of quadratic variation along a sequence $(\mathcal{P}^n)_{n \in \N}$ of partitions of the interval $[0,T]$, F{\"o}llmer proved that the limit
\begin{equation*}
\int_0^t \D f(S_u)\dd S_u := \lim_{n\to \infty} \sum_{[u,v]\in \mathcal{P}^n} \D f(S_u) (S_{v\wedge t}-S_{u\wedge t}), \qquad t \in [0,T],
\end{equation*}
where $\D f$ denotes the gradient of $f$, exists for all twice continuously differentiable functions $f \colon \R^d \to \R$. The resulting pathwise integral $\int_0^t \D f(S_u)\dd S_u$ is often called the F{\"o}llmer integral, and has proved to be a valuable tool in various applications in model-free finance; for some recent examples we refer to F{\"o}llmer and Schied \cite{Follmer2013}, Davis, Ob\l{}\'oj and Raval \cite{Davis2014}, Schied, Speiser and Voloshchenko \cite{Schied2018} and Cuchiero, Schachermayer and Wong \cite{Cuchiero2019}. In fact, even classical Riemann--Stieltjes integration has been successfully used as a substitution to It{\^o} integration in model-free finance; see e.g.~Dolinsky and Soner \cite{Dolinsky2014} or Hou and Ob\l{}\'oj \cite{Hou2018}.

By now arguably the most general pathwise (stochastic) integration theory is provided by the theory of rough paths, as introduced by Lyons \cite{Lyons1998}, and its recent extension to c{\`a}dl{\`a}g rough paths by Friz and Shekhar \cite{Friz2017}, Friz and Zhang \cite{Friz2018} and Chevyrev and Friz \cite{Chevyrev2019}. Rough integration can be viewed as a generalization of Young integration which is able to handle paths of lower regularity. While rough integration allows one to treat the sample paths of numerous stochastic processes as integrators and offers powerful pathwise stability estimates, it comes with a pitfall from a financial perspective: the rough integral is defined as a limit of so-called compensated Riemann sums, and thus apparently does not correspond to the canonical financial interpretation as the capital gain process generated by continuous-time trading. Even worse, choosing a rough path without care might lead to an anticipating integral, corresponding e.g.~to Stratonovich integration, thus introducing undesired arbitrage when used as a capital process.

We overcome these issues by introducing the so-called Property (RIE) for a c{\`a}dl{\`a}g path $S \colon [0,T] \to \R^d$ and a sequence $(\mathcal{P}^n)_{n \in \N}$ of partitions of the interval $[0,T]$. This property is very much in the same spirit as F{\"o}llmer's assumption of quadratic variation along a sequence of partitions. Indeed, we show that Property (RIE) implies the existence of quadratic variation in the sense of F{\"o}llmer, and even the existence of a c{\`a}dl{\`a}g rough path $\mathbf{S}$ above $S$, which, loosely speaking, corresponds to an ``It\^o'' rough path in a probabilistic setting. Assuming Property (RIE), we prove that the corresponding rough integrals exist as limits of left-point Riemann sums along the sequence of partitions $(\mathcal{P}^n)_{n \in \N}$. This result restores the canonical financial interpretation for rough integration, and links it to F{\"o}llmer integration for c{\`a}dl{\`a}g paths. Property (RIE) was previously introduced by Perkowski and Pr\"omel \cite{Perkowski2016} for continuous paths, though we emphasize that the present more general c{\`a}dl{\`a}g setting requires quite different techniques compared to the continuous setting of \cite{Perkowski2016}.

Given the aforementioned results, a c{\`a}dl{\`a}g path which satisfies Property (RIE) permits the path-by-path existence of rough integrals with their desired financial interpretation, and moreover maintains access to their powerful stability results which ensure that the integral is a continuous operator. This appears to be a significant advantage compared to the classical notions of pathwise stochastic integration in \cite{Bichteler1981,Willinger1989,Karandikar1995,Nutz2012}, which do not come with such stability estimates. In particular, the pathwise stability results of rough path theory allow one to prove a model-free version of the so-called fundamental theorem of derivative trading---see Armstrong, Bellani, Brigo and Cass \cite{Armstrong2018}---and may be of interest when investigating discretization errors of continuous-time trading in model-free finance; see Riga \cite{Riga2016}. Furthermore, in contrast to F{\"o}llmer integration, rough integration allows one to consider general functionally generated integrands $g(S_t)$, where $g$ is a general (sufficiently smooth) function $g \colon \R^d \to \R^d$, and \emph{not} necessarily the gradient of another vector field $f \colon \R^d \to \R$. For instance, model-free portfolio theory constitutes a research direction in which it is beneficial to consider non-gradient trading strategies; see Allan, Cuchiero, Liu and Pr\"omel \cite{Allan2023}. Even more generally, rough integration allows one to treat path-dependent functionally generated options in the sense of Dupire \cite{Dupire2019}, and pathwise versions of Cover's universal portfolio, as discussed in Section~\ref{sec:trading strategies}.

Of course, it remains to verify that Property (RIE) is a reasonable modelling assumption in mathematical finance, in the sense that it is fulfilled almost surely by sample paths of the commonly used probabilistic models of financial markets. Since it seems natural that continuous-time trading takes place when the underlying price process fluctuates, we employ sequences of partitions based on such a ``space discretization''. For such sequences of partitions, we show that the sample paths of c{\`a}dl{\`a}g semimartingales almost surely satisfy Property (RIE). This result is then extended to so-called Young semimartingales, which are stochastic possesses given by the sum of a c{\`a}dl{\`a}g local martingale and an adapted c{\`a}dl{\`a}g process of finite $q$-variation for some $q < 2$. Finally, we prove that Property (RIE) is satisfied by typical price paths in the sense of Vovk~\cite{Vovk2008}, which correspond to a model-free version of ``no unbounded profit with bounded risk''.

\medskip

\noindent \textbf{Organization of the paper:} In Section~\ref{sec:rough path integration} we introduce Property~\textup{(RIE)} and verify the properties of the associated rough integration as described above. In Section~\ref{sec:trading strategies} we exhibit functionally generated trading strategies and generalizations thereof which provide valid integrands for rough integration. In Section~\ref{sec:stochastic processes} we prove that (Young) semimartingales and typical price paths satisfy Property~\textup{(RIE)}.

\medskip

\noindent\textbf{Acknowledgments:} A.~L.~Allan gratefully acknowledges financial support by the Swiss National Science Foundation via Project 200021\textunderscore 184647. C.~Liu gratefully acknowledges support from the Early Postdoc.Mobility Fellowship (No.~P2EZP2\textunderscore 188068) of the Swiss National Science Foundation, and from the G.~H.~Hardy Junior Research Fellowship in Mathematics awarded by New College, Oxford.

\section{Rough integration under Property~(RIE)}\label{sec:rough path integration}

In this section we develop pathwise integration under Property~\textup{(RIE)}. We set up the essential ingredients from rough path theory in Section~\ref{subsec:cadlag rough path} and show in Section~\ref{subsec:integration under RIE} that paths satisfying~\textup{(RIE)} serve as suitable integrators in mathematical finance. Finally, in Section~\ref{subsec:Follmer integration} we connect Property~\textup{(RIE)} with the existence of quadratic variation in the sense of F{\"o}llmer.

\subsection{Basic notation}

Let $(\R^d,|\hspace{1pt}\cdot\hspace{1pt}|)$ denote standard Euclidean space, and let $D([0,T];\R^d)$ denote the space of all c{\`a}dl{\`a}g (i.e.~right-continuous with left-limits) functions from $[0,T] \to \R^d$. A partition~$\mathcal{P}=\mathcal{P}([s,t])$ of the interval~$[s,t]$ is a set of essentially disjoint intervals covering $[s,t]$, i.e.~$\mathcal{P} = \{s = t_0 < t_1 < \cdots < t_{N} = t\}$ for some $N \in \N$. The mesh size of a partition $\mathcal{P}$ is given by $|\mathcal{P}| := \max\{|t_{i+1} - t_i| : i=0,\dots,N-1\}$, and, for a partition $\mathcal{P}$ of the interval $[s,t]$ and a subinterval $[u,v] \subset [s,t]$, we write $\mathcal{P}([u,v]) := (\cP \cup \{u,v\}) \cap [u,v] = \{r \in \cP \cup \{u,v\} : u \leq r \leq v\}$, for the restriction of the partition~$\mathcal{P}$ to the interval~$[u,v]$. A sequence $(\mathcal{P}^n)_{n\in \N}$ of partitions is called \emph{nested} if $\mathcal{P}^n \subset \mathcal{P}^{n+1}$ for all $n\in \N$.

Setting $\Delta_{[0,T]} := \{(s,t) \in [0,T]^2 : s \leq t\}$, a \emph{control function} is defined as a function $w \colon \Delta_{[0,T]} \to [0, \infty)$ which is superadditive, in the sense that $w(s,u) + w(u,t) \leq w(s,t)$ for all $0 \leq s \leq u \leq t \leq T$.

\smallskip

Throughout this section we fix a finite time interval $[0,T]$ and the dimension $d \in \N$. We also adopt the convention that, given a path $A$ defined on $[0,T]$, we will write $A_{s,t} := A_t - A_s$ for the increment of $A$ over the interval $[s,t]$. Note however that whenever $A$ is a two-parameter function defined on $\Delta_{[0,T]}$, then the notation $A_{s,t}$ will simply denote the value of $A$ evaluated at the pair of times $(s,t) \in \Delta_{[0,T]}$.

\smallskip

If $A$ denotes either a path from $[0,T] \to E$ or a two-parameter function from $\Delta_{[0,T]} \to E$ for some normed vector space $E$, then, for any $p \in [1,\infty)$, the $p$-variation of $A$ over the interval $[s,t]$ is defined by
\begin{equation*}
  \|A\|_{p,[s,t]} := \bigg(\sup_{\mathcal{P}([s,t])} \sum_{[u,v] \in \mathcal{P}([s,t])} |A_{u,v}|^p\bigg)^{\frac{1}{p}}
\end{equation*}
where the supremum is taken over all partitions $\mathcal{P}([s,t])$ of the interval $[s,t] \subseteq [0,T]$, and in the case when $A$ is a path we write $A_{u,v} := A_v - A_u$. If $\|A\|_{p,[0,T]} < \infty$ then $A$ is said to have finite $p$-variation.

We write $D^p = D^p([0,T];E)$ for the space of all c\`adl\`ag paths $A \colon [0,T] \to E$ of finite $p$-variation, and we similarly write $D^p_2 = D^p_2(\Delta_{[0,T]};E)$ for the space of two-parameter functions $A \colon \Delta_{[0,T]} \to E$ of finite $p$-variation which are such that the maps $s \mapsto A_{s,t}$ for fixed $t$, and $t \mapsto A_{s,t}$ for fixed $s$, are both c{\`a}dl{\`a}g. Note that $A$ having finite $p$-variation is equivalent to the existence of a control function $w$ such that $|A_{s,t}|^p \leq w(s,t)$ for all $(s,t) \in \Delta_{[0,T]}$. (For instance, one may take $w(s,t) = \|A\|_{p,[s,t]}^p$.)

\subsection{C{\`a}dl{\`a}g rough path theory and Property~(RIE)}\label{subsec:cadlag rough path}

While rough path theory has by now been well studied in the case of continuous paths, as exhibited in a number of books, notably Friz and Hairer \cite{FrizHairer2020}, its extension to c{\`a}dl{\`a}g paths appeared only recently, starting with Friz and Shekhar \cite{Friz2017}. In this section we mainly rely on results regarding forward integration with respect to c{\`a}dl{\`a}g rough paths as presented in Friz and Zhang \cite{Friz2018}.

\smallskip

In the following we fix $p \in (2,3)$ and $q \geq p$ such that
\begin{equation*}
  \frac{2}{p} + \frac{1}{q} > 1,
\end{equation*}
and define $r > 1$ by the relation
\begin{equation*}
  \frac{1}{r} = \frac{1}{p} + \frac{1}{q}.
\end{equation*}
This means in particular that $1 < p/2 \leq r < p \leq q < \infty$.

\smallskip

Throughout the paper, we will use the symbol $\lesssim$ to denote inequality up to a multiplicative constant which depends only on the numbers $p, q$ and $r$ as chosen above.

\smallskip

We begin by recalling the definition of a c{\`a}dl{\`a}g rough path, as well as the corresponding notion of controlled paths. In the following we will write $A \otimes B$ for the tensor product of two vectors $A, B \in \R^d$, i.e.~the $d \times d$-matrix with $(i,j)$-component given by $[A \otimes B]^{ij} = A^i B^j$ for $1 \leq i, j \leq d$.

\begin{definition}\label{def:rough path}
  We say that a triplet $\bX = (X,Z,\X)$ is a \emph{(c{\`a}dl{\`a}g) $p$-rough path} (over $\R^d$) if $X \in D^p([0,T];\R^d)$, $Z \in D^p([0,T];\R^d)$ and $\X \in D_2^{p/2}(\Delta_{[0,T]};\R^{d \times d})$, and if Chen's relation:
  \begin{equation}\label{eq:Chens relation}
    \X_{s,t} = \X_{s,u} + \X_{u,t} + Z_{s,u} \otimes X_{u,t} 
  \end{equation}
  holds for all times $0 \leq s \leq u \leq t \leq T$. We denote the space of c{\`a}dl{\`a}g rough paths by $\rp$.
\end{definition}

The unfamiliar reader is encouraged to check that, given c{\`a}dl{\`a}g paths $X$ and $Z$ of bounded variation, setting $\X_{s,t} = \int_s^t Z_{s,u} \otimes \d X_u \equiv \int_s^t Z_u \otimes \d X_u - Z_s \otimes X_{s,t}$ for $(s,t) \in \Delta_{[0,T]}$, with the integral defined as a limit of left-point Riemann sums, gives a $p$-rough path. Although the integral $\int_s^t Z_{s,u} \otimes \d X_u$ is not in general well-defined when $X$ and $Z$ are not of bounded variation, given a rough path $(X,Z,\X)$, we may think of $\X$ as postulating a ``candidate'' for the value of such integrals.

\begin{remark}
  The definition of rough paths we have introduced above looks slightly different to the standard definition, in which one takes $X = Z$. Our definition is slightly more general, but the corresponding theory works in exactly the same way, and turns out to be more convenient in the context of Property~\textup{(RIE)} as we will see later.

  More precisely, later the matrix $\X_{s,t}$ will for us represent the (a priori ill-defined) `integral' $\int_s^t S_{s,u} \otimes \d S_u$, which will be defined as the limit as $n \to \infty$ of the Riemann sums $(\int_s^t S^n_{s,u} \otimes \d S_u)_{n \in \N}$ appearing in Property~\textup{(RIE)} below. In the continuous (i.e.~without jumps) setting of Perkowski and Pr\"omel \cite{Perkowski2016}, a linear interpolation is used to provide a continuous approximation of $S^n$, leading to a Stratonovich type integral in the limit, which is subsequently converted back into an It\^o type integral. Thanks to the recently developed theory of c{\`a}dl{\`a}g rough paths, here we can use a more direct argument which avoids this detour. This means working directly with the integral $\int_s^t S^n_{s,u} \otimes \d S_u$, which corresponds to taking $X = S$ and $Z = S^n$ in Definition~\ref{def:rough path}, thus requiring $X \neq Z$.
\end{remark}

For two rough paths, $\bX = (X,Z,\X)$ and $\tbX = (\tX,\tZ,\tbbX)$, we use the seminorm
\begin{equation*}
  \|\bX\|_{p,[s,t]} := \|X\|_{p,[s,t]} + \|Z\|_{p,[s,t]} + \|\X\|_{\frac{p}{2},[s,t]},
\end{equation*}
and the pseudometric
\begin{equation*}
  \|\bX;\tbX\|_{p,[s,t]} := \|X - \tX\|_{p,[s,t]} + \|Z - \tZ\|_{p,[s,t]} + \|\X - \tbbX\|_{\frac{p}{2},[s,t]},
\end{equation*}
for $[s,t] \subseteq [0,T]$.

In the following we write $\cL(\R^d;\R^d)$ for the space of linear maps from $\R^d \to \R^d$.

\begin{definition}\label{def: controlled path}
  Let $Z \in D^p([0,T];\R^d)$. We say that a pair $(F,F')$ is a \emph{controlled path} (with respect to $Z$), if $F \in D^{p}([0,T];\R^d)$, $F' \in D^q([0,T];\cL(\R^d;\R^d))$ and $R^F \in D_2^r(\Delta_{[0,T]};\R^d)$, where the remainder $R^F$ is defined implicitly by the relation
  \begin{equation*}
    F_{s,t} = F'_s Z_{s,t} + R^F_{s,t}, \qquad (s,t) \in \Delta_{[0,T]}.
  \end{equation*}
  We refer to $F'$ as the \emph{Gubinelli derivative} of $F$ (with respect to $Z$), and denote the space of such controlled paths by $\crpZ$.
\end{definition}

Given a path $Z \in D^p([0,T];\R^d)$, the space of controlled paths $\crpZ$ becomes a Banach space when equipped with the norm $(F,F') \mapsto |F_0| + \|F,F'\|_{\crpZ,[0,T]}$, where
$$
  \|F,F'\|_{\crpZ,[0,T]} \hspace{1pt} := |F'_0| + \|F'\|_{q,[0,T]} + \|R^F\|_{r,[0,T]}.
$$

With the concepts of rough paths and controlled paths at hand we are ready to introduce rough integration. The following result is a straightforward extension of \cite[Lemma~2.6]{Allan2023}, and its proof follows almost verbatim.

\begin{proposition}\label{prop: general rough integral exists}
  Let $\bX = (X,Z,\X) \in \rp$ be a c\`adl\`ag rough path, and let $(F,F') \in \crpZ$ and $(G,G') \in \crpX$ be controlled paths with respect to $Z$ and $X$, respectively, with remainders $R^F$ and $R^G$. Then, for each $t\in [0,T]$, the limit
  \begin{equation}\label{eq:general rough integral defn}
    \int_0^t F_u \dd G_u := \lim_{|\mathcal{P}| \to 0} \sum_{[u,v] \in \mathcal{P}} F_u G_{u,v} + F'_u G'_u \X_{u,v}
  \end{equation}
  exists along every sequence of partitions $\mathcal{P}$ of the interval $[0,t]$ with mesh size $|\mathcal{P}|$ tending to zero. We call this limit the \emph{rough integral} of $(F,F')$ against $(G,G')$ (relative to the rough path $\bX$), which moreover comes with the estimate
  \begin{align}\label{eq:est int of controlled paths}
    \begin{split}
    &\bigg|\int_s^t F_u \dd G_u - F_s G_{s,t} - F'_s G'_s \X_{s,t}\bigg|\\
    &\quad \leq C\Big(\|F'\|_\infty (\|G'\|_{q,[s,t]}^q + \|Z\|_{p,[s,t]}^p)^{\frac{1}{r}} \|X\|_{p,[s,t]} + \|F\|_{p,[s,t]} \|R^G\|_{r,[s,t]}\\
    &\quad\quad\quad\quad+ \|R^F\|_{r,[s,t]} \|G'\|_\infty \|X\|_{p,[s,t]} + \|F'G'\|_{q,[s,t]} \|\X\|_{\frac{p}{2},[s,t]}\Big),
    \end{split}
  \end{align}
  for all $(s,t) \in \Delta_{[0,T]}$ where the constant $C$ depends only on $p, q$ and $r$.
\end{proposition}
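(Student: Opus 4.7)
The plan is to invoke a c{\`a}dl{\`a}g sewing lemma (as available in Friz--Zhang \cite{Friz2018}) applied to the two-parameter germ
$$\Xi_{s,t} := F_s G_{s,t} + F'_s G'_s \X_{s,t}, \qquad (s,t) \in \Delta_{[0,T]},$$
which represents the natural compensated Riemann contribution on $[s,t]$. Once a suitable bound on the three-point defect $\delta \Xi_{s,u,t} := \Xi_{s,t} - \Xi_{s,u} - \Xi_{u,t}$ is established, the sewing lemma will produce a unique two-parameter function $I_{s,t}$ satisfying $|I_{s,t} - \Xi_{s,t}| \lesssim w(s,t)^\theta$ for a suitable control $w$ and some $\theta > 1$, delivered as the limit of left-point Riemann sums along any partition sequence with mesh tending to zero. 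Setting $\int_0^t F_u \dd G_u := I_{0,t}$ then simultaneously furnishes the limit \eqref{eq:general rough integral defn} and, after unpacking $w$, the quantitative estimate \eqref{eq:est int of controlled paths}.

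The defect computation is where the real work lies. Using additivity of $G$, Chen's relation \eqref{eq:Chens relation}, and the controlled-path decompositions $F_{s,u} = F'_s Z_{s,u} + R^F_{s,u}$ and $G_{u,t} = G'_u X_{u,t} + R^G_{u,t}$, one first obtains
$$\delta \Xi_{s,u,t} = -F_{s,u} G_{u,t} + F'_s G'_s (Z_{s,u} \otimes X_{u,t}) - (F'G')_{s,u} \X_{u,t}.$$
Expanding $F_{s,u} G_{u,t}$ into its four cross-terms, the leading bilinear contribution cancels against $F'_s G'_s (Z_{s,u} \otimes X_{u,t})$ up to the Gubinelli-derivative increment $G'_{s,u} := G'_u - G'_s$, leaving a sum of five terms of schematic form
$$F'_s Z_{s,u} G'_{s,u} X_{u,t}, \quad F'_s Z_{s,u} R^G_{u,t}, \quad R^F_{s,u} G'_u X_{u,t}, \quad R^F_{s,u} R^G_{u,t}, \quad (F'G')_{s,u} \X_{u,t}.$$
Each factors into increments of finite $p$-, $q$-, $r$- or $p/2$-variation. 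Using the standing assumption $\tfrac{2}{p} + \tfrac{1}{q} > 1$ together with the identity $\tfrac{1}{r} = \tfrac{1}{p} + \tfrac{1}{q}$, every resulting product corresponds to a superadditive control raised to a power strictly above $1$: the first three terms contribute exponent $\tfrac{2}{p} + \tfrac{1}{q}$, the fourth contributes $\tfrac{2}{r}$, and the fifth contributes $\tfrac{1}{q} + \tfrac{2}{p}$. All five defect contributions therefore pass the sewing threshold.

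Applying the c{\`a}dl{\`a}g sewing lemma then yields $|I_{s,t} - \Xi_{s,t}|$ dominated by a sum mirroring these five terms; grouping them according to their shared factors---using the controlled-path bound $\|F\|_{p,[s,t]} \lesssim \|F'\|_\infty \|Z\|_{p,[s,t]} + \|R^F\|_{r,[s,t]}$ to combine the second and fourth contributions, together with a Young-type inequality applied to the control $\|G'\|_{q}^q + \|Z\|_{p}^p$ to combine the first and second---reproduces exactly the four summands appearing on the right-hand side of \eqref{eq:est int of controlled paths}. The main technical obstacle is the algebraic bookkeeping in the defect formula, where one must track tensor contractions so that the Chen correction annihilates precisely the principal cross-term of $F_{s,u} G_{u,t}$; the only genuinely new feature compared to the standard rough integral is that $F$ and $G$ live above different reference paths $Z$ and $X$, which forces Chen's relation to be used with $Z$ on the left and $X$ on the right. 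Beyond this, the proof is a direct transcription of \cite[Lemma~2.6]{Allan2023}, the remaining care being to invoke the \emph{c{\`a}dl{\`a}g} version of the sewing lemma so that the output really is the limit of left-point Riemann sums, without any Stratonovich-type detour.
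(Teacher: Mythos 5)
Your proposal is correct and follows essentially the same route as the paper, which simply defers to \cite[Lemma~2.6]{Allan2023} and the c\`adl\`ag sewing machinery of \cite{Friz2018}. The defect formula, the use of Chen's relation with $Z$ on the left and $X$ on the right, the five term expansion and its recombination into the four summands of \eqref{eq:est int of controlled paths}, and the exponent counts are all correct; the only small slip is the phrase about ``combining the first and second'' via Young's inequality, which should read that Young's inequality is applied inside term~1 to control the product $|Z_{s,u}|\,|G'_{s,u}|$ by the $r$-th root of the superadditive control $\|Z\|_p^p + \|G'\|_q^q$, while the recombination into term~B of the estimate is precisely $F'_s Z_{s,u} R^G_{u,t} + R^F_{s,u} R^G_{u,t} = F_{s,u} R^G_{u,t}$ (your terms 2 and 4), which is indeed what you invoke with the bound $\|F\|_p \lesssim \|F'\|_\infty \|Z\|_p + \|R^F\|_r$.
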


For us, the product of vectors $F_u G_{u,v}$ appearing in \eqref{eq:general rough integral defn} will usually be interpreted as the Euclidean inner product, but in general this product may be interpreted as a matrix of any desired shape and size, consisting of linear combinations of products of the components of the two vectors, and the product $F'_u G'_u \X_{u,v}$ will be a matrix with the same shape.

\begin{remark}\label{remark classical rough integral as special case}
  In the special case when $G = X$ (so that $G'$ is the identity map and $R^G = 0$), the integral defined in Proposition~\ref{prop: general rough integral exists} reduces to the more classical notion of the rough integral of the controlled path $(F,F')$ against the rough path $\bX$, given by
  \begin{equation*}
    \int_0^t F_u \dd \bX_u = \lim_{|\mathcal{P}| \to 0} \sum_{[u,v] \in \mathcal{P}} F_u X_{u,v} + F'_u \X_{u,v}.
  \end{equation*}
\end{remark}

\begin{remark}\label{rmk: rough int is controlled path}
  It follows from the estimate in \eqref{eq:est int of controlled paths}, combined with the relation $G_{s,t} = G'_s X_{s,t} + R^G_{s,t}$, that the rough integral $\int_0^\cdot F_u \dd G_u$ is itself a controlled path with respect to $X$, with Gubinelli derivative $FG'$, so that $(\int_0^\cdot F_u \dd G_u,FG') \in \mathcal{V}^{q,r}_X$.
\end{remark}

Notice that the construction of the rough integral in \eqref{eq:general rough integral defn} is based on so-called compensated Riemann sums $\sum_{[u,v] \in \mathcal{P}} F_u G_{u,v} + F'_u G'_u \X_{u,v}$ instead of classical left-point Riemann sums $\sum_{[u,v] \in \mathcal{P}} F_u G_{u,v}$. While the classical Riemann sums come with a natural interpretation as capital gain processes in the context of mathematical finance, the interpretation of compensated Riemann sums is by no means obvious. However, one advantage of rough integration is that it provides rather powerful stability estimates, for instance as presented in the next proposition.

\begin{proposition}\label{prop general rough int is cts}
  Let $\bX = (X,Z,\X), \tbX = (\tX,\tZ,\tbbX) \in \rp$ be c\`adl\`ag rough paths, and let $(F,F') \in \crpZ$ and $(\tF,\tF') \in \crptZ$ be controlled paths with remainders $R^F$ and $R^{\tF}$ respectively.
  \begin{enumerate}[(i)]
    \item  We have the estimate
    \begin{align*}
      &\bigg\|\int_0^\cdot F_u \dd \bX_u - \int_0^\cdot \tF_u \dd \tbX_u\bigg\|_{p;[0,T]}\\
      &\hspace{2pt}\leq C \Big((|\tF_0| + \|\tF,\tF'\|_{\crptZ,[0,T]})(1 + \|X\|_{p;[0,T]} + \|\tZ\|_{p;[0,T]})\|\bX;\tbX\|_{p;[0,T]}\\
      &\hspace{20pt} + (|F_0 - \tF_0| + |F'_0 - \tF'_0| + \|F' - \tF'\|_{q;[0,T]} + \|R^F - R^{\tF}\|_{r;[0,T]})(1 + \|Z\|_{p;[0,T]})\|\bX\|_{p;[0,T]}\Big),
    \end{align*}
    where the constant $C$ depends only on $p, q$ and $r$.
    \item Let $(G,G') \in \crpX$ and $(\tG,\tG') \in \crptX$ also be controlled paths with remainders $R^G$ and $R^{\tG}$ respectively. Let $M > 0$ be a constant such that
    \begin{align*}
      \|\bX\|_{p,[0,T]}, \|\tbX\|_{p,[0,T]} &\leq M,\\
      |F_0|, \|F,F'\|_{\crpZ,[0,T]}, \|G,G'\|_{\crpX,[0,T]}, |\tF_0|, \|\tF,\tF'\|_{\crptZ,[0,T]}, \|\tG,\tG'\|_{\crptX,[0,T]} &\leq M.
    \end{align*}
    We then have the estimate
    \begin{align*}
      \begin{split}
      &\bigg\|\int_0^\cdot F_u \dd G_u - \int_0^\cdot \tF_u \dd \tG_u\bigg\|_{p,[0,T]}\\
      &\quad\leq C \Big(|F_0 - \tF_0| + |F'_0 - \tF'_0| + \|F' - \tF'\|_{q,[0,T]} + \|R^F - R^{\tF}\|_{r,[0,T]}\\
      &\hspace{43pt} + |G'_0 - \tG'_0| + \|G' - \tG'\|_{q,[0,T]} + \|R^G - R^{\tG}\|_{r,[0,T]} + \|\bX;\tbX\|_{p,[0,T]}\Big)
      \end{split}
    \end{align*}
    where the new constant $C$ depends on $p, q, r$ and $M$.
  \end{enumerate}
\end{proposition}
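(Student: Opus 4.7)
My plan is to follow the standard rough-path stability argument, applying the sewing principle to the \emph{difference} of the two germs that define the respective integrals. For part~(i), I set
\begin{equation*}
\Xi_{s,t} := F_s X_{s,t} + F'_s \X_{s,t}, \qquad \tilde{\Xi}_{s,t} := \tF_s \tX_{s,t} + \tF'_s \tbbX_{s,t},
\end{equation*}
and recall, from the sewing-based proof of Proposition~\ref{prop: general rough integral exists} specialized as in Remark~\ref{remark classical rough integral as special case}, that the rough integrals $\int_0^\cdot F \dd\bX$ and $\int_0^\cdot \tF \dd\tbX$ are precisely the c\`adl\`ag additive functionals obtained by sewing $\Xi$ and $\tilde{\Xi}$. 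The stability question therefore splits into controlling the $p$-variation of $\Xi - \tilde{\Xi}$ directly, and controlling the Chen defect of $\Xi - \tilde{\Xi}$, which via sewing yields a $p$-variation bound on the remainder $\int F\dd\bX - \int \tF \dd\tbX - (\Xi - \tilde{\Xi})$.

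A direct computation using the controlled-path relation $F_{s,t} = F'_s Z_{s,t} + R^F_{s,t}$ and Chen's relation~\eqref{eq:Chens relation} yields
\begin{equation*}
\Xi_{s,t} - \Xi_{s,u} - \Xi_{u,t} = -R^F_{s,u}\, X_{u,t} - F'_{s,u}\, \X_{u,t},
\end{equation*}
with the identical identity for $\tilde{\Xi}$. Writing the difference of these defects via the telescoping identity $ab - \tilde a \tilde b = (a - \tilde a) b + \tilde a (b - \tilde b)$ produces four summands, each pairing one difference factor (drawn from $R^F - R^{\tF}$, $X - \tX$, $F' - \tF'$, $\X - \tbbX$) with one control factor from the tilded or untilded data. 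The exponent hypothesis $\frac{2}{p} + \frac{1}{q} > 1$ forces both $\frac{1}{r} + \frac{1}{p} > 1$ and $\frac{1}{q} + \frac{2}{p} > 1$, so the sewing lemma converts these local bounds into the required $p$-variation estimate on the sewing remainder. The complementary bound on $\|\Xi - \tilde{\Xi}\|_{p,[0,T]}$ is obtained by the same product-splitting applied to $\Xi$ and $\tilde{\Xi}$ themselves, combined with the elementary bounds $\|F\|_\infty \leq |F_0| + \|F\|_{p,[0,T]}$ and $\|F\|_{p,[0,T]} \lesssim \|F,F'\|_{\crpZ,[0,T]}(1 + \|Z\|_{p,[0,T]})$ (and their tilded analogues). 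Collecting terms, every resulting contribution can be rearranged either as a difference of controlled-path data multiplied by $(1 + \|Z\|_{p,[0,T]})\|\bX\|_{p,[0,T]}$, or as $(|\tF_0| + \|\tF,\tF'\|_{\crptZ,[0,T]})(1 + \|X\|_{p,[0,T]} + \|\tZ\|_{p,[0,T]})$ multiplied by $\|\bX;\tbX\|_{p,[0,T]}$, matching the form asserted in~(i).

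For part~(ii) the scheme is structurally identical but applied to the enlarged germ $\Xi_{s,t} := F_s G_{s,t} + F'_s G'_s \X_{s,t}$, whose Chen defect is computed by combining both controlled-path expansions $F_{s,t} = F'_s Z_{s,t} + R^F_{s,t}$ and $G_{s,t} = G'_s X_{s,t} + R^G_{s,t}$ with Chen's relation, producing a sum of products involving $R^F$, $F'$, $R^G$ and $G'$ of exactly the type appearing in the proof of Proposition~\ref{prop: general rough integral exists}. Because of the uniform bound $M$, every non-difference prefactor produced by the same $ab - \tilde a\tilde b$ telescoping is dominated by a polynomial in $M$ and is absorbed into the constant $C = C(p,q,r,M)$, leaving only a sum of differences of the structural data, which is precisely the right-hand side of~(ii). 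The main obstacle in the whole argument is the algebraic bookkeeping for~(ii): one must check that every summand produced by the product splittings carries \emph{exactly one} difference factor, and that each remaining pair of $p$-, $q$-, $r$- or $p/2$-variation exponents always satisfies the sewing summability condition. Once this is verified, combining the resulting sewing estimate with the direct $p$-variation bound on $\Xi - \tilde{\Xi}$ yields~(ii) by exactly the same rearrangement as in~(i).
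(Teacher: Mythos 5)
Your proposal is correct and follows essentially the same strategy as the paper: the paper's proof for part~(ii) simply invokes Friz and Zhang's Lemma~3.4, which is itself the sewing-based stability estimate you re-derive by applying the sewing lemma to the germ difference $\Xi - \tilde{\Xi}$ and telescoping via $ab - \tilde a\tilde b = (a-\tilde a)b + \tilde a(b-\tilde b)$; the paper then assembles the conclusion through the controlled-path structure of the rough integral (Remark~\ref{rmk: rough int is controlled path}), which corresponds to the ``direct $p$-variation bound on $\Xi - \tilde{\Xi}$'' plus the sewing remainder in your framing. Your germ-defect identity for part~(i) is computed correctly, the exponent checks are right, and the acknowledged ``bookkeeping'' step for part~(ii) is exactly what the citation is doing implicitly, so the proposal amounts to a self-contained unpacking of the referenced argument rather than a genuinely different route.
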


\begin{proof}
  We present here only the proof of part~(ii), since the proof of part~(i) follows almost verbatim. Here, the multiplicative constant implied by the symbol $\lesssim$ will be allowed to depend on the numbers $p, q$ and $r$ as usual, and additionally on the constant $M$. Following the proof of \cite[Lemma~3.4]{Friz2018}, in our more general setting one deduces the estimates
  \begin{equation}\label{eq:est Y tY}
    \|F - \tF\|_{p,[0,T]} \lesssim |F'_0 - \tF'_0| + \|F' - \tF'\|_{q,[0,T]} + \|R^F - R^{\tF}\|_{r,[0,T]} + \|Z - \tZ\|_{p,[0,T]},
  \end{equation}
  and
  \begin{align}\label{eq:est RintYX RinttYtX}
    \begin{split}
    &\|R^{\int_0^\cdot F_u \dd G_u} - R^{\int_0^\cdot \tF_u \dd \tG_u}\|_{r,[0,T]}\\
    &\quad\lesssim |F_0 - \tF_0| + \|F - \tF\|_{p,[0,T]} + |F'_0 - \tF'_0| + \|F' - \tF'\|_{q,[0,T]} + \|R^F - R^{\tF}\|_{r,[0,T]}\\
    &\hspace{50pt} + |G'_0 - \tG'_0| + \|G' - \tG'\|_{q,[0,T]} + \|R^G - R^{\tG}\|_{r,[0,T]} + \|\bX;\tbX\|_{p,[0,T]}.
    \end{split}
  \end{align} 
  Recalling Remark~\ref{rmk: rough int is controlled path}, we find, using the controlled path structure of the rough integrals, that
  \begin{align}\label{eq:est intYX inttYtX}
    \begin{split}
      \bigg\|\int_0^\cdot F_u \dd G_u - \int_0^\cdot \tF_u \dd \tG_u\bigg\|_{p,[0,T]} \lesssim |&F_0 - \tF_0| + \|F - \tF\|_{p,[0,T]} + |G'_0 - \tG'_0| + \|G' - \tG'\|_{q,[0,T]}\\
      &+ \|X - \tX\|_{p,[0,T]} + \|R^{\int_0^\cdot F_u \dd G_u} - R^{\int_0^\cdot \tF_u \dd \tG_u}\|_{r,[0,T]}.
    \end{split}
  \end{align}
  The result then follows upon substituting the estimates \eqref{eq:est Y tY} and \eqref{eq:est RintYX RinttYtX} into \eqref{eq:est intYX inttYtX}.
\end{proof}

In the spirit of F{\"o}llmer's assumption of quadratic variation along a sequence of partitions~\cite{Follmer1981}, we introduce the following property.

\begin{property}[\textbf{RIE}]
  Let $p\in (2,3)$ and let $\mathcal{P}^n = \{0 = t^n_0 < t^n_1 < \cdots < t^n_{N_n} = T\}$, $n \in \N$, be a sequence of nested partitions of the interval $[0,T]$ such that $|\mathcal{P}^n| \to 0$ as $n \to \infty$. For $S \in D([0,T];\R^d)$, we define $S^n \colon [0,T] \to \R^d$ by
  \begin{equation*}
    S^n_t = S_T \1_{\{T\}}(t) + \sum_{k=0}^{N_n - 1} S_{t^n_k} \1_{[t^n_k,t^n_{k+1})}(t), \qquad t \in [0,T],
  \end{equation*}
  for each $n\in \N$. We assume that:
  \begin{itemize}
    \item the sequence of paths $(S^n)_{n \in \N}$ converges uniformly to $S$ as $n \to \infty$,
    \item the Riemann sums $\int_0^t S^n_u \otimes \d S_u := \sum_{k=0}^{N_n-1} S_{t^n_k} \otimes S_{t^n_k \wedge t,t^n_{k+1} \wedge t}$ converge uniformly as $n \to \infty$ to a limit, which we denote by $\int_0^t S_u \otimes \d S_u$, $t \in [0,T]$,
    \item and that there exists a control function $w$ such that
\begin{equation}\label{eq:RIE sups}
\sup_{(s,t) \in \Delta_{[0,T]}} \frac{|S_{s,t}|^p}{w(s,t)} + \sup_{n \in \N} \sup_{0 \leq k < \ell \leq N_n} \frac{|\int_{t^n_k}^{t^n_\ell} S^n_u \otimes \d S_u - S_{t^n_k}\otimes S_{t^n_k,t^n_\ell}|^{\frac{p}{2}}}{w(t^n_k,t^n_\ell)} \leq 1.
\end{equation}
  \end{itemize}
\end{property}

In \eqref{eq:RIE sups}, and hereafter, we adopt the convention that $\frac{0}{0} := 0$.

\begin{definition}\label{def: S satisfies RIE}
  A path $S\in D([0,T];\R^d)$ is said to satisfy \textup{(RIE)} with respect to $p$ and $(\mathcal{P}^n)_{n\in \N}$, if $p$, $(\mathcal{P}^n)_{n \in \N}$ and $S$ together satisfy Property~\textup{(RIE)}.
\end{definition}

The name ``\textup{RIE}'' is an abbreviation for ``Riemann'', as we assume the convergence of the Riemann sums $\int S^n_u \otimes \d S_u$, instead of the discrete quadratic variations as in \cite{Follmer1981}. Indeed, Property~\textup{(RIE)} is a stronger assumption than the existence of quadratic variation in the sense of F{\"o}llmer, and is even enough to allow us to lift $S$ in a canonical way to a rough path---see Lemma~\ref{lem: SSA is rough path} below---giving us access to the powerful stability results of rough path theory, such as those exhibited in Proposition~\ref{prop general rough int is cts}. Moreover, Property~\textup{(RIE)} can be verified for most typical stochastic processes in mathematical finance, as we will see in Section~\ref{sec:stochastic processes}.

\begin{remark}
We highlight that, rather than simply being a property of a path, Property \textup{(RIE)} is a property of a path together with a given sequence of partitions $(\cP^n)_{n \in \N}$. Indeed, such a path will in general not satisfy \textup{(RIE)} with respect to a different sequence of partitions. However, in practice there is often a natural choice for the sequence of partitions; see Remark~\ref{remark: RIE lift is Ito lift}. For clarity, hereafter, whenever we claim that a path satisfies Property \textup{(RIE)}, we will always make explicit the partition with respect to which the path satisfies \textup{(RIE)}, in the sense of Definition~\ref{def: S satisfies RIE}.
\end{remark}

\begin{remark}
  In Proposition~\ref{prop: uniform convergence} below, it is actually shown that it is sufficient in Property~\textup{(RIE)} to assume that the sequence $(S^n)_{n \in \N}$ converges only pointwise to $S$, since the uniformity of this convergence then immediately follows.
\end{remark}

Next we shall verify that Property~(RIE) ensures the existence of a c{\`a}dl{\`a}g rough path. For this purpose, we consider a suitable approximating sequence of the so-called `area process', which is represented by $\mathbb{X}$ in Definition~\ref{def:rough path}.

\begin{lemma}\label{lem: uniform bound on An}
Suppose $S \in D([0,T];\R^d)$ satisfies Property \textup{(RIE)} with respect to $p$ and $(\mathcal{P}^n)_{n \in \N}$ (as in Definition~\ref{def: S satisfies RIE}). If for each $n \in \N$ we define $A^n \colon \Delta_{[0,T]} \to \R^{d \times d}$ by
  \begin{equation}\label{eq:defn An}
    A^n_{s,t} := \int_s^t S^n_{s,u} \otimes \d S_u = \int_s^t S^n_u \otimes \d S_u - S^n_s \otimes S_{s,t}, \qquad (s,t) \in \Delta_{[0,T]},
  \end{equation}
where $\int_s^t S^n_{s,u} \otimes \d S_u $ is defined as in Property \textup{(RIE)}, then there exists a constant $C$, depending only on $p$, such that
  \begin{equation}\label{eq:An p2var bound}
    \|A^n\|_{\frac{p}{2},[0,T]} \leq Cw(0,T)^{\frac{2}{p}} \qquad \text{for every} \quad n \in \N.
  \end{equation}
\end{lemma}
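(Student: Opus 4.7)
The plan is to exploit the fact that $A^n$ vanishes on any subinterval of a single element of $\mathcal{P}^n$, reduce the general case to the estimate furnished by Property \textup{(RIE)} via Chen's relation, and then sum carefully over an arbitrary partition of $[0,T]$.

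First, I would verify from the definition \eqref{eq:defn An} that $A^n$ satisfies Chen's relation
$A^n_{s,t} = A^n_{s,u} + A^n_{u,t} + S^n_{s,u} \otimes S_{u,t}$ for all $s \leq u \leq t$, and that $A^n_{s,t} = 0$ whenever $[s,t] \subseteq [t^n_k, t^n_{k+1}]$ for some $k$. The latter follows by expanding $\int_s^t S^n_u \otimes \d S_u = \int_0^t - \int_0^s$ using the definition in Property \textup{(RIE)}: on the interval $[s,t]$ the defining sum collapses to $S_{t^n_k} \otimes S_{s,t}$, which exactly cancels $S^n_s \otimes S_{s,t} = S_{t^n_k} \otimes S_{s,t}$.

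Given any $(s,t) \in \Delta_{[0,T]}$ with $s \in [t^n_k, t^n_{k+1})$ and $t \in [t^n_\ell, t^n_{\ell+1})$ (taking $\ell = N_n$ if $t = T$), the case $\ell \leq k$ is handled by Step~1, while for $\ell \geq k+1$, applying Chen's relation twice together with the vanishing observation yields
\begin{equation*}
A^n_{s,t} = A^n_{t^n_{k+1},\,t^n_\ell} + S_{t^n_{k+1},\,t^n_\ell} \otimes S_{t^n_\ell,\,t} + S_{t^n_k,\,t^n_{k+1}} \otimes S_{t^n_{k+1},\,t}.
\end{equation*}
Property \textup{(RIE)} controls the first summand by $w(t^n_{k+1}, t^n_\ell)^{2/p}$, while the elementary inequality $ab \leq \tfrac{1}{2}(a^2 + b^2)$ with $a,b$ equal to the appropriate $|S_{\cdot,\cdot}|^{p/2}$ bounds the two cross terms by $\tfrac{1}{2}(w(t^n_{k+1}, t^n_\ell) + w(t^n_\ell, t))$ and $\tfrac{1}{2}(w(t^n_k, t^n_{k+1}) + w(t^n_{k+1}, t))$, respectively.

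The main obstacle is the third term $S_{t^n_k, t^n_{k+1}} \otimes S_{t^n_{k+1}, t}$, whose $w(t^n_k, t^n_{k+1})$ contribution ranges over a $\mathcal{P}^n$-interval that is \emph{not} contained in $[s,t]$, so super-additivity of $w$ cannot be invoked interval-by-interval. To overcome this, for a given partition $\mathcal{P}$ of $[0,T]$ I would sum $|A^n_{u,v}|^{p/2}$ over $[u,v] \in \mathcal{P}$: the contributions of $w(t^n_{k+1}, t^n_\ell)$, $w(t^n_\ell, t)$ and $w(t^n_{k+1}, t)$ involve pairwise disjoint subintervals of the respective $[u,v]$, so each sums to at most $w(0,T)$. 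The delicate term $w(t^n_k, t^n_{k+1})$ is handled by observing that for each fixed index $k$, among all $\mathcal{P}$-intervals $[u,v]$ with $u \in [t^n_k, t^n_{k+1})$ only the one with $v > t^n_{k+1}$ strictly contributes (all others fall in the vanishing case of Step~1); hence each $\mathcal{P}^n$-interval is invoked at most once, and super-additivity again bounds this sum by $w(0,T)$. Combining gives $\sum_{[u,v]\in\mathcal{P}} |A^n_{u,v}|^{p/2} \leq C_p\, w(0,T)$ uniformly in $n$ and $\mathcal{P}$, and raising to the power $2/p$ yields \eqref{eq:An p2var bound}.
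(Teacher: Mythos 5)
Your proof is correct and follows essentially the same strategy as the paper's: decompose $A^n_{s,t}$ via Chen's relation at the $\mathcal{P}^n$-points adjacent to $s$ and $t$, invoke the \textup{(RIE)} bound for the purely $\mathcal{P}^n$-indexed piece, bound the cross terms by Young's inequality, and sum over an arbitrary partition using superadditivity of $w$. The only (cosmetic) differences are that your Chen decomposition groups the cross terms slightly differently from the paper's (the two are algebraically equivalent), and that you handle the delicate term $w(t^n_k,t^n_{k+1})$ --- the one reaching to the left of $s$ --- by the explicit ``at most one nonzero $\mathcal{P}$-interval per $k$'' count, whereas the paper absorbs all terms into the single extended control $w(t^n_{k_0-1},t)$ and then notes that these extended intervals can overlap only pairwise-consecutively, giving the factor $2$; your accounting of this overlap is, if anything, a touch more transparent than the paper's brief remark.
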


\begin{proof}
  Let $n \in \N$ and $(s,t) \in \Delta_{[0,T]}$. If there exists a $k$ such that $t^n_k \leq s < t \leq t^n_{k+1}$ then we simply have that $A^n_{s,t} = S_{t^n_k}\otimes S_{s,t} - S_{t^n_k}\otimes S_{s,t} = 0$. Otherwise, let $k_0$ be the smallest $k$ such that $t^n_k \in (s,t)$, and let $k_1$ be the largest such $k$. It is easy to see that the triplet $(S,S^n,A^n)$ satisfies Chen's relation~\eqref{eq:Chens relation}, from which it follows that
  \begin{equation*}
    A^n_{s,t} = A^n_{s,t^n_{k_0}} + A^n_{t^n_{k_0},t^n_{k_1}} + A^n_{t^n_{k_1},t} + S^n_{s,t^n_{k_0}}\otimes S_{t^n_{k_0},t^n_{k_1}} + S^n_{s,t^n_{k_1}}\otimes S_{t^n_{k_1},t}.
  \end{equation*}
  As we have already observed, we have that $A^n_{s,t^n_{k_0}} = A^n_{t^n_{k_1},t} = 0$. By the inequality~\eqref{eq:RIE sups}, we have
\begin{equation*}
|A^n_{t^n_{k_0},t^n_{k_1}}|^{\frac{p}{2}} \leq w(t^n_{k_0},t^n_{k_1}) \leq w(t^n_{k_0-1},t).
\end{equation*}
  We estimate the remaining terms as
  \begin{align*}
    |S^n_{s,t^n_{k_0}}\otimes S_{t^n_{k_0},t^n_{k_1}}|^{\frac{p}{2}} + |S^n_{s,t^n_{k_1}} \otimes S_{t^n_{k_1},t}|^{\frac{p}{2}} &\lesssim |S^n_{s,t^n_{k_0}}|^p + |S_{t^n_{k_0},t^n_{k_1}}|^p + |S^n_{s,t^n_{k_1}}|^p + |S_{t^n_{k_1},t}|^p\\
    &= |S_{t^n_{k_0-1},t^n_{k_0}}|^p + |S_{t^n_{k_0},t^n_{k_1}}|^p + |S_{t^n_{k_0-1},t^n_{k_1}}|^p + |S_{t^n_{k_1},t}|^p\\
    &\leq w(t^n_{k_0-1},t^n_{k_0}) + w(t^n_{k_0},t^n_{k_1}) + w(t^n_{k_0-1},t^n_{k_1}) + w(t^n_{k_1},t)\\
    &\leq 2w(t^n_{k_0-1},t),
  \end{align*}
  so that, putting this all together, we deduce the existence of a constant $\tilde{C} > 0$ such that $|A^n_{s,t}|^{\frac{p}{2}} \leq \tilde{C}w(t^n_{k_0-1},t)$. Taking an arbitrary partition $\mathcal{P}$ of the interval $[0,T]$, it follows that $\sum_{[s,t] \in \mathcal{P}} |A^n_{s,t}|^{\frac{p}{2}} \leq 2\tilde{C}w(0,T)$. We thus conclude that \eqref{eq:An p2var bound} holds with $C = (2\tilde{C})^{\frac{2}{p}}$.
\end{proof}

\begin{lemma}\label{lem: SSA is rough path}
  Suppose that $S\in D([0,T];\R^d)$ satisfies Property~\textup{(RIE)} with respect to $p$ and $(\mathcal{P}^n)_{n \in \N}$. With the natural notation $\int_s^t S_u \otimes \d S_u := \int_0^t S_u \otimes \d S_u - \int_0^s S_u \otimes \d S_u$, we define $A \colon \Delta_{[0,T]} \to \R^{d \times d}$ by 
  \begin{equation*}
    A_{s,t} = \int_s^t S_u \otimes \d S_u - S_s \otimes S_{s,t}, \qquad (s,t) \in \Delta_{[0,T]}.
  \end{equation*}
  Then, the triplet $\bS = (S,S,A)$ is a c{\`a}dl{\`a}g $p$-rough path.
\end{lemma}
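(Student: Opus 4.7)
The plan is to verify the three requirements of Definition~\ref{def:rough path} for the triplet $(S,S,A)$: membership of $S$ in $D^p([0,T];\R^d)$, membership of $A$ in $D_2^{p/2}(\Delta_{[0,T]};\R^{d \times d})$, and Chen's relation. The first and third are essentially immediate, while the second is the main content of the lemma and will be obtained by passing Lemma~\ref{lem: uniform bound on An} to the limit.

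First, observe that $S \in D^p([0,T];\R^d)$ at once from Property~\textup{(RIE)}: the bound $|S_{s,t}|^p \leq w(s,t)$ in \eqref{eq:RIE sups} together with superadditivity of $w$ yields $\|S\|_{p,[0,T]}^p \leq w(0,T)$, and $S$ is c\`adl\`ag by assumption. Chen's relation for $(S,S,A)$ reduces, after substituting the definition of $A$, to verifying the identity
\begin{equation*}
  -S_s \otimes S_{s,t} = -S_s \otimes S_{s,u} - S_u \otimes S_{u,t} + S_{s,u} \otimes S_{u,t}
\end{equation*}
for $0 \leq s \leq u \leq t \leq T$, which follows from writing $S_{s,u} = S_u - S_s$ and $S_{s,t} = S_{s,u} + S_{u,t}$.

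The bulk of the argument is showing $A \in D_2^{p/2}(\Delta_{[0,T]};\R^{d \times d})$. The plan is to compare $A$ with the family $A^n$ of Lemma~\ref{lem: uniform bound on An}, for which a uniform $p/2$-variation bound \eqref{eq:An p2var bound} has already been established. From the second bullet point of Property~\textup{(RIE)}, the Riemann sums $\int_0^t S^n_u \otimes \d S_u$ converge uniformly in $t$ to $\int_0^t S_u \otimes \d S_u$, and from the first bullet point $S^n \to S$ uniformly on $[0,T]$. Combining these in the representation
\begin{equation*}
  A^n_{s,t} = \Big(\int_0^t S^n_u \otimes \d S_u - \int_0^s S^n_u \otimes \d S_u\Big) - S^n_s \otimes S_{s,t},
\end{equation*}
and noting that $\sup_{(s,t)}|S_{s,t}|$ is bounded, one deduces that $A^n_{s,t} \to A_{s,t}$ uniformly on $\Delta_{[0,T]}$ as $n \to \infty$. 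Then, for any partition $\mathcal{P}$ of $[0,T]$, the finite sum $\sum_{[u,v] \in \mathcal{P}}|A_{u,v}|^{p/2}$ is the limit of the corresponding sums for $A^n$, each of which is bounded by $(Cw(0,T)^{2/p})^{p/2} = C^{p/2}w(0,T)$ by Lemma~\ref{lem: uniform bound on An}. Taking the supremum over $\mathcal{P}$ yields $\|A\|_{p/2,[0,T]} \leq Cw(0,T)^{2/p} < \infty$.

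It remains to check the c\`adl\`ag regularity in each variable. The map $t \mapsto \int_0^t S_u \otimes \d S_u$ is c\`adl\`ag as the uniform limit of the step functions $t \mapsto \int_0^t S^n_u \otimes \d S_u$; hence for fixed $s$, the map $t \mapsto A_{s,t} = \int_0^t S_u \otimes \d S_u - \int_0^s S_u \otimes \d S_u - S_s \otimes (S_t - S_s)$ is c\`adl\`ag in $t$. Similarly, for fixed $t$, the map $s \mapsto A_{s,t}$ is c\`adl\`ag since $s \mapsto \int_0^s S_u \otimes \d S_u$ and $s \mapsto S_s \otimes (S_t - S_s)$ are both c\`adl\`ag. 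The only subtle point in the whole argument is the uniform-in-$(s,t)$ convergence $A^n \to A$, which is why the integrated quantity in Property~\textup{(RIE)} is assumed to converge uniformly rather than just pointwise; once this is in hand, the required $p/2$-variation bound follows by the standard lower-semicontinuity argument sketched above, and everything else is routine.
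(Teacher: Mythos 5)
Your proof is correct and follows essentially the same route as the paper's: verify Chen's relation directly, deduce uniform convergence $A^n \to A$ from Property~\textup{(RIE)}, then use lower semicontinuity of the $\frac{p}{2}$-variation norm together with the uniform bound of Lemma~\ref{lem: uniform bound on An} to conclude $\|A\|_{p/2,[0,T]} < \infty$. You spell out the lower-semicontinuity step (approximating finite partition sums) and the c\`adl\`ag regularity of $s \mapsto A_{s,t}$ and $t \mapsto A_{s,t}$ a bit more explicitly than the paper does, but the argument is the same.
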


\begin{proof}
  It is straightforward to verify Chen's relation~\eqref{eq:Chens relation}, i.e.~that 
  $$
    A_{s,t} = A_{s,u} + A_{u,t} + S_{s,u} \otimes S_{u,t}, \qquad (s,t)\in \Delta_{[0,T]}.
  $$ 
  By Property~\textup{(RIE)}, we know that $\lim_{n \to \infty} A^n_{s,t} = A_{s,t}$, where the convergence is uniform in~$(s,t)$, and thus, being a uniform limit of c{\`a}dl{\`a}g functions, $A$ is itself c{\`a}dl{\`a}g. By the lower semi-continuity of the $\frac{p}{2}$-variation norm, and the result of Lemma~\ref{lem: uniform bound on An}, we have that
  \begin{equation*}
    \|A\|_{\frac{p}{2},[0,T]} \leq \liminf_{n \to \infty} \|A^n\|_{\frac{p}{2},[0,T]} \leq Cw(0,T)^{\frac{2}{p}} < \infty.
  \end{equation*}
  It follows that $(S,S,A)$ is a c{\`a}dl{\`a}g $p$-rough path.
\end{proof}

\subsection{The rough integral as a limit of Riemann sums}\label{subsec:integration under RIE}

While the rough integral in \eqref{eq:general rough integral defn} is a powerful tool to study various differential equations, it lacks the natural interpretation as the capital gain process in the context of mathematical finance. The aim of this subsection is to restore this interpretation by showing that the rough integral can be obtained as the limit of left-point Riemann sums provided that the integrator satisfies Property~\textup{(RIE)}. As preparation we need the following approximation result.

\begin{proposition}\label{prop: uniform convergence}
  Let $\mathcal{P}^n = \{0 = t^n_0 < t^n_1 < \cdots < t^n_{N_n} = T\}$, $n \in \N$, be a sequence of nested partitions with vanishing mesh size, so that $\mathcal{P}^n \subset \mathcal{P}^{n+1}$ for all $n$, and $|\mathcal{P}^n| \to 0$ as $n \to \infty$ (as in the setting of Property~\textup{(RIE)}). Let $F \colon [0,T] \to \R^d$ be a c{\`a}dl{\`a}g path, and define
  \begin{equation}\label{eq:defn Fn}
    F^n_t = F_T \1_{\{T\}}(t) + \sum_{k=0}^{N_n - 1} F_{t^n_k} \1_{[t^n_k,t^n_{k+1})}(t), \qquad t \in [0,T].
  \end{equation}
  Let
  \begin{equation}\label{eq:defn jump times of F}
    J_F := \{t \in (0,T] : F_{t-,t} \neq 0\}
  \end{equation}
  be the set of jump times of $F$. The following are equivalent:
  \begin{enumerate}
    \item[(i)] $J_F \subseteq \cup_{n \in \N} \mathcal{P}^n$,
    \item[(ii)] The sequence $(F^n)_{n \in \N}$ converges pointwise to $F$,
    \item[(iii)] The sequence $(F^n)_{n \in \N}$ converges uniformly to $F$.
  \end{enumerate}
\end{proposition}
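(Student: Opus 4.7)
The plan is to establish (iii) $\Rightarrow$ (ii) $\Rightarrow$ (i) $\Rightarrow$ (iii); the first implication is trivial, the second is a quick contraposition, and the third carries the real content. For (ii) $\Rightarrow$ (i) I argue contrapositively: suppose some $t \in J_F$ lies in none of the $\mathcal{P}^n$. For each $n$ let $k_n$ be the unique index with $t \in [t^n_{k_n}, t^n_{k_n+1})$; since $t \notin \mathcal{P}^n$ we must have $t^n_{k_n} < t$, and $0 < t - t^n_{k_n} \leq |\mathcal{P}^n| \to 0$ forces $t^n_{k_n} \uparrow t$ strictly from below. Then $F^n_t = F_{t^n_{k_n}} \to F_{t-}$ by the left-limit property of a càdlàg function, and $F_{t-} \neq F_t$ since $t \in J_F$, contradicting pointwise convergence.

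For the main implication (i) $\Rightarrow$ (iii), I exploit the following standard fact about càdlàg functions: for every $\epsilon > 0$ and every finite set $E \subset [0,T]$, there exists a partition $0 = s_0 < s_1 < \cdots < s_K = T$ containing $E$ such that $\sup_{u,v \in [s_{j-1}, s_j)} |F_u - F_v| < \epsilon$ for each $j$. I apply this with $E := J_\epsilon = \{t \in (0,T] : |F_{t-,t}| \geq \epsilon\}$, which is finite by càdlàg-ness. By hypothesis (i) together with nestedness of $(\mathcal{P}^n)_{n \in \N}$, there exists $n_0$ with $J_\epsilon \subset \mathcal{P}^n$ for all $n \geq n_0$. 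Choosing $n$ also large enough that $|\mathcal{P}^n| < \min_j (s_j - s_{j-1})$, any cell $[t^n_k, t^n_{k+1})$ contains at most one $s_j$ in its interior. When no $s_j$ lies in $(t^n_k, t^n_{k+1})$, the whole cell sits inside some $[s_{j-1}, s_j)$ and the oscillation bound immediately gives $|F^n_t - F_t| < \epsilon$ for every $t$ in the cell. When a unique $s_j$ does lie in $(t^n_k, t^n_{k+1})$, the inclusion $J_\epsilon \subset \mathcal{P}^n$ rules out $s_j \in J_\epsilon$, so the jump satisfies $|F_{s_j-,s_j}| < \epsilon$; splitting $|F_{t^n_k} - F_t|$ through $F_{s_j-}$ and $F_{s_j}$ and bounding each side by the oscillation on $[s_{j-1}, s_j)$ and $[s_j, s_{j+1})$ respectively delivers $|F^n_t - F_t| < 3\epsilon$. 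Running the argument with $\epsilon/3$ in place of $\epsilon$ then yields the desired uniform convergence.

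I expect the second case in (i) $\Rightarrow$ (iii) to be the delicate step: the auxiliary partition $\{s_j\}$ produced by the càdlàg oscillation lemma generally contains points that are not jump points of $F$ at all, and such points need not belong to any $\mathcal{P}^n$, so one cannot hope to align $\{s_j\}$ with $\mathcal{P}^n$ in any strong sense. It is precisely hypothesis (i) that guarantees the \emph{dangerous} partition points---those carrying a jump of size $\geq \epsilon$---are absorbed into $\mathcal{P}^n$ for large $n$, forcing any $s_j$ strictly interior to a cell $(t^n_k, t^n_{k+1})$ to be a near-continuity point of $F$. This is what makes the triangle-inequality split through $F_{s_j-}$ usable, and it is the mechanism that upgrades the purely pointwise hypothesis (i) to the uniform estimate (iii).
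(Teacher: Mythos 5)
Your proof is correct, and the hard implication is handled by a genuinely different argument than the paper's. Both proofs dispose of the ``cheap'' direction(s) the same way, using the observation that if $t \in J_F$ lies in none of the $\mathcal{P}^n$ then $F^n_t \to F_{t-} \neq F_t$. The divergence is in how pointwise convergence gets upgraded to uniform. The paper proves (i)~$\Leftrightarrow$~(ii) directly and then invokes the notion of an \emph{equiregulated} family from Frankov\'a (2019): it shows $\{F^n\}$ is equiregulated by a three-step case analysis (left limits at arbitrary $t$, right limits at jump times and at $0$, right limits at continuity points), after which a black-box theorem converts pointwise to uniform convergence. You instead prove (i)~$\Rightarrow$~(iii) directly from the classical oscillation lemma for c\`adl\`ag functions: cover $[0,T)$ by finitely many intervals $[s_{j-1},s_j)$ of $F$-oscillation $<\epsilon$, refine so that the finite set of $\epsilon$-large jumps $J_\epsilon$ sits among the $s_j$'s, and then use (i) plus nestedness to swallow $J_\epsilon$ into $\mathcal{P}^n$ for large $n$, so any $s_j$ landing strictly inside a cell of $\mathcal{P}^n$ necessarily carries a small jump, which the triangle inequality through $F_{s_j-}, F_{s_j}$ absorbs at cost $3\epsilon$. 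Your route is more self-contained (it needs only the standard finite-oscillation decomposition rather than the equiregulated-family machinery) and makes the role of hypothesis (i) very transparent: it is exactly what forces every $s_j$ not absorbed into $\mathcal{P}^n$ to be a near-continuity point. The paper's route is a little more modular---equiregulatedness is a reusable criterion and its proof does not need to construct an auxiliary oscillation partition---but both arguments are fighting the same enemy, namely the large jumps of $F$ that are not yet resolved by $\mathcal{P}^n$.

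One small remark: in your Case~1 you should note that ``no $s_j$ in the interior of the cell'' includes the possibility that some $s_j$ coincides with an endpoint $t^n_k$ or $t^n_{k+1}$; the argument still goes through because the cell $[t^n_k,t^n_{k+1})$ then sits inside $[s_j, s_{j+1})$ (or $[s_{j-1}, s_j)$ respectively), but it is worth flagging since ``the whole cell sits inside some $[s_{j-1},s_j)$'' as stated only literally covers the case where both endpoints of the cell are interior to a single oscillation interval.
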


\begin{proof}
  We first show that conditions (i) and (ii) are equivalent. To this end, suppose that $J_F \subseteq \cup_{n \geq 1} \mathcal{P}^n$ and let $t \in (0,T]$. If $t \in J_F$, then there exists $m \geq 1$ such that $t \in \mathcal{P}^n$ for all $n \geq m$. In this case we then have that $F^n_t = F_t$ for all $n \geq m$. If $t \notin J_F$, then $F_{t-} = F_t$, and since the mesh size $|\mathcal{P}^n| \to 0$, it follows that $F^n_t \to F_{t-} = F_t$ as $n \to \infty$.

  Now suppose instead that there exists a $t \in J_F$ such that $t \notin \cup_{n \geq 1} \mathcal{P}^n$. We then observe that $F^n_t \to F_{t-} \neq F_t$, so that $F^n_t \nrightarrow F_t$. This establishes the equivalence of (i) and (ii).

  Since (iii) clearly implies (ii), it only remains to show that (ii) implies (iii). By \cite[Theorem~3.3]{Frankova2019}, it is enough to show that the family of paths $\{F^n : n \geq 1\}$ is equiregulated in the sense of \cite[Definition~3.1]{Frankova2019}.

  \textit{Step~1.}
  Let $t \in (0,T]$ and $\epsilon > 0$. Since the left limit $F_{t-}$ exists, there exists $\delta > 0$ with $t - \delta > 0$, such that
  \begin{equation*}
    |F_{s,t-}| < \frac{\epsilon}{2} \qquad \text{for all} \quad s \in (t - \delta,t).
  \end{equation*}
  Let
  $$
    m = \min \{n \geq 1 : \exists k \hspace{7pt} \text{such that} \hspace{7pt} t^n_k \in (t - \delta,t)\}.
  $$
  Since $|\mathcal{P}^n| \to 0$ as $n \to \infty$, we know that $m < \infty$. Moreover, since the sequence of partitions is nested, we immediately have that, for all $n \geq m$, there exists a $k$ such that $t^n_k \in (t - \delta,t)$. We define
  $$
    u = \min \{t^m_k \in \mathcal{P}^m : t^m_k \in (t - \delta,t)\} \equiv \min (\mathcal{P}^m \cap (t - \delta,t)),
  $$
  and let $s \in [u,t)$ and $n \geq 1$.

  If $n < m$, then there does not exist a $k$ such that $t^n_k \in (t - \delta,t)$, which implies that $F^n$ is constant on the interval $(t - \delta,t)$, and hence that $F^n_s = F^n_{t-}$.

  Suppose instead that $n \geq m$. Let $i = \max \{k : t^n_k \leq s\}$ and $j = \max \{k : t^n_k < t\}$. By the definition of $u$, we see that $t^n_i \in [u,t)$ and $t^n_j \in [u,t)$. Then
  \begin{equation*}
    |F^n_s - F^n_{t-}| = |F_{t^n_i} - F_{t^n_j}| \leq |F_{t^n_i,t-}| + |F_{t^n_j,t-}| < \frac{\epsilon}{2} + \frac{\epsilon}{2} = \epsilon.
  \end{equation*}
  Thus, we have that $|F^n_s - F^n_{t-}| < \epsilon$ for all $s \in [u,t)$ and all $n \geq 1$.

  \textit{Step~2.}
  Let $t \in (J_F \cup \{0\}) \setminus \{T\}$ and $\epsilon > 0$. Since $F$ is right-continuous, there exists a $\delta > 0$ with $t + \delta < T$, such that
  \begin{equation*}
    |F_{t,s}| < \epsilon \qquad \text{for all} \quad s \in [t,t + \delta).
  \end{equation*}
  By part~(i), we know that $t \in \cup_{n \geq 1} \mathcal{P}^n$. Let
  $$
    m = \min \{n \geq 1 : \exists k \hspace{7pt} \text{such that} \hspace{7pt} t^n_k = t\}.
  $$
  Since $t \in \cup_{n \geq 1} \mathcal{P}^n$, it is clear that $m < \infty$. We define
  $$
    u = \min \{t^m_k \in \mathcal{P}^m : t^m_k > t\} \equiv \min (\mathcal{P}^m \cap (t,T]).
  $$
  We then let $v \in (t,u \wedge (t + \delta))$, $s \in (t,v]$, and $n \geq 1$.

  If $n < m$, then, since $v < u$, there does not exist a $k$ such that $t^n_k \in [t,v]$. Hence, $F^n$ is constant on the interval $[t,v]$, so that in particular $F^n_s = F^n_t$.

  Suppose instead that $n \geq m$. By the definition of $m$, there exists a~$j$ such that $t^n_j = t$. Let $i = \max \{k : t^n_k \leq s\}$. In particular, we then have that $t = t^n_j \leq t^n_i \leq s \leq v < t + \delta$, and hence that
  \begin{equation*}
    |F^n_s - F^n_t| = |F_{t^n_i} - F_{t^n_j}| = |F_{t,t^n_i}| < \epsilon.
  \end{equation*}
  Thus, we have that $|F^n_s - F^n_t| < \epsilon$ for all $s \in (t,v]$ and all $n \geq 1$.

  \textit{Step~3.}
  Let $t \in (0,T) \setminus J_F$ and $\epsilon > 0$. Since~$F$ is continuous at time $t$, there exists a $\delta > 0$ with $0 < t - \delta$ and $t + \delta < T$, such that
  \begin{equation*}
    |F_{s,t}| < \frac{\epsilon}{2} \qquad \text{for all} \quad s \in (t - \delta,t + \delta).
  \end{equation*}
  Let
  $$
    m = \min \{n \geq 1 : \exists k \hspace{7pt} \text{such that} \hspace{7pt} t^n_k \in (t - \delta,t]\}.
  $$
  Since $|\mathcal{P}^n| \to 0$ as $n \to \infty$, we know that $m < \infty$. We define
  $$
    u = \min \{t^m_k \in \mathcal{P}^m : t^m_k > t\} \equiv \min (\mathcal{P}^m \cap (t,T]).
  $$
  We then let $v \in (t,u \wedge (t + \delta))$, $s \in (t,v]$ and $n \geq 1$.

  If $n < m$, then, since $v < u$, there does not exist a~$k$ such that $t^n_k \in (t,v]$. Hence, $F^n$ is constant on the interval $[t,v]$, so that in particular $F^n_s = F^n_t$.

  Suppose instead that $n \geq m$. Let $i = \max \{k : t^n_k \leq s\}$ and $j = \max \{k : t^n_k \leq t\}$. Since, by the definition of $m$, there exists at least one $k$ such that $t^n_k \in (t - \delta,t]$, and since $t < s \leq v < t + \delta$, it follows that $t^n_i \in (t - \delta,t + \delta)$ and $t^n_j \in (t - \delta,t]$. Then
  \begin{equation*}
    |F^n_s - F^n_t| = |F_{t^n_i} - F_{t^n_j}| \leq |F_{t^n_i,t}| + |F_{t^n_j,t}| < \frac{\epsilon}{2} + \frac{\epsilon}{2} = \epsilon.
  \end{equation*}
  Thus, we have that $|F^n_s - F^n_t| < \epsilon$ for all $s \in (t,v]$ and all $n \geq 1$. It follows that the family of paths $\{F^n : n \geq 1\}$ is indeed equiregulated.
\end{proof}

The next theorem is the main result of this section, stating that the rough integral can be approximated by left-point Riemann sums along a suitable sequence of partitions, in the spirit of F{\"o}llmer's pathwise integration.

\begin{theorem}\label{thm: rough int as limit Riemann sums}
  Let $q \geq p$ such that $\frac{2}{p} + \frac{1}{q} > 1$, and let $r > 1$ such that $\frac{1}{r} = \frac{1}{p} + \frac{1}{q}$. Suppose that $S \in D([0,T];\R^d)$ satisfies Property~\textup{(RIE)} with respect to $p$ and $(\mathcal{P}^n)_{n \in \N}$. Let $(F,F') \in \mathcal{V}^{q,r}_S$ and $(G,G') \in \mathcal{V}^{q,r}_S$ be controlled paths with respect to $S$, and assume that $J_F \subseteq \cup_{n \in \N} \mathcal{P}^n$, where $J_F$ is the set of jump times of $F$, as in \eqref{eq:defn jump times of F}. Then the rough integral of $(F,F')$ against $(G,G')$ relative to the rough path $\bS = (S,S,A)$, as defined in \eqref{eq:general rough integral defn}, is given by
  \begin{equation*}
    \int_0^t F_u \dd G_u = \lim_{n \to \infty} \sum_{k=0}^{N_n-1} F_{t^n_k} G_{t^n_k \wedge t,t^n_{k+1} \wedge t},
  \end{equation*}
  where the convergence is uniform in $t \in [0,T]$.
\end{theorem}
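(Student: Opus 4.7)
The strategy is to compare the left-point Riemann sum with the compensated Riemann sum defining the rough integral. Applying Proposition~\ref{prop: general rough integral exists} to the rough path $\bS = (S,S,A)$ of Lemma~\ref{lem: SSA is rough path}, and reading off the uniform-in-$t$ version of the sewing bound~\eqref{eq:est int of controlled paths}, one obtains
\[
\int_0^t F_u \dd G_u = \lim_{n \to \infty}\sum_{k=0}^{N_n-1}\Bigl( F_{t^n_k} G_{t^n_k \wedge t,\, t^n_{k+1} \wedge t} + F'_{t^n_k} G'_{t^n_k} A_{t^n_k \wedge t,\, t^n_{k+1} \wedge t} \Bigr),
\]
uniformly in $t \in [0,T]$. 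It therefore suffices to prove
\[
\Delta^n_t := \sum_{k=0}^{N_n-1} F'_{t^n_k} G'_{t^n_k} A_{t^n_k \wedge t,\, t^n_{k+1} \wedge t} \longrightarrow 0 \quad \text{uniformly in } t.
\]

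\textbf{Telescoping identity.} The decisive observation is that $A^n_{t^n_k, t^n_{k+1}} = 0$ for every pair of consecutive partition points, because $S^n$ being constant on $[t^n_k, t^n_{k+1})$ forces $\int_{t^n_k}^{t^n_{k+1}} S^n_u \otimes \dd S_u = S_{t^n_k} \otimes S_{t^n_k, t^n_{k+1}} = S^n_{t^n_k} \otimes S_{t^n_k, t^n_{k+1}}$. Hence $A_{t^n_k, t^n_{k+1}} = (A - A^n)_{t^n_k, t^n_{k+1}}$, and iterating Chen's relation~\eqref{eq:Chens relation} yields the telescoping identity
\[
\sum_{k=0}^{\ell - 1}(A - A^n)_{t^n_k, t^n_{k+1}} = \Phi^n(t^n_\ell), \qquad \Phi^n(t) := \int_0^t(S_u - S^n_u) \otimes \dd S_u,
\]
where $\Phi^n$ is a well-defined c{\`a}dl{\`a}g path by Property~\textup{(RIE)}, with $\Phi^n \to 0$ uniformly on $[0,T]$. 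To sidestep the difficulty of controlling the $p/2$-variation of $\Phi^n$ off the partition, I would introduce the piecewise-constant discretization $\tilde{\Phi}^n(t) := \Phi^n(t^n_k)$ for $t \in [t^n_k, t^n_{k+1})$. A direct coarsening argument gives $\|\tilde{\Phi}^n\|_{p/2,[0,T]}^{p/2} \leq \|A - A^n\|_{p/2,[0,T]}^{p/2}$, which is uniformly bounded in $n$ by Lemma~\ref{lem: uniform bound on An} together with the finite $p/2$-variation of $A$, while $\|\tilde{\Phi}^n\|_\infty \to 0$. A classical interpolation between uniform and $p/2$-variation norms then gives $\|\tilde{\Phi}^n\|_{p'/2,[0,T]} \to 0$ for every $p' > p$.

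\textbf{Love--Young conclusion and main obstacle.} Writing $H := F'G' \in D^q([0,T];\R^{d \times d})$---which inherits finite $q$-variation from $F'$ and $G'$---and picking $p' > p$ close enough that $\tfrac{1}{q} + \tfrac{2}{p'} > 1$ (possible by the assumption $\tfrac{1}{q} + \tfrac{2}{p} > 1$), a c{\`a}dl{\`a}g Love--Young inequality applied to the specific partition $\mathcal{P}^n$ bounds, for every $\ell \leq N_n$,
\[
\biggl|\sum_{k=0}^{\ell-1} H_{t^n_k}\tilde{\Phi}^n_{t^n_k, t^n_{k+1}}\biggr| \leq |H_0||\tilde{\Phi}^n(t^n_\ell)| + C\|H\|_{q,[0,t^n_\ell]}\|\tilde{\Phi}^n\|_{p'/2,[0,t^n_\ell]},
\]
which tends to zero uniformly in $\ell$, and hence uniformly in $t$. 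The remaining contribution is the boundary term $F'_{t^n_{\ell^*}} G'_{t^n_{\ell^*}} A_{t^n_{\ell^*},\, t}$ with $\ell^* = \max\{k : t^n_k \leq t\}$, which I would bound directly using the uniform convergence $F^n \to F$ guaranteed by the hypothesis $J_F \subseteq \bigcup_n \mathcal{P}^n$ via Proposition~\ref{prop: uniform convergence}. The principal technical hurdle is justifying the discrete Love--Young step in the c{\`a}dl{\`a}g setting with matrix-valued $(H,\tilde{\Phi}^n)$ and ensuring the interpolation bound $\|\tilde{\Phi}^n\|_{p'/2,[0,t]} \to 0$ is uniform in $t$; the key enabling ingredient is the uniform $p/2$-variation control of the two-parameter quantity $A - A^n$ supplied by Lemma~\ref{lem: uniform bound on An}.
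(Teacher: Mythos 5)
Your approach is genuinely different from the paper's, and the core idea is attractive: instead of reconstructing the Riemann sums as rough integrals against a modified rough path $\bS^n = (S,S^n,A^n)$ (as the paper does), you work directly with the compensator $\sum_k F'_{t^n_k} G'_{t^n_k} A_{t^n_k \wedge t, t^n_{k+1}\wedge t}$ and show it vanishes via the telescoping identity $A_{t^n_k,t^n_{k+1}} = (A - A^n)_{t^n_k,t^n_{k+1}} = \Phi^n_{t^n_k,t^n_{k+1}}$ together with a discrete Young--Loeve estimate for the pairing of $F'G'$ against $\tilde\Phi^n$. That telescoping observation, and the use of interpolation between the uniform smallness $\|\Phi^n\|_\infty \to 0$ and the uniform $p/2$-variation bound from Lemma~\ref{lem: uniform bound on An}, are both sound and not in the paper. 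However, there are two genuine gaps.

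First, the opening step is not actually established. You write that \emph{``reading off the uniform-in-$t$ version of the sewing bound''} gives uniform convergence of the compensated Riemann sums along $\mathcal{P}^n([0,t])$. Proposition~\ref{prop: general rough integral exists} only gives, for each fixed $t$, convergence along partitions of $[0,t]$ with vanishing mesh. Upgrading to uniformity in $t$ in the c\`adl\`ag setting is precisely where the delicate analysis lives: the naive bound $\sum_k w(t^n_k,t^n_{k+1})^\theta \leq (\max_k w(t^n_k,t^n_{k+1}))^{\theta-1} w(0,T)$ does not go to zero in general, because the controls in~\eqref{eq:est int of controlled paths} involve $G$, $G'$ and $R^G$, whose jump times are not assumed to lie in $\cup_n \mathcal{P}^n$. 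What saves the argument is the \emph{product} structure of each term in~\eqref{eq:est int of controlled paths}: each summand has at least one factor built from $S$, $A$ or $F$, and it is precisely the hypothesis $J_F \subseteq \cup_n \mathcal{P}^n$ (together with $J_S, J_A \subseteq \cup_n\mathcal{P}^n$, which follow from Property~(RIE)) that forces those factors to be uniformly small on the blocks of $\mathcal{P}^n$. You neither identify this point nor use the $J_F$ hypothesis anywhere it actually matters; in effect, the claim you start from is equivalent to the theorem modulo the compensator part, so the argument as written is circular. The paper circumvents this entirely by routing through Proposition~\ref{prop general rough int is cts}(ii), which delivers convergence in $p$-variation norm (hence uniform) without ever confronting mesh-Riemann--Stieltjes convergence on $\mathcal{P}^n$.

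Second, your treatment of the boundary block is misattributed. The remaining term is $F'_{t^n_{\ell^*}} G'_{t^n_{\ell^*}} A_{t^n_{\ell^*},t}$, and the correct reason it vanishes uniformly is that $A^n_{t^n_{\ell^*},t}=0$, hence $A_{t^n_{\ell^*},t} = \Phi^n(t) - \Phi^n(t^n_{\ell^*})$, which is $O(\|\Phi^n\|_\infty)$; this has nothing to do with $F^n \to F$. The hypothesis $J_F \subseteq \cup_n \mathcal{P}^n$ is needed, as discussed above, in the sewing error for the compensated sums (and, in the paper's route, to ensure $F^n \to F$ and hence $R^n \to R$ uniformly for the interpolation step), not to control this boundary term.
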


\begin{proof}
  We recall from Lemma~\ref{lem: SSA is rough path} that $\bS = (S,S,A)$ is a $p$-rough path, so that, by Proposition~\ref{prop: general rough integral exists}, the rough integral of $(F,F')$ against $(G,G')$ (relative to $\bS$) exists. It is also clear that $\bS^n := (S,S^n,A^n)$ is a $p$-rough path, where $A^n$ was defined in \eqref{eq:defn An}. Moreover, by Property~\textup{(RIE)}, we immediately have that $S^n$ and $A^n$ converge uniformly to $S$ and $A$ respectively as $n \to \infty$.

  For each $n \geq 1$, we let $F^n$ be the path defined in~\eqref{eq:defn Fn}. We consider the pair $(F^n,F')$ as a controlled path with respect to $S^n$, defining the remainder term $R^n$ by the usual relation:
  \begin{equation*}
    F^n_{s,t} = F'_s S^n_{s,t} + R^n_{s,t}, \qquad (s,t) \in \Delta_{[0,T]}.
  \end{equation*}
  Since $S^n$ converges uniformly to $S$ and, by Proposition~\ref{prop: uniform convergence}, $F^n$ converges uniformly to $F$, it follows that $R^n$ also converges uniformly to the remainder term $R$ corresponding to the $S$-controlled path $(F,F')$.

  We observe that $\|S^n\|_{p,[0,T]} \leq \|S\|_{p,[0,T]}$ and $\|F^n\|_{p,[0,T]} \leq \|F\|_{p,[0,T]}$, and we have from Lemma~\ref{lem: uniform bound on An} that $\|A^n\|_{\frac{p}{2},[0,T]} \leq Cw(0,T)^{\frac{2}{p}}$ for every $n \geq 1$. It remains to show that $R^n$ is bounded in $r$-variation, uniformly in $n$.

  Let $n \geq 1$ and $(s,t) \in \Delta_{[0,T]}$. If there exists a~$k$ such that $t^n_k \leq s < t < t^n_{k+1}$, then
  $$
    R^n_{s,t} = F^n_{s,t} - F'_s S^n_{s,t} = F_{t^n_k,t^n_k} - F'_s S_{t^n_k,t^n_k} = 0.
  $$
  If there exists a $k$ such that $t^n_k \leq s < t = t^n_{k+1}$, then
  \begin{align*}
    |R^n_{s,t}|^r &= |F^n_{s,t} - F'_s S^n_{s,t}|^r = |F_{t^n_k,t^n_{k+1}} - F'_s S_{t^n_k,t^n_{k+1}}|^r\\
    &\lesssim |F_{t^n_k,t^n_{k+1}} - F'_{t^n_k} S_{t^n_k,t^n_{k+1}}|^r + |F'_{t^n_k,s} S_{t^n_k,t^n_{k+1}}|^r\\
    &\lesssim |R_{t^n_k,t^n_{k+1}}|^r + |F'_{t^n_k,s}|^q + |S_{t^n_k,t^n_{k+1}}|^p,
  \end{align*}
  where in the last line we used Young's inequality, recalling that $\frac{1}{r} = \frac{1}{p} + \frac{1}{q}$.

  Otherwise, let $k_0$ be the smallest~$k$ such that $t^n_k \in [s,t]$, and let $k_1$ be the largest such~$k$. After a short calculation, we find that
  \begin{equation*}
    R^n_{s,t} = R^n_{s,t^n_{k_0}} + R^n_{t^n_{k_0},t^n_{k_1}} + R^n_{t^n_{k_1},t} + F'_{s,t^n_{k_0}} S_{t^n_{k_0},t^n_{k_1}} + F'_{s,t^n_{k_1}} S^n_{t^n_{k_1},t}.
  \end{equation*}
  We observe that $S^n_{t^n_{k_1},t} = 0$ and $R^n_{t^n_{k_0},t^n_{k_1}} = R_{t^n_{k_0},t^n_{k_1}}$. We can deal with the terms $R^n_{s,t^n_{k_0}}$ and $R^n_{t^n_{k_1},t}$ using the above, and we bound $|F'_{s,t^n_{k_0}} S_{t^n_{k_0},t^n_{k_1}}|^r \lesssim |F'_{s,t^n_{k_0}}|^q + |S_{t^n_{k_0},t^n_{k_1}}|^p$. Putting this all together, we have that
  \begin{equation*}
    |R^n_{s,t}|^r \leq C \Big(|R_{t^n_{k_0-1},t^n_{k_0}}|^r + |F'_{t^n_{k_0-1},s}|^q + |S_{t^n_{k_0-1},t^n_{k_0}}|^p + |R_{t^n_{k_0},t^n_{k_1}}|^r + |F'_{s,t^n_{k_0}}|^q + |S_{t^n_{k_0},t^n_{k_1}}|^p\Big),
  \end{equation*}
  where the constant $C$ depends only on $p, q$ and $r$. Taking an arbitrary partition $\mathcal{P}$ of the interval $[0,T]$, we deduce that $\sum_{[s,t] \in \mathcal{P}} |R^n_{s,t}|^r \leq 2C(\|R\|_{r,[0,T]}^r + \|F'\|_{q,[0,T]}^q + \|S\|_{p,[0,T]}^p)$. Thus, $\|R^n\|_{r,[0,T]}$ is bounded uniformly in $n \geq 1$.

  Let $p' > p$, $q' > q$ and $r' > r$, such that $p' \in (2,3)$, $q' \geq p'$, $\frac{2}{p'} + \frac{1}{q'} > 1$, and $\frac{1}{r'} = \frac{1}{p'} + \frac{1}{q'}$. Since the sequence $(S^n)_{n \geq 1}$ has uniformly bounded $p$-variation, and $S^n$ converges uniformly to $S$ as $n \to \infty$, it follows by interpolation that $S^n$ converges to $S$ with respect to the $p'$-variation norm, i.e.~$\|S^n - S\|_{p',[0,T]} \to 0$ as $n \to \infty$. It follows similarly that $\|A^n - A\|_{\frac{p'}{2},[0,T]} \to 0$ and $\|R^n - R\|_{r',[0,T]} \to 0$, and hence also that $\|\bS^n;\bS\|_{p',[0,T]} \to 0$ as $n \to \infty$. It thus follows from part~(ii) of Proposition~\ref{prop general rough int is cts} that
  \begin{equation}\label{eq:rough integrals converge}
    \int_0^t F^n_u \dd G_u \longrightarrow \int_0^t F_u \dd G_u \qquad \text{as} \quad n \longrightarrow \infty,
  \end{equation}
  where the convergence is uniform in $t \in [0,T]$. Note that in \eqref{eq:rough integrals converge} the integral $\int_0^t F^n_u \dd G_u$ is defined relative to the rough path $\bS^n = (S,S^n,A^n)$, whilst the limiting rough integral $\int_0^t F_u \dd G_u$ is defined relative to $\bS = (S,S,A)$.

  We recall from Proposition~\ref{prop: general rough integral exists} that the integral of $(F^n,F')$ against $(G,G')$ relative to $\bS^n = (S,S^n,A^n)$ is given by the limit
  \begin{equation*}
     \int_0^t F^n_u \dd G_u = \lim_{|\mathcal{P}| \to 0} \sum_{[u,v] \in \mathcal{P}} F^n_u G_{u,v} + F'_u G'_u A^n_{u,v},
  \end{equation*}
  where the limit is taken over any sequence of partitions of the interval $[0,t]$ with vanishing mesh size. Take any refinement $\tilde{\mathcal{P}}$ of the partition $(\mathcal{P}^n \cup \{t\}) \cap [0,t]$ (where as usual $\mathcal{P}^n$ is the partition given in Property~\textup{(RIE)}), and let $[u,v] \in \tilde{\mathcal{P}}$. By the choice of the partition $\tilde{\mathcal{P}}$, there exists a $k$ such that $t^n_k \leq u < v \leq t^n_{k+1}$, which, recalling \eqref{eq:defn An}, implies that $A^n_{u,v} = 0$. Since the mesh size of $\tilde{\mathcal{P}}$ may be arbitrarily small, it follows that
  $$
    \lim_{|\tilde{\mathcal{P}}| \to 0} \sum_{[u,v] \in \tilde{\mathcal{P}}} F'_u G'_u A^n_{u,v} = 0.
  $$
  To conclude, we then simply recall~\eqref{eq:rough integrals converge}, and note that
  \begin{equation*}
     \int_0^t F^n_u \dd G_u = \lim_{|\tilde{\mathcal{P}}| \to 0} \sum_{[u,v] \in \tilde{\mathcal{P}}} F^n_u G_{u,v} = \sum_{k=0}^{N_n-1} F_{t^n_k} G_{t^n_k \wedge t,t^n_{k+1} \wedge t}.
  \end{equation*}
\end{proof}

We can actually generalize the result of Theorem~\ref{thm: rough int as limit Riemann sums} to a slightly larger class of integrands.

\begin{corollary}
  Recall the assumptions of Theorem~\ref{thm: rough int as limit Riemann sums}, and let $\gamma \in D^r([0,T];\R^d)$. Then, the rough integral of the controlled path $H = F + \gamma$, given by $(H,H') := (F + \gamma,F') \in \mathcal{V}^{q,r}_S$, against $(G,G')$ is given by
  \begin{equation*}
    \int_0^t H_u \dd G_u = \lim_{n \to \infty} \sum_{k=0}^{N_n-1} H_{t^n_k} G_{t^n_k \wedge t,t^n_{k+1} \wedge t}
  \end{equation*}
  for every $t \in [0,T]$.
\end{corollary}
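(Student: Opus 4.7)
The plan is to exploit the decomposition $(H, H') = (F, F') + (\gamma, 0)$ in the space of $S$-controlled paths, then handle each summand separately: the $F$-part by direct invocation of Theorem~\ref{thm: rough int as limit Riemann sums}, and the $\gamma$-part by applying the definition of the rough integral in \eqref{eq:general rough integral defn} to a controlled path with vanishing Gubinelli derivative.

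First I would verify that $(\gamma, 0) \in \mathcal{V}^{q,r}_S$. The zero function trivially lies in $D^q([0,T];\cL(\R^d;\R^d))$. Since $r \leq p$, any c{\`a}dl{\`a}g path of finite $r$-variation has finite $p$-variation, so $\gamma \in D^r \subseteq D^p$. The remainder associated to $(\gamma, 0)$ is simply $R^\gamma_{s,t} = \gamma_{s,t} - 0 \cdot S_{s,t} = \gamma_{s,t}$, which has finite $r$-variation by hypothesis. Hence $(\gamma, 0) \in \mathcal{V}^{q,r}_S$, and the additive structure of the controlled path space yields $(H, H') = (F, F') + (\gamma, 0) \in \mathcal{V}^{q,r}_S$.

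Next, since the rough integral in \eqref{eq:general rough integral defn} is, by construction, linear in the integrand-derivative pair, I would split
\[
\int_0^t H_u \dd G_u = \int_0^t F_u \dd G_u + \int_0^t \gamma_u \dd G_u.
\]
The first summand is immediately handled by Theorem~\ref{thm: rough int as limit Riemann sums}, whose assumptions (in particular $J_F \subseteq \bigcup_{n \in \N} \cP^n$) are inherited from those of the corollary; this yields the left-point Riemann sum representation for the $F$-part. For the second summand, the key observation is that the compensator $F'_u G'_u \X_{u,v}$ appearing in \eqref{eq:general rough integral defn} vanishes identically when the Gubinelli derivative of the integrand is zero, so the compensated Riemann sum collapses to a plain left-point Riemann sum. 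Applying Proposition~\ref{prop: general rough integral exists} along the partition sequence $(\cP^n \cup \{t\}) \cap [0,t]$, whose mesh sizes tend to zero, gives
\[
\int_0^t \gamma_u \dd G_u = \lim_{n \to \infty} \sum_{k=0}^{N_n - 1} \gamma_{t^n_k} G_{t^n_k \wedge t,\, t^n_{k+1} \wedge t}.
\]
Summing the two limits yields the desired identity.

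No serious technical obstacle is expected: the argument reduces the corollary to a clean invocation of the existing theorem together with the direct definition of the rough integral. The only conceptual point meriting a moment's care is recognising that the natural way to incorporate a pure finite $r$-variation ``drift'' $\gamma$ into the controlled-path framework is via the trivial lift $(\gamma, 0)$, which is precisely what makes the compensated rough-integral sum degenerate to the bare left-point Riemann sum demanded by the conclusion.
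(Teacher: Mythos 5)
Your proof is correct and follows essentially the same route as the paper's: decompose $(H,H') = (F,F') + (\gamma,0)$, observe that the vanishing Gubinelli derivative of $\gamma$ collapses the compensated Riemann sums in Proposition~\ref{prop: general rough integral exists} to plain left-point Riemann sums along $(\cP^n \cup \{t\}) \cap [0,t]$, and then combine with Theorem~\ref{thm: rough int as limit Riemann sums} for the $F$-part by linearity. The added verification that $(\gamma,0) \in \mathcal{V}^{q,r}_S$ is a harmless elaboration of what the paper states without comment.
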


The point here is that the path $\gamma$ may have jump times which do not belong to the set $\cup_{n \in \N} \mathcal{P}^n$.

\begin{proof}
  Since $\gamma$ has finite $r$-variation, we immediately have that $\gamma$ is a controlled path with Gubinelli derivative simply given by $\gamma' = 0$. By linearity, it is then clear that $(H,H') = (F,F') + (\gamma,0)$ is indeed a controlled path with respect to $S$. Since $\gamma' = 0$, we have from Proposition~\ref{prop: general rough integral exists} that
  $$
    \int_0^t \gamma_u \dd G_u = \lim_{|\mathcal{P}| \to 0} \sum_{[u,v] \in \mathcal{P}} \gamma_u G_{u,v} = \lim_{n \to \infty} \sum_{k=0}^{N_n-1} \gamma_{t^n_k} G_{t^n_k \wedge t,t^n_{k+1} \wedge t}.
  $$
  By linearity, we have that $\int_0^t H_u \dd G_u = \int_0^t F_u \dd G_u + \int_0^t \gamma_u \dd G_u$, and the result then follows from Theorem~\ref{thm: rough int as limit Riemann sums}.
\end{proof}

\subsection{Link to F{\"o}llmer integration}\label{subsec:Follmer integration}

In his seminal paper~\cite{Follmer1981}, F{\"o}llmer introduced a notion of pathwise integration based on the concept of quadratic variation, and derived a corresponding pathwise It{\^o} formula, which have proved to be useful tools in robust approaches to mathematical finance.

In the following we will write $\mathcal{B}[0,T]$ for the Borel $\sigma$-algebra on $[0,T]$.

\begin{definition}
  Let $S \in D([0,T];\R)$ and let $\mathcal{P}^n = \{0 = t^n_0 < t^n_1 < \cdots < t^n_{N_n} = T\}$, $n \geq 1$, be a sequence of partitions with vanishing mesh size. We say that $S$ has \emph{quadratic variation along $(\mathcal{P}^n)_{n \in\N}$ in the sense of F{\"o}llmer} if the sequence of measures $(\mu_n)_{n \in\N}$ on $([0,T],\mathcal{B}[0,T])$ defined by
  \begin{equation*}
    \mu_n := \sum_{k = 0}^{N_n - 1} |S_{t^n_k,t^n_{k+1}}|^2 \delta_{t^n_k},
  \end{equation*}
  converges weakly to a measure $\mu$, such that the map $t \mapsto [S]^c_t := \mu([0,t]) - \sum_{0 < s \leq t} |S_{s-,s}|^2$ is continuous and increasing. In this case we call the function $[S]$, given by $[S]_t = \mu([0,t])$, the \emph{quadratic variation} of $S$ along $(\mathcal{P}^n)_{n \in\N}$.

  We say that a path $S \in D([0,T];\R^d)$ has \textup{quadratic variation along $(\mathcal{P}^n)_{n \in\N}$ in the sense of F{\"o}llmer} if the condition above holds for $S^i$ and $S^i + S^j$ for every $(i,j)$, and in this case we write
  \begin{equation}\label{eq:defn Follmer bracket}
    [S^i,S^j] := \frac{1}{2} ([S^i + S^j] - [S^i] - [S^j]).
  \end{equation}
\end{definition}

Assuming that a path $S\in D([0,T];\R^d)$ has quadratic variation along $(\mathcal{P}^n)_{n \in\N}$ and $f\in C^2 (\R^d;\R)$, F{\"o}llmer showed that the limit
\begin{equation*}
  \int_0^T \D f(S_u) \dd S_u 
  := \lim_{n\to \infty }\sum_{[s,t]\in \mathcal{P}^n} \D f(S_s) S_{s,t}
\end{equation*}
exists and the resulting integral $\int_0^T \D f(S_u) \dd S_u$ satisfies a pathwise It{\^o} formula, see~\cite[TH{\'E}OR{\`E}ME]{Follmer1981}. Let us remark that the F{\"o}llmer integral~$\int_0^T \D f(S_u) \dd S_u$ is only well-defined for gradients~$\D f$ and not for general functions, as its existence is given by the corresponding pathwise It{\^o} formula. This result can also be explained via the language rough path theory; see Friz and Hairer \cite[Chapter~5.3]{FrizHairer2020}.

\smallskip

In the following we relate Property~\textup{(RIE)} to the existence of quadratic variation in the sense of F{\"o}llmer. To this end, for each $i = 1,\ldots,d$, we introduce 
$$
  S^{n,i}_t = S^i_T \1_{\{T\}}(t) + \sum_{k=0}^{N_n - 1} S^i_{t^n_k} \1_{[t^n_k,t^n_{k+1})}(t)
$$
and the discrete quadratic variation $\langle S^i, S^j \rangle^n$ by
\begin{equation*}
  \langle S^i, S^j \rangle^n_t = \sum_{k=0}^{N_n - 1} S^i_{t^n_k \wedge t,t^n_{k+1} \wedge t} S^j_{t^n_k \wedge t,t^n_{k+1} \wedge t}, \qquad t \in [0,T].
\end{equation*}

\begin{proposition}\label{prop RIE implies Follmer quad var}
  Let $S \in D([0,T];\R^d)$ and let $\mathcal{P}^n = \{0 = t^n_0 < t^n_1 < \cdots < t^n_{N_n} = T\}$, $n \in\N$, be a sequence of nested partitions with vanishing mesh size. The following conditions are equivalent:
  \begin{enumerate}
	\item[(i)] For every pair $(i,j)$, the Riemann sums $\int_0^{t}S^{n,i}_u \dd S^j_u + \int_0^t S^{n,j}_u \dd S^i_u$ converge uniformly to a limit, which we denote by $\int_0^t S^i_u \dd S^j_u + \int_0^t S^j_u \dd S^i_u$.
    \item[(ii)] For every pair $(i,j)$, the discrete quadratic variation $\langle S^i, S^j \rangle^n$ converges uniformly to a c{\`a}dl{\`a}g path, which we denote by $\langle S^i, S^j \rangle$.
    \item[(iii)] The path $S$ has quadratic variation along $(\mathcal{P}^n)_{n \in\N}$ in the sense of F{\"o}llmer.
  \end{enumerate}
  Moreover, if these conditions hold then the path $\langle S^i, S^j \rangle$ has finite total variation, and, for every $(i,j)$, we have that $[S^i,S^j] = \langle S^i,S^j \rangle$ and the equality
  \begin{equation}\label{eq:int by parts formula}
    S^i_t S^j_t = S^i_0 S^j_0 + \int_0^t S^i_u \dd S^j_u + \int_0^t S^j_u \dd S^i_u + \langle S^i, S^j \rangle_t
  \end{equation}
  holds for every $t \in [0,T]$.
\end{proposition}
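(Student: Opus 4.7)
The plan is to identify a \emph{discrete integration by parts identity} that will be the backbone of all three implications. For any $n \in \N$ and any $t \in [0,T]$, telescoping the product $S^i_t S^j_t - S^i_0 S^j_0$ along the partition $\mathcal{P}^n \cup \{t\}$ and applying the factorization $a_1 b_1 - a_0 b_0 = a_0(b_1 - b_0) + b_0(a_1 - a_0) + (a_1 - a_0)(b_1 - b_0)$ to each increment yields
\begin{equation*}
S^i_t S^j_t - S^i_0 S^j_0 = \int_0^t S^{n,i}_u \dd S^j_u + \int_0^t S^{n,j}_u \dd S^i_u + \langle S^i, S^j \rangle^n_t, \qquad n \in \N, \ t \in [0,T].
\end{equation*}
Since the left-hand side is independent of $n$, the equivalence (i) $\Leftrightarrow$ (ii) is immediate, and passing to the limit under either assumption yields the integration by parts formula \eqref{eq:int by parts formula}.

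For (ii) $\Rightarrow$ (iii), I would introduce the auxiliary measure $\tilde\mu_n$ on $[0,T]$ via $\tilde\mu_n([0,t]) := \langle S^i \rangle^n_t$, so that the uniform convergence from (ii) gives $\tilde\mu_n \Rightarrow \tilde\mu$ weakly, with $\tilde\mu([0,t]) = \langle S^i \rangle_t$. Since $\mu_n$ places its mass at $t^n_k$ rather than distributing it over $[t^n_k, t^n_{k+1}]$, the difference $|\int f \dd\mu_n - \int f \dd\tilde\mu_n|$ is controlled by the oscillation of the continuous test function $f$ on intervals of length at most $|\cP^n|$ times the uniformly bounded total mass $\langle S^i \rangle^n_T$, and hence vanishes as $n \to \infty$; thus $\mu_n \Rightarrow \tilde\mu$, so $[S^i] = \langle S^i \rangle$. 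To verify that $[S^i]^c$ is continuous, I would take $i = j$ in the IBP formula: the left-hand side $(S^i_t)^2 - (S^i_0)^2$ has jump $2 S^i_{s-}\Delta S^i_s + (\Delta S^i_s)^2$ at $s$, while $\int_0^\cdot S^i_u \dd S^i_u$, as a uniform limit of left-point Riemann sums along nested partitions that eventually include any given jump time $s$, has jump $S^i_{s-}\Delta S^i_s$ at $s$. Subtracting identifies $\Delta \langle S^i \rangle_s = (\Delta S^i_s)^2$, so $[S^i]^c$ has no jumps and is increasing as the continuous part of the distribution function of the positive measure $\mu$.

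For the converse (iii) $\Rightarrow$ (ii), I would exploit the identity $\mu_n([0,t]) - \langle S^i \rangle^n_t = (S^i_{t^n_m, t^n_{m+1}})^2 - (S^i_{t^n_m, t})^2$ where $t^n_m \leq t < t^n_{m+1}$; this vanishes as $n \to \infty$ at any continuity point of $S^i$. Combined with $\mu_n([0,t]) \to [S^i]_t$ at continuity points of $[S^i]$, which (by the continuity of $[S^i]^c$) coincide with continuity points of $S^i$, this gives pointwise convergence $\langle S^i \rangle^n_t \to [S^i]_t$ on a co-countable set. A Dini-type argument, using that $[S^i]$ is c\`adl\`ag increasing with jumps exactly $(\Delta S^i_s)^2$, and that an analogous jump analysis shows these jumps are captured correctly by $\langle S^i \rangle^n$ (for $n$ large enough that $s \in \mathcal{P}^n$ if needed, otherwise by the vanishing mesh), upgrades pointwise to uniform convergence on $[0,T]$. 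Polarization handles the cross terms and yields $\langle S^i, S^j \rangle = [S^i, S^j]$, from which finite total variation follows since $\langle S^i, S^j \rangle = \frac{1}{2}([S^i + S^j] - [S^i] - [S^j])$ is a difference of increasing c\`adl\`ag functions.

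The main technical obstacle I anticipate is handling the jump structure consistently in both directions: the continuity of $[S^i]^c$ in one implication, and the Dini-type uniform convergence upgrade in the other, both rely on a careful matching of the jumps of $\langle S^i \rangle$ with $\sum_{s \leq \cdot}(\Delta S^i_s)^2$. The integration by parts formula, obtained as the limit of the discrete identity, provides the decisive bridge that turns this jump matching into a clean algebraic comparison.
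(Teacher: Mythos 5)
Your proof of (i) $\Leftrightarrow$ (ii) and of the integration by parts formula \eqref{eq:int by parts formula} is exactly the paper's: the telescoping factorization
$S^i_t S^j_t - S^i_0 S^j_0 = \int_0^t S^{n,i}_u \dd S^j_u + \int_0^t S^{n,j}_u \dd S^i_u + \langle S^i, S^j \rangle^n_t$
with an $n$-independent left side, followed by passing to the limit, and the polarization identity $\langle S^i, S^j \rangle = \tfrac14(\langle S^i+S^j\rangle - \langle S^i-S^j\rangle)$ to get finite total variation. For the genuinely analytic content, the equivalence (ii) $\Leftrightarrow$ (iii), the paper simply cites Vovk's Propositions~3 and~4 for one-dimensional paths and then polarizes; you attempt a direct proof, which is a legitimately different route, but as written it contains two gaps.

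First, the auxiliary object $\tilde\mu_n$ with $\tilde\mu_n([0,t]) := \langle S^i \rangle^n_t$ is not a positive measure. On a partition interval $[t^n_m, t^n_{m+1})$ one has $\langle S^i\rangle^n_t = C^n_m + (S^i_t - S^i_{t^n_m})^2$, and the second summand is not monotone in $t$ (it starts at $0$ and can rise and fall with $S^i$). So ``$\tilde\mu_n \Rightarrow \tilde\mu$'' and the oscillation comparison $|\int f\dd\mu_n - \int f\dd\tilde\mu_n|$ are not meaningful as stated; your phrase ``distributing the mass over $[t^n_k,t^n_{k+1}]$'' describes a different (piecewise-linear-in-$t$) object than $\langle S^i\rangle^n$. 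You would either need to replace $\tilde\mu_n$ by an honest comparison measure (e.g.~atoms at right endpoints) and then separately bound the discrepancy $|\hat\mu_n([0,t]) - \langle S^i\rangle^n_t| = (S^i_{t^n_m,t})^2$, or argue directly with the distribution functions $\mu_n([0,\cdot])$ and $\langle S^i\rangle^n$ without ever invoking $\tilde\mu_n$ as a measure.

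Second, your Dini-type argument for (iii) $\Rightarrow$ (ii) leans on two things that are not available. One is that $\langle S^i\rangle^n$ is increasing in $t$, which, as above, it is not, so the classical P\'olya/Dini upgrade from pointwise to uniform convergence of distribution functions does not apply directly. The other is the phrase ``for $n$ large enough that $s \in \mathcal{P}^n$ if needed'': the hypothesis of this proposition is only that the $\mathcal{P}^n$ are nested with vanishing mesh, with no requirement that $J_S \subseteq \bigcup_n \mathcal{P}^n$; a jump time of $S$ may never be a partition point, and your argument needs to work uniformly through such times as well. (Your jump computation for $\int_0^\cdot S^i \dd S^i$ is actually fine even then, because uniform convergence of c\`adl\`ag functions does propagate jumps, but the Dini step is where the gap lies.) These are precisely the issues that Vovk's cited propositions resolve, and the paper's choice to defer to that reference rather than reprove it is what your proposal would need to replicate in full rigor.
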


\begin{proof}
  We have
  \begin{align*}
    &S^i_t S^j_t - S^i_0 S^j_0 = \sum_{k=0}^{N_n - 1} (S^i_{t^n_{k+1} \wedge t} S^j_{t^n_{k+1} \wedge t} - S^i_{t^n_k \wedge t} S^j_{t^n_k \wedge t})\\
    &= \sum_{k=0}^{N_n - 1} (S^i_{t^n_k \wedge t} S^j_{t^n_k \wedge t,t^n_{k+1} \wedge t} + S^j_{t^n_k \wedge t} S^i_{t^n_k \wedge t,t^n_{k+1} \wedge t}) + \sum_{k=0}^{N_n - 1} S^i_{t^n_k \wedge t,t^n_{k+1} \wedge t} S^j_{t^n_k \wedge t,t^n_{k+1} \wedge t}\\
    &= \int_0^t S^{n,i}_u \dd S^j_u + \int_0^t S^{n,j}_u \dd S^i_{u} + \langle S^i, S^j \rangle^n_t,
  \end{align*}
  from which it follows that conditions (i) and (ii) are equivalent, and that \eqref{eq:int by parts formula} then also holds. In this case, we also have that
  \begin{equation*}
    \langle S^i, S^j \rangle_t = \frac{1}{4} \Big(\langle S^i + S^j, S^i + S^j \rangle_t - \langle S^i - S^j, S^i - S^j \rangle_t\Big),
  \end{equation*}
  so that, as the difference of two non-decreasing functions, $\langle S^i, S^j \rangle$ has finite total variation.
  
  For one-dimensional paths~$S$, the equivalence of conditions (ii) and (iii) follows from \cite[Propositions~3 and~4]{Vovk2015}. The extension of this to $d$-dimensional paths $S$ and the equality $[S^i,S^j] = \langle S^i,S^j \rangle$ then follow from the polarization identity
  \begin{equation*}
    \langle S^i,S^j \rangle^n_t = \frac{1}{2} \Big(\langle S^i + S^j,S^i + S^j \rangle^n_t - \langle S^i,S^i \rangle^n_t - \langle S^j,S^j \rangle^n_t\Big)
  \end{equation*}
  and the definition of $[S^i,S^j]$ in \eqref{eq:defn Follmer bracket}.
\end{proof}

\begin{remark}\label{remark; Ito formula}
  As an immediate consequence of Proposition~\ref{prop RIE implies Follmer quad var}, we have that if a path $S$ satisfies~\textup{(RIE)} along $(\mathcal{P}^n)_{n \in \N}$, then it has quadratic variation along $(\mathcal{P}^n)_{n \in \N}$ in the sense of F{\"o}llmer, thus allowing one to apply all the known results regarding F{\"o}llmer integration.

In particular, if a vector field $f \colon \R^d \to \R$ is of class $C^3$, then, by Theorem~\ref{thm: rough int as limit Riemann sums}, the F\"ollmer integral $\int_0^\cdot \D f(S_u) \dd S_u$ coincides with the rough integral $\int_0^\cdot \D f(S_u) \dd \mathbf{S}_u$. We thus obtain the rough It{\^o} formula:
\begin{align*}
f(S_t)- f(S_0) = &\int_0^t \D f(S_u) \dd \mathbf{S}_u + \frac{1}{2} \int_0^t \D^2 f(S_u) \dd [S]_u\\
& + \sum_{0 < u \leq t} \Big(f(S_u) - f(S_{u-}) - \D f(S_{u-}) \Delta S_u - \frac{1}{2} \D^2 f(S_{u-}) (\Delta S_u \otimes \Delta S_u)\Big),
\end{align*}
which holds for every $t \in [0,T]$, where $[S] = ([S^i,S^j])_{1 \leq i, j \leq d}$ denotes the quadratic variation matrix, and $\Delta S_u := \lim_{s \to u,s < u} S_{s,u}$. We note that the formula above is precisely the It\^o formula for rough paths derived in Friz and Zhang \cite{Friz2018}.
\end{remark}

\section{Functionally generated trading strategies and their generalizations}\label{sec:trading strategies}

Given Property~\textup{(RIE)}, we can introduce a model-free framework for  continuous-time financial markets with a possibly infinite time horizon. In this section we shall verify that most relevant trading strategies from a practical perspective, such as delta-hedging strategies and functionally generated strategies, are admissible integrands for price paths satisfying Property~\textup{(RIE)}. Furthermore, the underlying rough integration allows us to deduce stability estimates for admissible strategies.

\subsection{Price paths and admissible strategies}

For a path $S \colon [0,\infty) \to \R^{d}$, we denote by $S|_{[0,T]}$ the restriction of $S$ to the interval $[0,T]$.

\begin{definition}\label{defn price path}
  For a fixed $p \in (2,3)$, we say that a path $S \in D([0,\infty);\R^d)$ is a \emph{price path}, if there exists a nested sequence of locally finite partitions $(\cP^n)_{n \in \N}$ of the interval $[0,\infty)$, with vanishing mesh size on compacts, such that, for all $T > 0$, the restriction $S|_{[0,T]}$ satisfies~\textup{(RIE)} with respect to $p$ and $(\cP^n([0,T]))_{n \in \N}$.

  We denote the family of all such price paths by $\Omega_p$.
\end{definition}

Note that the sequence of partitions $(\cP^n)_{n \in \N}$ may depend on the choice of price path $S \in \Omega_p$, consistent with the stochastic framework where this sequence will naturally be defined in terms of (probabilistic) stopping times.

Having fixed the model-free structure of the underlying price paths, we can introduce the class of admissible strategies and the corresponding capital process.

\begin{definition}\label{def: admissible strategy}
  Let $p \in (2,3)$ and let $S \in \Omega_p$ be a price path. We say that a path $\phi \colon [0,\infty) \to \R^d$ is an \emph{admissible strategy (with respect to $S$)}, if
  \begin{itemize}
    \item there exist $q \geq p$ and $r > 1$ with $2/p + 1/q > 1$ and $1/r = 1/p + 1/q$, such that for every $T > 0$, there exists a path $\phi' \colon [0,T] \to \cL(\R^d;\R^d)$ such that the pair $(\phi,\phi') \in \cV^{q,r}_S$ is a controlled path with respect to $S$ in the sense of Definition~\ref{def: controlled path},
    \item and $J_\phi \subseteq \cup_{n \in \N} \cP^n$, where $J_\phi$ is the set of jump times of $\phi$ in $(0,\infty)$, and $(\cP^n)_{n \in \N}$ is the sequence of partitions associated with the price path $S \in \Omega_p$.
  \end{itemize}
  We denote the space of all admissible strategies (with respect to $S$) by $\mathcal{A}_S$.

  We define the \emph{capital process} associated with $\phi$ and $S$ as the path $V^\phi(S) \colon [0,\infty) \to \R$ given by
  \begin{equation}\label{eq:defn capital process}
    V^\phi_t(S) := \lim_{n \to \infty} \sum_{k=0}^{N_n-1} \sum_{i=1}^d \phi^i_{t^n_k} (S^i_{t^n_{k+1} \wedge t} - S^i_{t^n_k \wedge t}), \qquad t \in [0,\infty),
  \end{equation}
  where $\cP^n = \{0 = t^n_0 < t^n_1 < \cdots < t^n_{N_n} = T\}$ is the sequence of partitions specified in Property~\textup{(RIE)}.
\end{definition}

\begin{remark}
In a semimartingale setting, as often used in classical mathematical finance, one usually considers left-continuous trading strategies $\phi$. In the present setting this assumption is not necessary, as the corresponding capital process $V^{\phi}(S)$, which, as we will see below, may be expressed as a rough integral, does not change when replacing $\phi$ by its left-continuous modification; see e.g.~\cite[Theorem~31]{Friz2017}. The reason for this is essentially the left-point Riemann sum construction of the integral. Indeed, suppose that $S$ has a jump at a time $t > 0$. The contribution to the capital process $V^\phi(S)$ at time $t$ is then given by $\lim_{s \to t,\hspace{1pt} s < t} \phi_s S_{s,t}$, which is invariant to the choice of $\phi$ or its left-continuous modification. Furthermore, we will see that $V^{\phi}(S)$ coincides with the classical stochastic It\^o integral, whenever both the rough and stochastic integrals are defined; see Section~\ref{subsect: rough integral vs stochastic integral} below.

The condition $J_\phi \subseteq \cup_{n \in \N} \cP^n$ means that one is allowed to use trading strategies whose jump points are included in the underlying sequence of partitions $(\mathcal{P}^n)_{n \in \N}$. On the one hand, many frequently used trading strategies, such as delta-hedging, satisfy this condition, and further examples will be discussed later in this section. On the other hand, the sequence of partitions $(\mathcal{P}^n)_{n \in \N}$ can be fixed a priori to allow for a desired class of trading strategies, e.g.~buy and hold strategies along the sequence of dyadic partitions.
\end{remark}

\begin{proposition}\label{prop: capital process}
  Let $p \in (2,3)$, let $S \in \Omega_p$ be a price path, and let $\phi \in \mathcal{A}_S$ be an admissible strategy (in the sense of Definition~\ref{def: admissible strategy}). Then, the capital process $V^\phi(S)$ as defined in \eqref{eq:defn capital process} exists as a locally uniform limit, and is actually given by
  \begin{equation}\label{eq:rough capital process}
    V^\phi_t(S) =  \int_0^t \phi_s \dd \bS_s, \qquad t \in [0,\infty),
  \end{equation}
  that is, the rough integral of the controlled path $(\phi,\phi') \in \cV^{q,r}_S$ against the rough path $\bS$ defined in Lemma~\ref{lem: SSA is rough path}.

  Moreover, given another price path $\tS \in \Omega_p$ and an admissible strategy $\tphi \in \mathcal{A}_{\tilde{S}}$ with respect to $\tS$, we have, for every $T > 0$, that
  \begin{align*}
    &|V^\phi_T(S) - V^{\tphi}_T(\tS)|\\
    &\leq C \Big((|\tphi_0| + \|\tphi,\tphi'\|_{\cV^{q,r}_{\tilde{S}}}) (1 + \|S\|_{p,[0,T]} + \|\tS\|_{p,[0,T]}) \|\bS;\tbS\|_{p,[0,T]}\\
    &\quad + (|\phi_0 - \tphi_0| + |\phi'_0 - \tphi'_0| + \|\phi' - \tphi'\|_{q,[0,T]} + \|R^\phi - R^{\tphi}\|_{r,[0,T]}) (1 + \|S\|_{p,[0,T]}) \|\bS\|_{p,[0,T]}\Big),
  \end{align*}
  where the constant $C$ depends on $p, q$ and $r$.
\end{proposition}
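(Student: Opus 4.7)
My plan is to derive both assertions of the proposition as essentially immediate corollaries of results already established in Section~\ref{sec:rough path integration}, namely Theorem~\ref{thm: rough int as limit Riemann sums} and Proposition~\ref{prop general rough int is cts}. The only real work is to check that the hypotheses match and to handle the fact that the capital process is defined on the infinite horizon $[0,\infty)$.

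For the identification $V^\phi_t(S) = \int_0^t \phi_s \dd \bS_s$, I would fix an arbitrary $T > 0$ and apply Theorem~\ref{thm: rough int as limit Riemann sums} on $[0,T]$ in the special case where the integrator is the rough path $\bS$ itself, i.e.\ taking $(G,G') = (S,\mathrm{id})$ with remainder $R^G = 0$, so that the rough integral reduces (via Remark~\ref{remark classical rough integral as special case}) to $\int_0^\cdot \phi_s \dd\bS_s$. The hypotheses are available from the definition of $\Omega_p$ and $\mathcal{A}_S$: the restriction $S|_{[0,T]}$ satisfies~\textup{(RIE)} with respect to $p$ and $(\cP^n([0,T]))_{n \in \N}$, the pair $(\phi,\phi') \in \crpZ$ with $Z = S$ is a controlled path, and the jump-time condition $J_\phi \cap (0,T] \subseteq \cup_n \cP^n([0,T])$ follows from the corresponding global condition in Definition~\ref{def: admissible strategy}. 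Theorem~\ref{thm: rough int as limit Riemann sums} then yields that the left-point Riemann sums in \eqref{eq:defn capital process} converge uniformly on $[0,T]$ to the rough integral, establishing \eqref{eq:rough capital process} together with local uniform convergence in $t \in [0,\infty)$ since $T$ was arbitrary.

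For the stability estimate, I would observe that, because both $V^\phi(S)$ and $V^{\tphi}(\tS)$ vanish at time $0$, one has the trivial bound
\begin{equation*}
  |V^\phi_T(S) - V^{\tphi}_T(\tS)| \leq \bigg\| \int_0^\cdot \phi_s \dd \bS_s - \int_0^\cdot \tphi_s \dd \tbS_s \bigg\|_{p,[0,T]}.
\end{equation*}
This is now precisely the quantity estimated by part~(i) of Proposition~\ref{prop general rough int is cts}, applied with the rough paths $\bS = (S,S,A)$ and $\tbS = (\tS,\tS,\tilde A)$ constructed via Lemma~\ref{lem: SSA is rough path}. With this identification, $X = Z = S$ and $\tX = \tZ = \tS$, so the terms $\|X\|_{p,[0,T]}$ and $\|\tZ\|_{p,[0,T]}$ in Proposition~\ref{prop general rough int is cts}(i) become $\|S\|_{p,[0,T]}$ and $\|\tS\|_{p,[0,T]}$, and all controlled-path norms are taken with respect to $S$ and $\tS$. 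The stated inequality then follows directly, with a constant depending only on $p,q,r$.

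I do not expect any substantial obstacle here; the only point requiring a moment of care is matching the notation of Definition~\ref{def:rough path} (where $X$ and $Z$ are a priori distinct) with the canonical lift $\bS = (S,S,A)$ in which they coincide, so that the two variation norms in Proposition~\ref{prop general rough int is cts}(i) collapse to norms of $S$ and $\tS$. Once this is observed, the proof is essentially a one-line invocation of each of the two preceding results.
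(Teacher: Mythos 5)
Your proposal is correct and follows essentially the same route as the paper's own proof: identify $\bS = (S,S,A)$ via Lemma~\ref{lem: SSA is rough path}, invoke Theorem~\ref{thm: rough int as limit Riemann sums} (with the special case of Remark~\ref{remark classical rough integral as special case}) on each compact interval $[0,T]$ to obtain the locally uniform convergence of the Riemann sums to the rough integral, and then bound $|V^\phi_T(S) - V^{\tphi}_T(\tS)|$ by the $p$-variation seminorm of the difference of the two integral processes before applying Proposition~\ref{prop general rough int is cts}(i). Your explicit remark that the bound by the $p$-variation seminorm uses the vanishing of both processes at $t=0$ is a small but accurate clarification that the paper leaves implicit.
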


\begin{remark}
  Recall from Remark~\ref{remark classical rough integral as special case} that the rough integral in \eqref{eq:rough capital process} is defined by the limit $\int_0^t \phi_s \dd \bS_s = \lim_{|\pi| \to 0} \sum_{[u,v] \in \pi} \phi_u S_{u,v} + \phi'_u A_{u,v}$, where the limit is taken over any sequence of partitions of the interval $[0,t]$ with vanishing mesh size. Here, $\phi_u$ and $S_{u,v}$ both take values in $\R^d$, and we interpret their multiplication as the Euclidean inner product. The derivative $\phi'_u$ takes values in $\cL(\R^d;\R^d)$, which we can also identify with $\cL(\R^{d \times d};\R)$. Since $A_{u,v} \in \R^{d \times d}$, the product $\phi'_u A_{u,v}$ also takes values in $\R$.
\end{remark}

\begin{proof}[Proof of Proposition~\ref{prop: capital process}]
  Let $T > 0$, and let $(\cP^n)_{n \in \N}$ be a sequence of nested partitions such that $p$, $(\cP^n)_{n \in \N}$ and $S$ satisfy Property~\textup{(RIE)} on the interval $[0,T]$. Recall from Property~\textup{(RIE)} the existence of the limit $\int_0^t S_u \otimes \d S_u$ for every $t \in [0,T]$. By Lemma~\ref{lem: SSA is rough path}, defining the function $A \colon \Delta_{[0,T]} \to \R^{d \times d}$ by
  $$
    A_{s,t} := \int_s^t S_u \otimes \d S_u - S_s \otimes S_{s,t},
  $$
  we have that the triplet $\bS = (S,S,A)$ is a c{\`a}dl{\`a}g rough path (in the sense of Definition~\ref{def:rough path}). Hence, the rough integral in \eqref{eq:rough capital process} is well-defined by Proposition~\ref{prop: general rough integral exists} (see also Remark~\ref{remark classical rough integral as special case}), and satisfies \eqref{eq:defn capital process} as a locally uniform limit by Theorem~\ref{thm: rough int as limit Riemann sums}.

  For the stability estimate we simply note that
  \begin{equation*}
    |V^\phi_T(S) - V^{\tphi}_T(\tS)| = \bigg|\int_0^T \phi_s \dd \bS_s - \int_0^T \tphi_s \dd \tbS_s\bigg| \leq \bigg\|\int_0^\cdot \phi_s \dd \bS_s - \int_0^\cdot \tphi_s \dd \tbS_s\bigg\|_{p,[0,T]}
  \end{equation*}
  and apply part~(i) of Proposition~\ref{prop general rough int is cts}.
\end{proof}

In the following we show that the most relevant trading strategies from a practical viewpoint belong to the class of admissible strategies in the sense of Definition~\ref{def: admissible strategy}.

\subsection{Functionally generated trading strategies}\label{subsec:functionally generated trading strategies}

Having fixed the set $\Omega_p$ of underlying price paths, we start by introducing functionally generated portfolios. For this purpose, for some $d_A \in \N$, we fix a c{\`a}dl{\`a}g path $A \colon [0,\infty) \to \R^{d_A}$ of locally bounded variation and assume that the jump times of $A$ belong to the union of the partitions $(\cP^n)_{n \in \N}$ appearing in Property~\textup{(RIE)}; that is, we assume that $J_A \subseteq \cup_{n \in \N} \cP^n$, where $J_A := \{t \in (0,\infty) : A_{t-,t} \neq 0\}$. The path~$A$ is supposed to include additional information pertaining to the market which a trader would like to include in their trading decisions. For instance, the components of the path $A = (A^1,\dots,A^{d_A})$ could include time $t \mapsto t$, the running maximum $t \mapsto \max_{u \in [0,t]} S^i_u$, or the integral $t \mapsto \int_0^t S^i_u\dd u$ for some (or all) $i=1,\dots,d$. A more detailed discussion on practical choices of the path $A$ can be found in Schied, Speiser and Voloshchenko \cite{Schied2018}.

\smallskip

For $\ell = d + d_A$, we denote by $C^{2}_b(\R^{\ell};\R^d)$ the space of twice continuously differentiable (in the Fr{\'e}chet sense) functions $f \colon \R^\ell \to \R^d$ such that $f$ and its derivatives up to order $2$ are uniformly bounded; that is
\begin{equation*}
C^{2}_b(\R^\ell;\R^d) := \{f \in C^2(\R^\ell;\R^d) : \| f\|_{C^{2}_b} < \infty\}
\end{equation*}
with
\begin{equation*}
\|f\|_{C^{2}_b} := \|f\|_{\infty} + \|\D f\|_{\infty} + \|\D^{2}f\|_{\infty}.
\end{equation*}

For $S \in \Omega_p$ we introduce the set $\mathcal{G}^{2}_S$ of all generalized functionally generated trading strategies $\phi^f$, which are all strategies of the form
\begin{equation}\label{eq:phi generated strategy}
  \phi_t^f = (f^1_t,\dots,f^{d}_t) := f(S_t,A_t), \qquad t \in [0,\infty),
\end{equation}
for some $f \in C^{2}_b(\R^\ell;\R^d)$. For $\phi^f \in \cG^2_S$ the corresponding capital process is given by
\begin{equation}\label{eq:capital process}
  V^{f}_t(S) = \lim_{n \to \infty} \sum_{k=0}^{N_n-1} \sum_{i=1}^d f^i_{t^n_k}(S^i_{t^n_{k+1}\wedge t}-S^i_{t^n_{k}\wedge t}), \qquad t \in [0,\infty).
\end{equation}

\begin{proposition}\label{prop: capital process well-defined}
  Let $p \in (2,3)$ and $S \in \Omega_p$, and let $\phi^f, \phi^{\tf} \in \cG^2_S$. Then, $\phi^f\in \mathcal{A}_S$ is an admissible strategy, and the capital process $(V^f_t(S))_{t\in [0,\infty)}$ given in \eqref{eq:capital process} is well-defined as a locally uniform limit in $t \in [0,\infty)$. Moreover, for every $T \in [0,\infty)$, we have the stability estimate
  \begin{equation}\label{eq:capital stability estimate}
    |V^{f}_T(S) - V^{\tf}_T(S)| \leq C \|f - \tf\|_{C^{2}_b} (1 + \|S\|_{p,[0,T]}^2 + \|A\|_{1,[0,T]}) (1 + \|S\|_{p,[0,T]}) \|\bS\|_{p,[0,T]},
  \end{equation}
  where the constant $C$ depends only on $p$, and the triplet $\bS = (S,S,A)$ is the c{\`a}dl{\`a}g rough path defined in Lemma~\ref{lem: SSA is rough path}.
\end{proposition}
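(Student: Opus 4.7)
The proof naturally splits into two tasks: (a) showing $\phi^f \in \mathcal{A}_S$ with a concrete Gubinelli derivative, from which the rough-integral representation of $V^f(S)$ follows via Theorem~\ref{thm: rough int as limit Riemann sums} and Proposition~\ref{prop: capital process}; (b) extracting the stability estimate from Proposition~\ref{prop: capital process} by controlling the relevant norms of the controlled-path data in terms of $\|f - \tf\|_{C^2_b}$.

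For (a), I would set $\phi' := \D_S f(S,A)$, i.e.~the derivative of $f$ in its first $d$ arguments evaluated along $(S,A)$. Fix any $q \geq p$ with $2/p + 1/q > 1$ (for instance $q = p$, giving $r = p/2$). A first-order Taylor expansion in both variables yields
\begin{equation*}
  R^{\phi^f}_{s,t} = \phi^f_{s,t} - \phi'_s S_{s,t} = \D_A f(S_s,A_s) A_{s,t} + \int_0^1 (1-\tau)\, \D^2 f(\xi^{s,t}_\tau)\bigl((S_{s,t},A_{s,t})^{\otimes 2}\bigr)\dd\tau,
\end{equation*}
where $\xi^{s,t}_\tau$ is the obvious convex combination. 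Using $f \in C^2_b$, the bounded variation of $A$, and the finite $p$-variation of $S$, one obtains $|\phi^f_{s,t}| \lesssim \|f\|_{C^2_b}(|S_{s,t}| + |A_{s,t}|)$, $|\phi'_{s,t}| \lesssim \|f\|_{C^2_b}(|S_{s,t}| + |A_{s,t}|)$, and $|R^{\phi^f}_{s,t}| \lesssim \|f\|_{C^2_b}(|A_{s,t}| + |S_{s,t}|^2 + |A_{s,t}||S_{s,t}| + |A_{s,t}|^2)$. Combining the Hölder estimate $\sum|S_{s,t}|^r|A_{s,t}|^r \leq \|S\|_p^r\|A\|_q^r$ (which is exactly why $1/r = 1/p + 1/q$) with the elementary bounds $\|A\|_q \leq \|A\|_1$ and $\|S\|_{2r} \leq \|S\|_p$ (since $2r \geq p$) then yields $\phi^f \in D^p$, $\phi' \in D^q$, and $R^{\phi^f} \in D^r_2$, so $(\phi^f,\phi') \in \mathcal{V}^{q,r}_S$. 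For the jump condition, continuity of $f$ gives $J_{\phi^f} \subseteq J_S \cup J_A$, and both are contained in $\cup_n\mathcal{P}^n$ by Property~\textup{(RIE)} applied on each $[0,T]$ (via Proposition~\ref{prop: uniform convergence}) and the standing assumption on $A$. Hence $\phi^f \in \mathcal{A}_S$, and Proposition~\ref{prop: capital process} gives both the existence of $V^f(S)$ as a locally uniform limit and the identification $V^f_t(S) = \int_0^t \phi^f_s\dd\bS_s$.

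For (b), I apply the stability estimate of Proposition~\ref{prop: capital process} with the same price path $S = \tS$ (so $\|\bS;\tbS\|_p = 0$), $\phi = \phi^f$ and $\tphi = \phi^{\tf}$. This reduces the task to bounding the four terms $|\phi_0 - \tphi_0|$, $|\phi'_0 - \tphi'_0|$, $\|\phi' - \tphi'\|_{q,[0,T]}$, $\|R^\phi - R^{\tphi}\|_{r,[0,T]}$ by $\|f - \tf\|_{C^2_b}$ times the expected factor. Setting $h := f - \tf$, the first two contribute at most $\|h\|_{C^2_b}$. For $\phi' - \tphi' = \D_S h(S,A)$, the mean value theorem gives $|(\phi' - \tphi')_{s,t}| \lesssim \|h\|_{C^2_b}(|S_{s,t}| + |A_{s,t}|)$, so $\|\phi' - \tphi'\|_q \leq \|\phi' - \tphi'\|_p \lesssim \|h\|_{C^2_b}(\|S\|_p + \|A\|_1)$. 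The same Taylor expansion as in (a), now applied to $h$, yields the analogous estimate $\|R^\phi - R^{\tphi}\|_r \lesssim \|h\|_{C^2_b}(\|A\|_1 + \|S\|_p^2 + \|S\|_p\|A\|_1 + \|A\|_1^2)$. Adding the four contributions bounds the prefactor by $\|h\|_{C^2_b}(1 + \|S\|_{p,[0,T]}^2 + \|A\|_{1,[0,T]})$ (up to a constant depending only on $p,q,r$), and multiplying by $(1 + \|S\|_{p,[0,T]})\|\bS\|_{p,[0,T]}$ from Proposition~\ref{prop: capital process} gives~\eqref{eq:capital stability estimate}.

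The technical heart of the argument is the Taylor-expansion bookkeeping that establishes $R^{\phi^f} \in D^r_2$ with norm controlled by $1 + \|S\|_p^2 + \|A\|_1$: the mixed term $|S_{s,t}||A_{s,t}|$ requires exactly the Hölder conjugation $1/r = 1/p + 1/q$ (thereby explaining the choice of the controlled-path exponents), and the $|A_{s,t}|$ term requires converting total variation into $r$-variation. Everything else is routine linear estimation and invocation of the previously proved propositions.
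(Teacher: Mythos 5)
Your overall structure is correct and matches the paper (identify $\phi' = \D_S f(S,A)$, verify the controlled-path conditions, use the jump inclusion $J_{\phi^f}\subseteq J_S\cup J_A$, then feed everything into Proposition~\ref{prop: capital process}), but the remainder estimate contains a genuine gap that prevents you from reaching the stated bound.

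You expand $R^{\phi^f}_{s,t}$ by a joint second-order Taylor expansion in $(S_{s,t},A_{s,t})$, which produces the pointwise bound $|R^{\phi^f}_{s,t}| \lesssim \|f\|_{C^2_b}(|A_{s,t}| + |S_{s,t}|^2 + |S_{s,t}||A_{s,t}| + |A_{s,t}|^2)$ and hence, after your H\"older bookkeeping, $\|R^{\phi^f}\|_{r} \lesssim \|f\|_{C^2_b}(\|A\|_1 + \|S\|_p^2 + \|S\|_p\|A\|_1 + \|A\|_1^2)$. You then assert that this can be bounded by $\|f\|_{C^2_b}(1 + \|S\|_p^2 + \|A\|_1)$ up to a constant depending on $p,q,r$, but that inequality is false: take $\|A\|_{1,[0,T]}$ large with $\|S\|_{p,[0,T]}$ fixed and the quadratic term $\|A\|_1^2$ dominates. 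The mixed and quadratic terms are a genuine obstruction, not a bookkeeping nuisance, and they stem directly from expanding to second order in the $A$-direction, which is wasteful since $A$ already has finite $1$-variation.

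The fix, which is exactly what the paper does, is to use an \emph{asymmetric} decomposition:
\begin{equation*}
R^{\phi^f}_{s,t} = \underbrace{\bigl[f(S_t,A_s) - f(S_s,A_s) - \D_S f(S_s,A_s)\,S_{s,t}\bigr]}_{=\ O(|S_{s,t}|^2)} + \underbrace{\bigl[f(S_t,A_t) - f(S_t,A_s)\bigr]}_{=\ O(|A_{s,t}|)},
\end{equation*}
where the first bracket is controlled by a second-order Taylor expansion in $S$ with $A$ frozen at $A_s$, and the second bracket needs only first-order accuracy in $A$. This yields the clean pointwise bound $|R^{\phi^f}_{s,t}|\leq\|f\|_{C^2_b}(|S_{s,t}|^2 + |A_{s,t}|)$ and hence $\|R^{\phi^f}\|_{p/2,[0,T]}\lesssim\|f\|_{C^2_b}(\|S\|_p^2+\|A\|_1)$ directly, with no cross or quadratic $\|A\|_1$ terms; moreover it makes the choice $q=p$, $r=p/2$ work without needing H\"older conjugation at all. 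The same split applied to $h=f-\tf$ and to $\D_S h$ gives the required bounds on $\|R^\phi-R^{\tilde\phi}\|_{p/2}$ and $\|\phi'-\tilde\phi'\|_p$, and then Proposition~\ref{prop: capital process} with $\tS=S$ gives \eqref{eq:capital stability estimate} as you intended.
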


\begin{proof}
  \textit{Admissibility:} Let $\phi = \phi^f \in \cG^2_S$ be a functionally generated strategy $\phi=(\phi^1,\dots,\phi^d)$ of the form in \eqref{eq:phi generated strategy} for some $f \in C^{2}_b(\R^\ell;\R^d)$. Fix a $T \in [0,\infty)$. We claim that $(\phi,\phi') \in \cV^{p,\frac{p}{2}}_S$ is a controlled path with respect to $S$ in the sense of Definition~\ref{def: controlled path} (with $q = p$ and $r = p/2$), where
  $$
    \phi'_t := \D_S f(S_t,A_t), \qquad t \in [0,T],
  $$
  and $\D_Sf$ denotes the derivative of $f$ with respect to its first $d$ components. To see this, we first note that
  \begin{equation*}
    |\phi'_{s,t}| = |\D_S f(S_t,A_t) - \D_S f(S_s,A_s)| \leq \|f\|_{C^2_b} (|S_{s,t}| + |A_{s,t}|),
  \end{equation*}
  so that
  \begin{equation}\label{eq:phi' bound}
    \|\phi'\|_{p,[0,T]} \lesssim \|f\|_{C^2_b} (\|S\|_{p,[0,T]} + \|A\|_{1,[0,T]}) < \infty,
  \end{equation}
  and hence $\phi' \in D^p([0,T];\cL(\R^d;\R^d))$. We moreover have that
  \begin{align*}
    R^\phi_{s,t} &:= \phi_{s,t} - \phi'_s S_{s,t}\\
    &= f(S_t,A_t) - f(S_s,A_s) - \D_S f(S_s,A_s) S_{s,t}\\
    &= f(S_t,A_s) - f(S_s,A_s) - \D_S f(S_s,A_s) S_{s,t} + f(S_t,A_t) - f(S_t,A_s)\\
    &= \int_0^1 \Big(\D_S f(S_s + \tau S_{s,t},A_s) - \D_S f(S_s,A_s)\Big) S_{s,t} \dd \tau + f(S_t,A_t) - f(S_t,A_s),
  \end{align*}
  so that $|R^\phi_{s,t}| \leq \|f\|_{C^{2}_b} (|S_{s,t}|^2 + |A_{s,t}|)$. It follows that
  \begin{equation}\label{eq:R^phi bound}
    \|R^\phi\|_{\frac{p}{2},[0,T]} \lesssim \|f\|_{C^{2}_b} (\|S\|_{p,[0,T]}^2 + \|A\|_{1,[0,T]}) < \infty,
  \end{equation}
  so that $R^\phi \in D^{p/2}(\Delta_{[0,T]};\R^d)$, and thus the conditions of Definition~\ref{def: controlled path} are satisfied. Thus, by Proposition~\ref{prop: general rough integral exists} (and Remark~\ref{remark classical rough integral as special case}), we have the existence for each $t \in [0,T]$ of the ($\R$-valued) rough integral
  $$
    \int_0^t \phi_s \dd \bS_s = \lim_{|\cP| \to 0} \sum_{[u,v] \in \cP} \phi_u S_{u,v} + \phi'_u A_{u,v}.
  $$

For a given path $F$, let $J_F = \{t \in (0,T] : F_{t-,t} \neq 0\}$ denote the jump times of $F$. It follows from Property~\textup{(RIE)} and Proposition~\ref{prop: uniform convergence} that $J_S \subseteq \cup_{n \in \N} \cP^n$. Since we also assumed that $J_A \subseteq \cup_{n \in \N} \cP^n$, it then follows from \eqref{eq:phi generated strategy} that $J_\phi \subseteq \cup_{n \in \N} \cP^n$. Thus, by Theorem~\ref{thm: rough int as limit Riemann sums}, we have that
  \begin{equation*}
    \int_0^t \phi_s \dd \bS_s = \lim_{n \to \infty} \sum_{k=0}^{N_n-1} \phi_{t^n_k} (S_{t^n_{k+1} \wedge t} - S_{t^n_{k} \wedge t}) = \lim_{n \to \infty} \sum_{k=0}^{N_n-1} \sum_{i=1}^d f^i_{t^n_k} (S^i_{t^n_{k+1} \wedge t} - S^i_{t^n_{k} \wedge t}) = V^f_t(S),
  \end{equation*}
  and that this limit is uniform in $t \in [0,T]$.

  \textit{Stability estimate:} Let $\phi$ and $\tphi$ be the strategies generated by $f$ and $\tf$ respectively, as defined in \eqref{eq:phi generated strategy}. By part~(i) of Proposition~\ref{prop general rough int is cts}, we have the estimate
  \begin{align}\label{eq:estimate phi phi' R^phi}
    \begin{split}
    &|V^{f}_T(S) - V^{\tf}_T(S)|\\
    &= \bigg|\int_0^T \phi_s \dd \bS_s - \int_0^T \tphi_s \dd \bS_s\bigg| \leq \bigg\|\int_0^\cdot \phi_s \dd \bS_s - \int_0^\cdot \tphi_s \dd \bS_s\bigg\|_{p,[0,T]}\\
    &\lesssim (|\phi_0 - \tphi_0| + |\phi'_0 - \tphi'_0| + \|\phi' - \tphi'\|_{p,[0,T]} + \|R^\phi - R^{\tphi}\|_{\frac{p}{2},[0,T]}) (1 + \|S\|_{p,[0,T]}) \|\bS\|_{p,[0,T]}.
    \end{split}
  \end{align}
  As above, here
  $$
    \phi'_t = \D_S f(S_t,A_t) \qquad \text{and} \qquad R^\phi_{s,t} = \phi_{s,t} - \phi'_s S_{s,t},
  $$
  with $\tphi'$ and $R^{\tphi}$ defined similarly. We will now aim to estimate each term on the right-hand side of~\eqref{eq:estimate phi phi' R^phi}.

  We have that
  \begin{equation}\label{eq:est phi 1}
    |\phi_0 - \tphi_0| = |f(S_0,A_0) - \tf(S_0,A_0)| \leq \|f - \tf\|_\infty \leq \|f - \tf\|_{C^{2}_b},
  \end{equation}
  and similarly
  \begin{equation}\label{eq:est phi 2}
    |\phi'_0 - \tphi'_0| = |\D_S f(S_0,A_0) - \D_S \tf(S_0,A_0)| 
    \leq \|\D_S f - \D_S \tf\|_\infty \leq \|f - \tf\|_{C^{2}_b}.
  \end{equation}
  Next, for $[s,t] \subseteq [0,T]$, we compute
  \begin{align*}
    (\phi' - \tphi')_{s,t}
    &= \D_S f(S_t,A_t) - \D_S f(S_s,A_s) - \D_S \tf(S_t,A_t) + \D_S \tf(S_s,A_s)\\
    &= \D_S f(S_t,A_s) - \D_S f(S_s,A_s) - \D_S \tf(S_t,A_s) + \D_S \tf(S_s,A_s)\\
    &\quad + \D_S f(S_t,A_t) - \D_S f(S_t,A_s) - \D_S \tf(S_t,A_t) + \D_S \tf(S_t,A_s)\\
    &= \int_0^1 \Big(\D^2_{SS} f(S_s + \tau S_{s,t},A_s) - \D^2_{SS} \tf(S_s + \tau S_{s,t},A_s)\Big) S_{s,t} \dd \tau\\
    &\quad + \int_0^1 \Big(\D^2_{SA}f(S_t,A_s + \tau A_{s,t}) - \D^2_{SA} \tf(S_t,A_s + \tau A_{s,t})\Big) A_{s,t} \dd \tau,
  \end{align*}
  so that
  \begin{equation*}
    |(\phi' - \tphi')_{s,t}| 
    \leq \|\D^2_{SS}f - \D^2_{SS}\tf\|_\infty |S_{s,t}| + \|\D^2_{SA}f - \D^2_{SA}\tf\|_\infty |A_{s,t}|
    \leq \|f - \tf\|_{C^{2}_b} (|S_{s,t}| + |A_{s,t}|),
  \end{equation*}
  and thus
  \begin{equation}\label{eq:est phi 3}
    \|\phi' - \tphi'\|_{p,[0,T]} \lesssim \|f - \tf\|_{C^{2}_b} (\|S\|_{p,[0,T]} + \|A\|_{1,[0,T]}).
  \end{equation}
  Finally, we have that
  \begin{align*}
    &(R^{\phi} - R^{\tphi})_{s,t}\\
    &= f(S_t,A_t) - f(S_s,A_s) - \D_S f(S_s,A_s) S_{s,t} - \tf(S_t,A_t) + \tf(S_s,A_s) + \D_S \tf(S_s,A_s) S_{s,t}\\
    &= f(S_t,A_s) - f(S_s,A_s) - \D_S f(S_s,A_s) S_{s,t} - \tf(S_t,A_s) + \tf(S_s,A_s) + \D_S \tf(S_s,A_s) S_{s,t}\\
    &\quad + f(S_t,A_t) - f(S_t,A_s) - \tf(S_t,A_t) + \tf(S_t,A_s)\\
    &= \int_0^1 \int_0^1 \Big(\D^2_{SS}f(S_s + \tau_1 \tau_2 S_{s,t},A_s) - \D^2_{SS}\tf(S_s + \tau_1 \tau_2 S_{s,t},A_s)\Big) S_{s,t}^{\otimes 2} \tau_1 \dd \tau_2 \dd \tau_1\\
    &\quad + \int_0^1 \Big(\D_A f(S_t,A_s + \tau A_{s,t}) - \D_A \tf(S_t,A_s + \tau A_{s,t})\Big) A_{s,t} \dd \tau,
  \end{align*}
  so that
  \begin{equation*}
    |(R^{\phi} - R^{\tphi})_{s,t}| 
    \leq \|\D^2_{SS}f - \D^2_{SS}\tf\|_\infty |S_{s,t}|^2 + \|\D_A f - \D_A \tf\|_\infty |A_{s,t}| \leq \|f - \tf\|_{C^2_b} (|S_{s,t}|^2 + |A_{s,t}|),
  \end{equation*}
  and hence
  \begin{equation}\label{eq:est phi 4}
    \|R^{\phi} - R^{\tphi}\|_{\frac{p}{2},[0,T]} \lesssim \|f - \tf\|_{C^2_b} (\|S\|_{p,[0,T]}^2 + \|A\|_{1,[0,T]}).
  \end{equation}
  Substituting \eqref{eq:est phi 1}, \eqref{eq:est phi 2}, \eqref{eq:est phi 3} and \eqref{eq:est phi 4} into \eqref{eq:estimate phi phi' R^phi}, we deduce that the estimate in \eqref{eq:capital stability estimate} holds.
\end{proof}

\subsection{Path-dependent functionally generated trading strategies}

The functionally generated trading strategies considered in Section~\ref{subsec:functionally generated trading strategies} could depend on the past prices only through a process of locally finite variation. In some contexts it is beneficial to work with trading strategies possessing a more general path-dependent structure; see e.g.~Schied, Speiser and Voloshchenko \cite{Schied2016,Schied2018} for more detailed discussions in this direction. A common way to treat path-dependent and non-anticipating trading strategies is the calculus initiated by Dupire \cite{Dupire2019} and Cont and Fourni\'e \cite{Cont2010}. For the sake of brevity we recall here only the essential definitions and refer to Ananova \cite[Section~3.1]{Ananova2020} and \cite{Cont2010} for full details.

A functional $F \colon [0,T] \times D([0,T];\mathbb{R}^d) \rightarrow \mathbb{R}^d$ is called non-anticipative if
\begin{equation*}
F(t,S) = F(t,S_{\cdot \wedge t}) \quad \text{for all} \quad S \in D([0,T];\mathbb{R}^d).
\end{equation*}
As usual, the non-anticipative functionals are defined on the space of stopped paths, defined as the equivalence class in $[0,T] \times D([0,T];\mathbb{R}^d)$ with respect to the following equivalence relation:
\begin{equation*}
(t,S) \sim (t',S') \Longleftrightarrow t = t' \text{ and } S_{\cdot \wedge t} = S'_{\cdot \wedge t'}.
\end{equation*}
The resulting space $\Lambda_T^d$ is equipped with the distance
\begin{equation*}
d_\infty((t,S),(t',S')) := |t - t'| + \sup_{u \in [0, T]} |S_{u \wedge t} - S'_{u \wedge t'}|,
\end{equation*}
which turns $(\Lambda_T^d, d_\infty)$ into a complete metric space. We introduce the following spaces:
\begin{itemize}
\item $\mathbb{C}^{0,0}_l(\Lambda_T^d)$ is the space of left-continuous functionals $F \colon \Lambda_T^d \to \R^d$, i.e.~for all $(t,S) \in \Lambda_T^d$ and $\epsilon > 0$, there exists $\nu > 0$ such that, for all $(t', {S}') \in \Lambda_T^d$,
\begin{equation*}
t' < t \text{ and } d_\infty((t,S),(t',S')) < \nu \Longrightarrow |F(t,S) - F(t',S')| < \epsilon.
\end{equation*}
\item $\mathbb{B}(\Lambda_T^d)$ is the space of all boundedness-preserving functionals $F \colon \Lambda_T^d \to \R^d$, i.e.~for every compact subset $K \subset \mathbb{R}^d$ and for every $t_0 \in [0,T]$, there exists $C > 0$ such that, for all $t \in [0,t_0]$ and $(t,S) \in \Lambda_T^d$,
\begin{align*}
S([0,t]) \subseteq K \Longrightarrow |F(t,S)| < C.
\end{align*}
\item $\mathrm{Lip}(\Lambda_T^d,d_{\infty})$ is the space of all Lipschitz continuous functionals $F \colon \Lambda_T^d \to \R^d$, i.e.~there exists $C > 0$ such that, for all $(t,S), (t',S') \in \Lambda^d_T$,
\[|F(t,S) - F(t',S')| \leq C d_{\infty}((t,S),(t',S')).\]
\end{itemize}

We define $\mathbb{C}^{1,1}_b(\Lambda_T^d)$ as the set of non-anticipative functionals $F \colon \Lambda_T^d \to \mathbb{R}$ which are:
\begin{itemize}
\item horizontally differentiable, i.e.~for all $(t,S) \in \Lambda_T^d$,
\begin{equation*}
\mathcal{D}F(t,S) = \lim_{h \downarrow 0} \frac{F(t + h,S_{\cdot \wedge t}) - F(t,S_{\cdot \wedge t})}{h}
\end{equation*}
exists, and $\mathcal{D}F$ is continuous at fixed times,
\item and vertically differentiable, i.e.~for all $(t,S) \in \Lambda_T^d$, $\nabla_{x} F(t,S) = (\partial_i F(t,S))_{i = 1, \dots, d}$, with
\begin{equation*}
\partial_i F(t,S) = \lim_{h \rightarrow 0} \frac{F(t,S_{\cdot \wedge t} + h e_i \1_{[t,T]}) - F(t,S_{\cdot \wedge t})}{h},
\end{equation*}
exists, where $(e_i)_{i = 1, \ldots, d}$ is the canonical basis of $\mathbb{R}^d$, and $\nabla_{x} F \in \mathbb{C}_l^{0,0}(\Lambda_T^d)$,
\item and such that $\mathcal{D}F, \nabla_{x} F \in \mathbb{B}(\Lambda_T^d)$.
\end{itemize}

\begin{corollary}
If $F \in \mathbb{C}^{1,1}_b(\Lambda^d_T)$ with $F$ and $\nabla_{x} F$ in $\mathrm{Lip}(\Lambda^d_T, d_{\infty})$, and $S \in \Omega_p$, then the path-dependent functionally generated trading strategy $F(\cdot,S)$ is an admissible strategy in the sense of Definition~\ref{def: admissible strategy}.
\end{corollary}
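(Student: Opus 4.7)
The plan is to set $\phi_t := F(t, S)$ with candidate Gubinelli derivative $\phi'_t := \nabla_x F(t, S)$, and verify the two requirements of Definition~\ref{def: admissible strategy}: that the pair $(\phi, \phi')$ is a controlled path in $\cV^{p, p/2}_S$ (taking $q = p$, so $r = p/2$), and that $J_\phi \subseteq \bigcup_{n \in \N} \cP^n$. The jump condition follows easily: since $F$ is Lipschitz in $d_\infty$ and $S$ is c\`adl\`ag, $\phi$ is c\`adl\`ag with $J_\phi \subseteq J_S$, and Property~\textup{(RIE)} together with Proposition~\ref{prop: uniform convergence} ensures $J_S \subseteq \bigcup_n \cP^n$.

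Next I would establish the $p$-variation bounds for $\phi$ and $\phi'$. The Lipschitz continuity of $F$ and $\nabla_x F$ in $d_\infty$ yields, for $0 \leq s \leq t \leq T$,
\begin{equation*}
|\phi_{s, t}| + |\phi'_{s, t}| \leq C \bigl( (t - s) + \sup_{u \in [s, t]} |S_{s, u}| \bigr),
\end{equation*}
which combined with the superadditivity of the control $\omega_S(s, t) := \|S\|_{p, [s, t]}^p$ gives $\|\phi\|_{p, [0, T]} + \|\phi'\|_{p, [0, T]} \lesssim T + \|S\|_{p, [0, T]}$, so that $\phi \in D^p([0, T]; \R^d)$ and $\phi' \in D^p([0, T]; \cL(\R^d; \R^d))$.

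The core step is bounding the remainder $R^\phi_{s, t} := \phi_{s, t} - \phi'_s S_{s, t}$ in $p/2$-variation. I would introduce the auxiliary path $\bar{S}^{s, t} := S_{\cdot \wedge s} + S_{s, t} \1_{[s, T]}$, which coincides with $S$ on $[0, s)$ and equals $S_t$ on $[s, T]$, and split $R^\phi_{s, t}$ into three pieces. A horizontal contribution $F(t, \bar{S}^{s, t}) - F(s, \bar{S}^{s, t}) = \int_s^t \mathcal{D}F(u, \bar{S}^{s, t}_{\cdot \wedge u}) \dd u$, which is of size $O(t - s)$ since $\mathcal{D}F$ is bounded on compacts. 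A second-order vertical contribution $F(s, \bar{S}^{s, t}) - F(s, S_{\cdot \wedge s}) - \nabla_x F(s, S_{\cdot \wedge s}) S_{s, t} = \int_0^1 \bigl[ \nabla_x F(s, S_{\cdot \wedge s} + \tau S_{s, t} \1_{[s, T]}) - \nabla_x F(s, S_{\cdot \wedge s}) \bigr] \dd \tau \cdot S_{s, t}$, which by the Lipschitz continuity of $\nabla_x F$ is of size $O(|S_{s, t}|^2)$. And a ``path-change'' piece $F(t, S_{\cdot \wedge t}) - F(t, \bar{S}^{s, t})$, measuring the discrepancy between the genuine path $S$ and the straight-line extension $\bar{S}^{s, t}$ on $(s, t)$.

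The main obstacle is this last piece: the naive Lipschitz bound $|F(t, S_{\cdot \wedge t}) - F(t, \bar{S}^{s, t})| \lesssim \sup_{u \in [s, t]} |S_{u, t}| \lesssim \omega_S(s, t)^{1/p}$ is insufficient, since the sums $\sum \omega_S(s, t)^{1/2}$ are not bounded uniformly in the partition. To obtain a genuine second-order bound of the form $|F(t, S_{\cdot \wedge t}) - F(t, \bar{S}^{s, t})| \lesssim |S_{s, t}|^2 + (t - s)$, the plan is to invoke the refined functional Taylor expansion developed by Ananova~\cite[Section~3.1]{Ananova2020}, which exploits the Lipschitz continuity of $\nabla_x F$ via a careful discretization in the spirit of the functional It\^o formula. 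Combined with the estimates on the other two pieces, this yields $\|R^\phi\|_{p/2, [0, T]}^{p/2} \lesssim T^{p/2} + \|S\|_{p, [0, T]}^p$, completing the verification that $(\phi, \phi') \in \cV^{p, p/2}_S$, and hence $\phi \in \cA_S$.
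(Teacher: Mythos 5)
Your overall plan coincides with the paper's: take $\phi'_t := \nabla_x F(t,S)$ as the Gubinelli derivative, invoke Ananova's lemma (Lemma~5.12 of~\cite{Ananova2019}; see also Lemma~3.7 of~\cite{Ananova2020}) for the controlled-path structure, and use the Lipschitz continuity of $F$ together with Proposition~\ref{prop: uniform convergence} to place $J_\phi \subseteq J_S \subseteq \bigcup_n \cP^n$. The paper's own proof is exactly this two-sentence citation, so in its logical outline your argument agrees with it.

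Where you go beyond the paper is the sketched estimate for $R^\phi$, and there the three-piece decomposition is not faithful to how Ananova's lemma is actually proved. Your bounds for the horizontal piece ($O(t-s)$) and the second-order vertical piece ($O(|S_{s,t}|^2)$ via Lipschitz continuity of $\nabla_x F$) are correct, but the asserted pointwise bound $|F(t,S_{\cdot\wedge t}) - F(t,\bar S^{s,t})| \lesssim |S_{s,t}|^2 + (t-s)$ on the ``path-change'' piece is not something the cited lemma delivers, and there is no apparent way to obtain it from the hypotheses: the replacement of $\bar S^{s,t}$ by $S_{\cdot\wedge t}$ perturbs the path on all of $[s,t)$, and the Lipschitz structure only controls this term via $\sup_{u\in[s,t)}|S_{u,t}|$, i.e.\ via the oscillation of $S$ over $[s,t]$ rather than the endpoint increment $|S_{s,t}|$. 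Ananova's argument does not isolate such a term at all: it Taylor-expands $u\mapsto F(u,S_{\cdot\wedge u})$ along a partition of $[s,t]$ with vanishing mesh, accumulates the horizontal and second-order vertical errors subinterval by subinterval, and controls the resulting first-order (Young-type) sum using the Lipschitz modulus of $\nabla_x F(\cdot,S)$; the final bound on $R^\phi_{s,t}$ then naturally involves $\|S\|_{p,[s,t]}$ and $(t-s)$, which is superadditive after raising to the power $p/2$ and is exactly what the controlled-path structure requires. Since you explicitly defer to Ananova for the hard step, your conclusion stands, but as a self-contained sketch the three-piece decomposition would not close; to make it rigorous you would need to replace it with the discretization-based Taylor expansion.
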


\begin{proof}
That $(F(\cdot,S),\nabla_{x} F(\cdot,S))$ is a controlled path with respect to $S$ is an immediate consequence of \cite[Lemma~5.12]{Ananova2019}; see also \cite[Lemma~3.7]{Ananova2020}. The admissibility condition regarding the jump times of $F(\cdot,S)$ is ensured by the Lipschitz continuity of $F$.
\end{proof}

\begin{remark}
The standard examples of sufficiently regular path-dependent functionals are functionals which depend on the running maximum or on a notion of the average of the underlying path; see e.g.~Ananova \cite{Ananova2019}. Further examples include:
\begin{enumerate}[(i)]
\item $F(t,S_{\cdot \wedge t}) := \int_0^t \psi(s,S_{\cdot \wedge s-}) \dd [S]_s$,
\item $F(t,S_{\cdot \wedge t}) := \int_0^t \nabla_x f(s,S_{\cdot \wedge s}) \dd S_s$,
\item $F(t,S_{\cdot \wedge t}) := \sum_{i=1}^d \int_0^t (S_t^i - S_s^i) f_i(S^i_s) \dd S^i_s - \int^t_0 f_i(S^i_s) \dd [S^i]_s$,
\end{enumerate}
for a twice differentiable function $f = (f_1, \ldots, f_d) \colon \R^d \to \R^d$ and a left-continuous and locally bounded function $\psi \colon \Lambda_T^d \to \R^{d \times d}$; see Chiu and Cont \cite[Example~4.18]{Chiu2022}.
\end{remark}

\subsection{Cover's universal portfolio}

While functionally generated trading strategies are most prominent in the literature regarding hedging and control problems in mathematical finance, various other trading strategies with desirable properties have been considered. One example coming from portfolio theory is Cover's universal portfolio, as introduced in \cite{Cover1991}. The basic idea is to invest, not according to one specific trading strategy, but according to a mixture of all admissible strategies. Following Cuchiero, Schachermayer and Wong \cite{Cuchiero2019}, we introduce here a model-free analogue of Cover's universal portfolio.

\smallskip

Let $\mathcal{Z}$ be a Borel measurable subset of $C^2_b(\R^d;\R^d)$, and suppose that $\nu$ is a probability measure on $\mathcal{Z}$. A model-free version of Cover's universal portfolio $\phi^{\nu}$ is then given by
\begin{equation}\label{eq:defn Cover's portfolio}
  \phi^{\nu}_t := \int_{\mathcal{Z}} \phi^f_t \dd \nu(f), \qquad t \in [0,\infty),
\end{equation}
where $\phi^f = f(S)$ is the portfolio generated by $f$, for some fixed $S \in \Omega_p$.

\begin{lemma}
  Let $S \in \Omega_p$ and let $\nu$ be a probability measure on $\mathcal{Z}$, as above. If
  $$
    \int_{\mathcal{Z}} \|f\|_{C^2_b} \dd \nu(f) < \infty,
  $$
  then Cover's universal portfolio $\phi^{\nu}$, as defined in \eqref{eq:defn Cover's portfolio}, is an admissible strategy in the sense of Definition~\ref{def: admissible strategy}.
\end{lemma}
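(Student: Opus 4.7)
The natural plan is to define the Gubinelli derivative of $\phi^\nu$ as the $\nu$-average of the derivatives of the generating functions, namely
\begin{equation*}
(\phi^\nu)'_t := \int_{\mathcal{Z}} \D f(S_t) \dd \nu(f), \qquad t \in [0,T],
\end{equation*}
and then verify directly that $(\phi^\nu,(\phi^\nu)') \in \cV^{p,p/2}_S$ is a controlled path (so one takes $q=p$ and $r=p/2$, just as in Proposition~\ref{prop: capital process well-defined}). The first task is to check that both $\phi^\nu_t$ and $(\phi^\nu)'_t$ are well-defined as Bochner integrals for each $t$: since $f \in C^2_b(\R^d;\R^d)$ implies $|f(S_t)| + |\D f(S_t)| \leq 2\|f\|_{C^2_b}$, the hypothesis $\int_{\mathcal{Z}} \|f\|_{C^2_b} \dd \nu(f) < \infty$ provides an integrable majorant, once Borel measurability of the map $f \mapsto f(S_t)$ on $\mathcal{Z}$ is noted (which follows from the continuity of evaluation functionals on $C^2_b$).

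The second task is to relate the remainder $R^{\phi^\nu}_{s,t} := \phi^\nu_{s,t} - (\phi^\nu)'_s S_{s,t}$ to the remainders of the individual generating strategies. By linearity of the Bochner integral,
\begin{equation*}
R^{\phi^\nu}_{s,t} = \int_{\mathcal{Z}} \big(f(S_t) - f(S_s) - \D f(S_s) S_{s,t}\big) \dd \nu(f) = \int_{\mathcal{Z}} R^{\phi^f}_{s,t} \dd \nu(f).
\end{equation*}
A parallel identity gives $(\phi^\nu)'_{s,t} = \int_{\mathcal{Z}} (\phi^f)'_{s,t} \dd \nu(f)$. From the proof of Proposition~\ref{prop: capital process well-defined} (with $A \equiv 0$), for each $f \in \mathcal{Z}$ we have the pointwise bounds $|(\phi^f)'_{s,t}| \leq \|f\|_{C^2_b} |S_{s,t}|$ and $|R^{\phi^f}_{s,t}| \leq \|f\|_{C^2_b} |S_{s,t}|^2$.

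The third and main step is to control the $p$- and $\frac{p}{2}$-variation of these averaged objects. The key tool is Minkowski's integral inequality applied partition-wise: for any partition $\mathcal{P}$ of $[0,T]$,
\begin{equation*}
\bigg(\sum_{[u,v] \in \mathcal{P}} |(\phi^\nu)'_{u,v}|^p\bigg)^{\frac{1}{p}} \leq \int_{\mathcal{Z}} \bigg(\sum_{[u,v] \in \mathcal{P}} |(\phi^f)'_{u,v}|^p\bigg)^{\frac{1}{p}} \dd \nu(f) \leq \|S\|_{p,[0,T]} \int_{\mathcal{Z}} \|f\|_{C^2_b} \dd \nu(f),
\end{equation*}
and taking the supremum over $\mathcal{P}$ yields $\|(\phi^\nu)'\|_{p,[0,T]} < \infty$. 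The identical argument applied to $R^{\phi^\nu}$, with exponent $p/2$ in place of $p$, yields $\|R^{\phi^\nu}\|_{p/2,[0,T]} \lesssim \|S\|_{p,[0,T]}^2 \int_{\mathcal{Z}} \|f\|_{C^2_b} \dd \nu(f) < \infty$. The same Minkowski argument shows that $\phi^\nu$ itself is c\`adl\`ag of finite $p$-variation, so $(\phi^\nu,(\phi^\nu)') \in \cV^{p,p/2}_S$ as required.

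Finally, for the jump-time condition, note that each $f \in \mathcal{Z}$ is continuous, so $\phi^f$ can jump only where $S$ jumps, and dominated convergence (using the majorant $\|f\|_{C^2_b}$) transfers this to the averaged path: $J_{\phi^\nu} \subseteq J_S$. By Proposition~\ref{prop: uniform convergence}, Property~\textup{(RIE)} forces $J_S \subseteq \cup_{n \in \N} \cP^n$, so the admissibility condition on jumps is satisfied. The main obstacle I anticipate is the measurability bookkeeping needed to legitimately apply Fubini and the partition-wise Minkowski inequality; this is the only non-routine point, since the quantitative bounds on each summand follow directly from the estimates already established in Proposition~\ref{prop: capital process well-defined}.
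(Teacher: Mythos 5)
Your proposal is correct and reaches the same conclusion, but it takes a more explicit route than the paper. The paper's proof works entirely at the level of the Banach space $\cV^{p,p/2}_S$: it recalls that $(\phi,\phi') \mapsto |\phi_0| + \|\phi,\phi'\|_{\cV^{p,p/2}_S}$ is a norm making $\cV^{p,p/2}_S$ a Banach space, observes that the subset of controlled paths satisfying $J_\phi \subseteq \cup_{n \in \N} \cP^n$ is a closed linear subspace (hence itself a Banach space), and then invokes the integrability bound
$\int_{\mathcal{Z}} (|\phi^f_0| + \|\phi^f,(\phi^f)'\|_{\cV^{p,p/2}_S}) \dd\nu \lesssim (1 + \|S\|_{p,[0,T]}^2) \int_{\mathcal{Z}} \|f\|_{C^2_b}\dd\nu < \infty$ from \eqref{eq:phi' bound} and \eqref{eq:R^phi bound} to conclude that the map $f \mapsto (\phi^f,(\phi^f)')$ is Bochner integrable with values in that closed subspace, so the integral lands in it automatically. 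You instead define $\phi^\nu_t$ and $(\phi^\nu)'_t$ pointwise as $\R^d$-valued Bochner integrals, verify the controlled-path bounds by hand via partition-wise Minkowski (valid since $p > 2 > 1$ and $p/2 > 1$), and handle the jump condition separately via dominated convergence. Both routes are sound; the paper's is shorter once you accept the closed-subspace observation, while yours avoids Bochner integration in an abstract function space at the cost of repeating the variation estimates. Your anticipated ``obstacle'' (measurability bookkeeping for Fubini/Minkowski) is precisely what the paper sidesteps by working abstractly, but in either case it is routine given Borel measurability of the evaluation maps on $\mathcal{Z} \subseteq C^2_b$. One small remark: the jump-time claim $J_{\phi^\nu} \subseteq J_S$ is stronger than needed; the paper only records $J_{\phi^\nu} \subseteq \cup_{n\in\N}\cP^n$, which in your framing follows immediately by combining $J_{\phi^\nu}\subseteq J_S$ with Proposition~\ref{prop: uniform convergence}, exactly as you say.
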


\begin{proof}
  Let $T > 0$. We know from Proposition~\ref{prop: capital process well-defined} that, for each $f \in \mathcal{Z}$, the corresponding functionally generated portfolio $\phi = \phi^f$ is an admissible strategy. Let $\phi'$ and $R^\phi$ denote the corresponding Gubinelli derivative and remainder term. It follows from the inequalities in \eqref{eq:phi' bound} and \eqref{eq:R^phi bound} that
  \begin{equation*}
   |\phi_0| + \|\phi,\phi'\|_{\cV^{p,p/2}_S} = |\phi_0| + |\phi'_0| + \|\phi'\|_{p,[0,T]} + \|R^\phi\|_{\frac{p}{2},[0,T]} \lesssim \|f\|_{C^{2}_b} (1 + \|S\|_{p,[0,T]}^2),
  \end{equation*}
  and hence that
  \begin{equation}\label{eq:integrability for Cover}
    \int_{\mathcal{Z}} (|\phi_0| + \|\phi,\phi'\|_{\cV^{p,p/2}_S}) \dd \nu \lesssim (1 + \|S\|_{p,[0,T]}^2) \int_{\mathcal{Z}} \|f\|_{C^{2}_b} \dd \nu < \infty.
  \end{equation}

  Recall that the map $(\phi,\phi') \mapsto |\phi_0| + \|\phi,\phi'\|_{\cV^{p,p/2}_S}$ is a norm on the Banach space of controlled paths $\cV_S^{p,\frac{p}{2}}$. Note moreover that the subset of controlled paths $(\phi,\phi') \in \cV_S^{p,\frac{p}{2}}$ satisfying $J_\phi \subseteq \cup_{n \in \N} \cP^n$ is a closed linear subspace of $\cV_S^{p,\frac{p}{2}}$, and thus is itself a Banach space.

  It follows from the integrability condition in \eqref{eq:integrability for Cover} that the integral in \eqref{eq:defn Cover's portfolio} exists as a well-defined Bochner integral, and defines a controlled path $(\phi^\nu,(\phi^\nu)') \in \cV_S^{p,\frac{p}{2}}$ satisfying $J_{\phi^\nu} \subseteq \cup_{n \in \N} \cP^n$.
\end{proof}

\subsection{Functionally generated portfolios from stochastic portfolio theory}

In this section we briefly discuss admissible strategies appearing in stochastic portfolio theory, as initiated by Fernholz \cite{Fernholz2002}; see also e.g.~Strong \cite{Strong2014} and Karatzas and Ruf \cite{Karatzas2017}. Following the model-free framework for stochastic portfolio theory, as introduced in Schied, Speiser and Voloshchenko \cite{Schied2018} and Cuchiero, Schachermayer and Wong \cite{Cuchiero2019}, we consider a $d$-dimensional c\`adl\`ag path $S = (S^1, \ldots, S^d)$ such that $S^i_t > 0$ for all $t \geq 0$ and $i = 1, \ldots, d$. The total capitalization $\Sigma$ is defined by $\Sigma_t := S^1_t + \cdots + S^d_t$, and the relative market weight process $\mu$ is given by
\[\mu^i_t := \frac{S^i_t}{\Sigma_t} = \frac{S^i_t}{S^1_t + \cdots + S^d_t}, \qquad i = 1, \ldots, d,\]
which takes values in the open unit simplex $\Delta^d_+ := \{x \in \R^d : \sum_{i=1}^d x_i = 1, x_i > 0 \text{ for all } i\}$.

Here we impose that the market weight process $\mu = (\mu^1, \ldots, \mu^d)$ satisfies Property \textup{(RIE)} with respect to some $p \in (2,3)$ and a sequence of nested partitions $(\cP^n)_{n \in \N}$. More precisely, we assume that $\mu$ is a price path, in the sense of Definition~\ref{defn price path}.

\smallskip

Given a controlled path $\theta \in \crpmuq$, we define its associated value evolution $V^\theta$ by $V^\theta_t := \sum_{i=1}^d \theta^i_t \mu^i_t$ for $t \geq 0$. We also denote $Q^\theta_t := V^\theta_t - V^\theta_0 - \int_0^t \theta_s \dd \mu_s$, where $\int_0^t \theta_s \dd \mu_s$ is interpreted as a rough integral, as in Remark~\ref{remark classical rough integral as special case}. As in classical mathematical finance (see e.g.~\cite[Proposition~2.3]{Karatzas2017}) we can show that every controlled path $\theta \in \crpmuq$ induces a self-financing trading strategy $\varphi$, and that every such strategy $\varphi$ is itself a controlled path in $\crpmuq$.

\begin{proposition}
Given $\theta \in \crpmuq$ and a constant $C \in \R$, we introduce
\[\varphi^i_t := \theta^i_t - Q^\theta_t - C, \qquad t \geq 0, \quad i = 1, \ldots, d.\]
Then the resulting path $\varphi = (\varphi^1, \ldots, \varphi^d)$ is a controlled path in $\crpmuq$, and $\varphi$ is self-financing, in the sense that
\[V^\varphi_t - V^\varphi_0 = \int_0^t \varphi_s \dd \mu_s, \qquad t \geq 0,\]
where $V^\varphi_t = \sum_{i=1}^d \varphi^i_t \mu^i_t$. Moreover, if $\theta$ is an admissible strategy in the sense of Definition~\ref{def: admissible strategy}, then so is $\varphi$.
\end{proposition}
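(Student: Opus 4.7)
The core observation driving the argument is that the relative market weights satisfy the normalisation $\sum_{i=1}^d \mu^i_s \equiv 1$, so the total path $\sum_i \mu^i$ is constant and thus lifts to the trivial rough path. My plan is three steps: first to verify the controlled-path structure of $\varphi$ in $\crpmuq$; second to derive the self-financing identity; and third to check that the jump-time condition for admissibility is preserved.

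For the controlled-path structure, writing $\varphi^i_t = \theta^i_t - Q^\theta_t - C$, the only nontrivial point is to show that $Q^\theta = V^\theta - V^\theta_0 - \int_0^\cdot \theta_s \dd \mu_s$ is controlled with respect to $\mu$. The rough integral $\int_0^\cdot \theta_s \dd \mu_s$ is controlled with Gubinelli derivative $\theta$ by Remark~\ref{rmk: rough int is controlled path}. For the value process $V^\theta = \sum_i \theta^i \mu^i$, each factor $\mu^i$ is trivially controlled (Gubinelli derivative $e_i$, vanishing remainder) and each $\theta^i$ is controlled as a component of $\theta$; the product of two controlled paths is again controlled by the standard rough product rule, which one checks by Taylor-expanding
\[
(\theta^i \mu^i)_{s,t} = \theta^i_s \mu^i_{s,t} + \mu^i_s \theta^i_{s,t} + \theta^i_{s,t}\mu^i_{s,t}
\]
and applying Young's inequality with $\frac{1}{r} = \frac{1}{p} + \frac{1}{q}$ to identify the Gubinelli derivative and bound the cross-term remainder in $r$-variation.

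For self-financing, using $\sum_i \mu^i_t = 1$ I get $V^\varphi_t = \sum_i (\theta^i_t - Q^\theta_t - C)\mu^i_t = V^\theta_t - Q^\theta_t - C$, and since $Q^\theta_0 = 0$ this yields $V^\varphi_t - V^\varphi_0 = V^\theta_t - V^\theta_0 - Q^\theta_t = \int_0^t \theta_s \dd \mu_s$ by the definition of $Q^\theta$. It then remains to show this coincides with $\int_0^t \varphi_s \dd \mu_s$. Since $\theta_s - \varphi_s = (Q^\theta_s + C)\mathbf{1}$ with $\mathbf{1} = (1,\dots,1) \in \R^d$, linearity of the rough integral in integrand and integrator reduces the claim to
\[
\int_0^t (Q^\theta_s + C) \dd \Big(\sum_{i=1}^d \mu^i_s\Big) = 0,
\]
which holds because $\sum_i \mu^i \equiv 1$ is a constant path, so both the integrator increment and the Gubinelli-derivative term in the compensated Riemann sum of \eqref{eq:general rough integral defn} vanish.

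For admissibility, assume $J_\theta \subseteq \cup_{n \in \N} \cP^n$. I would use the chain of inclusions $J_\varphi \subseteq J_\theta \cup J_{Q^\theta}$ and $J_{Q^\theta} \subseteq J_{V^\theta} \cup J_{\int_0^\cdot \theta_s \dd\mu_s}$. Jumps of $V^\theta$ occur only at jumps of $\theta$ or $\mu$, and Property \textup{(RIE)} combined with Proposition~\ref{prop: uniform convergence} gives $J_\mu \subseteq \cup_{n \in \N} \cP^n$. For the rough integral, from the identity $A_{s,t} = \int_s^t \mu_u \otimes \dd \mu_u - \mu_s \otimes \mu_{s,t}$ one computes $A_{t-,t} = 0$, so that the jump of $\int_0^\cdot \theta_s \dd \mu_s$ at $t$ collapses via Proposition~\ref{prop: general rough integral exists} to $\theta_{t-}\cdot \mu_{t-,t}$, which is nonzero only for $t \in J_\mu$. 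The step I expect to require the most care is the product rule in the second paragraph: I need to carry the scalar-valued Gubinelli derivatives carefully through the identification of $(V^\theta)'$ and $(Q^\theta)'$, and to verify that the quadratic cross-term sits in $r$-variation at the precise exponents $(q,r)$ dictated by $\theta \in \crpmuq$.
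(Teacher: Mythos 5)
Your proof is correct and follows essentially the same route as the paper's: $V^\theta$ is controlled via the product rule for controlled paths (the paper cites Lemma~A.1 of~\cite{Allan2023}, while you write out the Taylor/Young estimate), the rough integral $\int_0^\cdot \theta_s\,\dd\mu_s$ is controlled with Gubinelli derivative $\theta$ by Remark~\ref{rmk: rough int is controlled path}, and the jump-time inclusions $J_\mu \subseteq \cup_n \cP^n$ and hence $J_\varphi \subseteq J_\theta \cup J_\mu \subseteq \cup_n \cP^n$ come from Proposition~\ref{prop: uniform convergence}. The one place where you go beyond the paper is the self-financing identity, which the paper defers to~\cite[Proposition~2.3]{Karatzas2017}, whereas you derive it directly from $\sum_i \mu^i \equiv 1$. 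That derivation is sound in spirit, but the reduction to ``$\int_0^t (Q^\theta_s+C)\,\dd\big(\sum_i\mu^i_s\big)=0$'' is a mild abuse of notation: the rough integral $\int_0^\cdot(\theta-\varphi)_s\,\dd\mu_s$ is taken against the rough path $\bS=(\mu,\mu,A)$, and the second-level term in the compensated Riemann sums involves the full matrix $A_{u,v}$, not a scalar lift of $\sum_i\mu^i$. To close the gap you should record the one-line identity
\[
\sum_j A^{kj}_{s,t} = \int_s^t \mu^k_{s,u}\,\dd\Big(\sum_j\mu^j\Big)_u = 0 \qquad\text{(and similarly }\sum_j A^{jk}_{s,t}=0\text{)},
\]
which holds because $\sum_j \mu^j$ is constant so all the approximating left-point Riemann sums from Property~\textup{(RIE)} vanish; since $(\theta-\varphi)'$ has each row equal to $(Q^\theta)'$, the Gubinelli-derivative term $(\theta-\varphi)'_u A_{u,v}$ then contracts to zero, and together with $(\theta_u-\varphi_u)\cdot\mu_{u,v}=(Q^\theta_u+C)\sum_i\mu^i_{u,v}=0$ the compensated Riemann sums for $\int(\theta-\varphi)\,\dd\mu$ vanish identically, as you claim.
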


\begin{proof}
Since $\theta \in \crpmuq$ is a controlled path, by (the trivial extension to c\`adl\`ag paths of) \cite[Lemma~A.1]{Allan2023}, the path $V^\theta = \sum_{i=1}^d \theta^i \mu^i$ is also a controlled path. Recalling Remark~\ref{rmk: rough int is controlled path}, we also have that the rough integral $\int_0^\cdot \theta \dd \mu$ is itself a controlled path (with Gubinelli derivative equal to $\theta$). It is then clear that $Q^\theta = V^\theta - V^\theta_0 - \int_0^\cdot \theta_s \dd \mu_s$ is a controlled path, and hence that $\varphi^i = \theta^i - Q^\theta - C$ is as well for every $i = 1, \ldots, d$, so that $\varphi \in \crpmuq$. The self-financing property of $\varphi$ can be verified by following the proof of \cite[Proposition~2.3]{Karatzas2017}.

We know from Proposition~\ref{prop: uniform convergence} and the fact that $\mu$ satisfies Property \textup{(RIE)} that the jump times $J_\mu$ of $\mu$ satisfy $J_\mu \subseteq \cup_{n \in \N} \cP^n$. It is also clear that the jump times of the integral $\int_0^\cdot \theta \dd \mu$ form a subset of $J_\mu$. Thus, under the assumption that $\theta$ is an admissible strategy, so that $J_\theta \subseteq \cup_{n \in \N} \cP^n$, we also deduce that $J_\varphi \subseteq \cup_{n \in \N} \cP^n$, so that $\varphi$ is itself an admissible strategy.
\end{proof}

The following definition introduces notions which can be seen as the rough path counterparts of the functionally generated strategies induced by regular and Lyapunov functions from stochastic portfolio theory; see \cite[Definitions~3.1 and 3.3]{Karatzas2017}.

\begin{definition}\label{def: regular function}
We say that a continuous function $G \colon \Delta^d_+ \to \R$ is \emph{regular} for the market weight process $\mu$ if
\begin{enumerate}[(i)]
\item there exists a measurable function $\D G = (\D_1 G, \ldots, \D_d G)$ on $\Delta^d_+$ such that the path $\theta = (\theta^1, \ldots, \theta^d)$, given by $\theta^i = \D_i G(\mu)$ for $i = 1, \ldots, d$, is an admissible strategy (in the sense of Definition~\ref{def: admissible strategy}), and
\item the (c\`adl\`ag) path $\Gamma^G$, given by
\[\Gamma^G_t := G(\mu_0) - G(\mu_t) + \int_0^t \theta_s \dd \mu_s, \qquad t \geq 0,\]
has locally bounded variation.
\end{enumerate}

We say that a regular function $G \colon \Delta^d_+ \to \R$ is a \emph{Lyapunov} function for the market weight process $\mu$ if the path $\Gamma^G$ is also nondecreasing.
\end{definition}

\begin{example}
Suppose that $G \colon \Delta^d_+ \to \R$ is a $C^3$ function. It is then straightforward to see that $\theta = \D G(\mu)$ defines a controlled path, with $\D G$ here defined as the classical gradient of $G$. It also follows from the It\^o formula for rough paths (recall Remark~\ref{remark; Ito formula}) that
\begin{equation}\label{eq:Gamma in smooth G example}
\Gamma^G_t = -\frac{1}{2} \int_0^t \D^2 G(\mu_s) \dd [\mu]^c_s - \sum_{s \leq t} (G(\mu_s) - G(\mu_{s-}) - \D G(\mu_{s-}) \Delta \mu_s),
\end{equation}
where $[\mu]^c$ is the continuous part of the quadratic variation of $\mu$, and $\Delta \mu_s = \mu_s - \mu_{s-}$ denotes the jump of $\mu$ at time $s$.

If we also assume that $G$ is concave, then we infer from \eqref{eq:Gamma in smooth G example} that $\Gamma^G$ is nondecreasing. In this case $G$ is a Lyapunov function in the sense of Definition~\ref{def: regular function}. Two important examples of such functions $G$ are the Gibbs entropy function, $H(x) := \sum_{i=1}^d x_i \log \frac{1}{x_i}$, and the quadratic function, $Q^{(c)}(x):= c - \sum_{i=1}^d x_i^2$ for some $c \in \R$; see \cite[Section~5.1]{Karatzas2017}.
\end{example}

\begin{remark}
If $\mu$ is realized by a semimartingale model, then any continuous concave (but not necessarily $C^3$) function $G$ can be a candidate for a Lyapunov function in the sense of \cite[Definition~3.3]{Karatzas2017}. This is essentially because in stochastic portfolio theory one only needs the ``supergradients'' $\D G$ to be measurable, so that $\D G(\mu)$ is integrable with respect to the semimartingale $\mu$. In contrast, in our purely pathwise setup, measurability of $\D G$ alone is not enough to ensure that $\theta = \D G(\mu)$ is controlled by $\mu$, so we require more regularity of the generating function $G$. This illustrates a difference between stochastic integration in a probabilistic setting and rough integration when only a single deterministic path is considered; for more detailed discussions on this theme, we refer to \cite{Allan2023}. On the other hand, as shown in the previous example, many important Lyapunov functions from stochastic portfolio theory (such as the Gibbs entropy function) are actually smooth, so they do induce controlled paths (and indeed admissible strategies in the sense of Definition~\ref{def: admissible strategy}) for almost all trajectories of $\mu$, and the stability results established in Section~\ref{sec:trading strategies} remain valid for these functionally generated strategies. It would be interesting to explore further Lyapunov functions for rough paths, but this is beyond the scope of the present paper.
\end{remark}

Based on the previous observations, we can also extend the following notions from classical stochastic portfolio theory (e.g.~\cite{Karatzas2017}) to our rough path setting. The proof is straightforward and is therefore omitted for brevity.

\begin{corollary}
Suppose that the market weight process $\mu = (\mu^1, \ldots, \mu^d)$ is a price path in the sense of Definition~\ref{defn price path}, and let $\theta = (\theta^1, \ldots, \theta^d)$ be an admissible strategy for $\mu$ in the sense of Definition~\ref{def: admissible strategy}. Let $G$ be a regular function for $\mu$ in the sense of Definition~\ref{def: regular function}. Then the following paths are also admissible strategies for $\mu$:
\begin{enumerate}[(i)]
\item the additive generated strategy: $\varphi^i_t = \theta^i_t - Q^\theta_t - C_0$, $i = 1, \ldots, d$, where $Q^\theta_t = V^\theta_t - V^\theta_0 - \int_0^t \theta_s \dd \mu_s$ and $C_0 = \sum_{j=1}^d \mu^i_0 \D_j G(\mu_0) - G(\mu_0)$,
\item the portfolio weights associated with $\varphi$: $\pi^i_t = \frac{\mu^i_t \varphi^i_t}{\sum_{j=1}^d \mu^j_t \varphi^j_t}$, $i = 1, \ldots, d$,
\item the multiplicative generated strategy: $\Phi^i_t = \eta^i_t - Q^\eta_t - C_0$, for $\eta^i_t = \theta^i_t \exp(\int_0^t \frac{\dd \Gamma^G_s}{G(\mu_s)})$, $i = 1, \ldots, d$, and $Q^\eta_t = V^\eta_t - V^\eta_0 - \int_0^t \eta_s \dd \mu_s$, where here the function $G$ is also assumed to be positive and bounded away from zero,
\item the portfolio weights associated with $\Phi$: $\Pi^i_t = \mu^i_t(1 + \frac{1}{G(\mu_t)} (\D_i G(\mu_t) - \sum_{j=1}^d \D_j G(\mu_t) \mu^j_t))$, $i = 1, \ldots, d$.
\end{enumerate}
\end{corollary}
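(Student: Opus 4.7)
The plan is to verify, for each of the four strategies (i)--(iv), the two defining properties of an admissible strategy: (a) that the path is a controlled path in $\crpmuq$, and (b) that its jump times are contained in $\cup_{n \in \N} \cP^n$. Both properties behave well under the natural algebraic operations that appear, so the argument reduces to an algebra of controlled paths together with bookkeeping of jump times, using repeatedly that the rough integral $\int_0^\cdot F \dd \mu$ is itself a controlled path (Remark~\ref{rmk: rough int is controlled path}), that sums and products of controlled paths are controlled (the c\`adl\`ag analogue of \cite[Lemma~A.1]{Allan2023} already invoked in the preceding proposition), and that composition of a controlled path with a $C^2$ function is controlled.

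For (i), I would first observe that $V^\theta = \sum_i \theta^i \mu^i$ is controlled by the product rule for controlled paths, and $\int_0^\cdot \theta \dd \mu$ is controlled by Remark~\ref{rmk: rough int is controlled path}; hence $Q^\theta$, and therefore $\varphi^i = \theta^i - Q^\theta - C_0$, is controlled. The jump times of $\varphi^i$ are contained in $J_\theta \cup J_{V^\theta} \cup J_{\int \theta \dd \mu} \subseteq J_\theta \cup J_\mu$, which by hypothesis lies in $\cup_n \cP^n$. This is the template for the remaining cases.

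For (iv), I would use the identity $G(\mu_t) = G(\mu_0) + \int_0^t \theta \dd \mu - \Gamma^G_t$ from Definition~\ref{def: regular function} to exhibit $G(\mu)$ as the sum of a rough integral (which is controlled) and a path of locally bounded variation (which is trivially controlled with zero Gubinelli derivative). Assuming $G(\mu)$ is locally bounded away from zero, $1/G(\mu)$ is controlled by composition with the $C^2$ map $x\mapsto 1/x$, and $\Pi^i$ is then built from finitely many sums, products, and scalar multiplications of controlled paths. For (ii) the same scheme applies after first noting that $V^\varphi$ is controlled (via the self-financing identity proved in the previous proposition and the argument for (i)) and assuming $V^\varphi$ stays locally bounded away from zero. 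For (iii), the path $\int_0^\cdot \dd \Gamma^G_s / G(\mu_s)$ has locally bounded variation (as $G(\mu)$ is positive and bounded away from zero), so its exponential is BV as well; hence $\eta^i = \theta^i \exp(\cdots)$ is a product of a controlled path with a BV path and is controlled, after which the argument for (i) applies verbatim to $\eta$ in place of $\theta$.

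The main obstacles are the quotients appearing in (ii) and (iv): one must ensure that $V^\varphi$ and $G(\mu)$ remain locally bounded away from zero so that composition with $x\mapsto 1/x$ is genuinely $C^2$ on the relevant range, and then check that this composition respects the $(q,r)$-exponent regime of Definition~\ref{def: controlled path} in the c\`adl\`ag setting. These are the only non-algebraic points; once they are handled, all four items follow by the same mechanism, and the jump-time condition is inherited from those of $\mu$, $\theta$, and $\Gamma^G$, each of which is already contained in $\cup_n \cP^n$ by hypothesis, by Property~\textup{(RIE)} combined with Proposition~\ref{prop: uniform convergence}, and by the previous two observations respectively.
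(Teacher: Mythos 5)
The paper omits the proof of this corollary, declaring it ``straightforward,'' so there is no text to compare against; your argument supplies exactly the kind of algebra-of-controlled-paths reasoning the authors are implicitly relying on. A few remarks. Your treatment of (i) is correct and is the template the remaining cases reduce to. Your observation for (iv) is the key technical point worth spelling out: since $G$ is only assumed \emph{regular} (not $C^3$), one cannot conclude directly that $G(\mu)$ is controlled by $\mu$; rearranging the definition of $\Gamma^G$ to write
\[
G(\mu_t) = G(\mu_0) + \int_0^t \theta_s \dd \mu_s - \Gamma^G_t
\]
expresses $G(\mu)$ as a rough integral (controlled by Remark~\ref{rmk: rough int is controlled path}) plus a path of locally bounded variation (controlled with vanishing Gubinelli derivative), which is exactly what makes composition with $x \mapsto 1/x$ available. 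The only genuine hypotheses you add --- that $V^\varphi$ in (ii) stays locally bounded away from zero, and that $G(\mu)$ in (iv) is bounded away from zero (the paper states this only for (iii), but it is equally necessary in (iv)) --- are indeed required for the quotients to be well-defined and for the $C^2$-composition lemma to apply on the relevant compact range; these are standard implicit assumptions in stochastic portfolio theory and the corollary is best read with them in force. Your bookkeeping of jump times ($J_\mu \subseteq \cup_n \cP^n$ via Property~(RIE) and Proposition~\ref{prop: uniform convergence}, $J_\theta \subseteq \cup_n \cP^n$ by admissibility, $J_{\Gamma^G} \subseteq J_\mu$ since $\Gamma^G = G(\mu_0) - G(\mu) + \int_0^\cdot \theta \dd \mu$) is correct and closes the argument. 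In short, the proof is sound and, as far as can be judged, coincides with what the authors intended.
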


\section{Semimartingales and typical price paths satisfy Property~(RIE)}\label{sec:stochastic processes}

In this section we show that many stochastic processes commonly used to model the price evolutions on financial markets satisfy Property~\textup{(RIE)} along a suitable sequence of partitions. In particular, we verify that Property~\textup{(RIE)} holds for semimartingales and for typical price paths in the sense of Vovk \cite{Vovk2012}.

\subsection{Semimartingales}\label{subsect: semimartingale}

Semimartingales, such as geometric Brownian motion and Markov jump-diffusion processes, serve as the most frequently used stochastic processes to model price evolutions on financial markets. For more details on semimartingales and It{\^o} integration we refer to the standard textbook by Protter \cite[Chapter~II]{Protter2005}.

Usually the considered class of semimartingales is restricted to those satisfying the condition ``no free lunch with vanishing risk'' in classical mathematical finance, e.g.~Delbaen and Schachermayer \cite{Delbaen1994}, or the condition ``no unbounded profit with bounded risk'' (NUPBR) in stochastic portfolio theory; see e.g.~Karatzas and Kardaras \cite{Karatzas2007}. Such a restriction is not required here. Property~\textup{(RIE)} is fulfilled by general c{\`a}dl{\`a}g semimartingales with respect to any $p \in (2,3)$ and a suitable (random) sequence of partitions.

\smallskip

Let us fix a filtered probability space $(\Omega,\mathcal{F},(\mathcal{F}_t)_{t \in [0,\infty)},\P)$ and assume that the filtration $(\mathcal{F}_t)_{t \in [0,\infty)}$ satisfies the usual conditions, i.e.~completeness and right-continuity. On this probability space we consider a $d$-dimensional c{\`a}dl{\`a}g semimartingale $X = (X_t)_{t \in [0,\infty)}$.

\smallskip

For each $n \in \N$, we introduce stopping times $(\tau^n_k)_{k \in \N \cup \{0\}}$ such that $\tau^n_0 = 0$, and
$$
  \tau^n_k := \inf \{t > \tau^n_{k-1} : |X_t - X_{\tau^n_{k-1}}| \geq 2^{-n}\}, \qquad k \in \N.
$$
We then define a sequence of partitions $(\cP_X^n)_{n \in \N}$ by
$$
  \cP_X^n := \{\tau^m_k : m \leq n, k \in \N \cup \{0\}\}.
$$
Note that $(\cP_X^n)_{n \in \N}$ is a nested sequence of adapted partitions. However, this sequence of partitions will not have vanishing mesh size if the path $X$ has an interval of constancy. To amend this, we proceed as follows. For each $n \in \N$ and $k \in \N \cup \{0\}$, we set
\begin{align*}
\sigma^n_k &:= \tau^n_{k+1} \wedge \inf \{t > \tau^n_k : \exists \delta > 0 \text{ such that } X_s = X_t \text{ for all } s \in [t,t + \delta]\},\\
\varsigma^n_k &:= \tau^n_{k+1} \wedge \inf \{t > \sigma^n_k: X_t \neq X_{\sigma^n_k}\},
\end{align*}
that is, the beginning and end points of the first interval of constancy of $X$ within the interval $[\tau^n_k,\tau^n_{k+1}]$. For each $i \in \N$, we then define $\rho^n_{k,i} := (\sigma^n_k + i 2^{-n}) \wedge \varsigma^n_k$. Clearly, for each $n, k$, we will have that $\rho^n_{k,i} = \varsigma^n_k$ for all but finitely many $i \in \N$ (provided that $\varsigma^n_k < \infty$). Finally, we define
\begin{equation*}
\mathcal{Q}^n_X := \cP_X^n \cup \{\sigma^m_k : m \leq n, k \in \N \cup \{0\}\} \cup \{\rho^m_{k,i} : m \leq n, k \in \N \cup \{0\}, i \in \N\}.
\end{equation*}
The sequence of partitions $(\mathcal{Q}_X^n)_{n \in \N}$ is still nested, and moreover has vanishing mesh size on every compact time interval. Moreover, it is straightforward to see that all the points $\tau^n_k$, $\sigma^n_k$ and $\rho^n_{k,i}$ appearing in the partitions $(\mathcal{Q}_X^n)_{n \in \N}$ are stopping times with respect to the (right-continuous) filtration $(\mathcal{F}_t)_{t \in [0,\infty)}$. In the next proposition we will show that $X$ satisfies Property~\textup{(RIE)} with respect to any $p \in (2,3)$ and the sequence of partitions $(\mathcal{Q}^n_X)_{n \in \N}$.

\begin{proposition}\label{prop: semimartingales satisfy RIE}
  Let $p \in (2,3)$, and let $X$ be a $d$-dimensional c{\`a}dl{\`a}g semimartingale. Then almost every sample path of $X$ is a price path in the sense of Definition~\ref{defn price path}.
\end{proposition}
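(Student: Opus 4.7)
The plan is to verify, almost surely for each $T > 0$, the three bullets of Property~\textup{(RIE)} on the restriction of $X$ to $[0,T]$ with respect to the partition sequence $(\mathcal{Q}^n_X \cap [0,T])_{n \in \N}$. A standard localization at the stopping times $T_N := \inf\{t \geq 0 : |X_t| + [M,M]_t + \|V\|_{1,[0,t]} \geq N\}$, for a semimartingale decomposition $X = M + V$, reduces the problem to the case where $X$, $[M,M]$ and the total variation of $V$ are uniformly bounded on $[0,T]$, since $\P(T_N > T) \to 1$ as $N \to \infty$.

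The first bullet will follow from Proposition~\ref{prop: uniform convergence}. By the refinement construction the mesh $|\mathcal{Q}^n_X \cap [0,T]|$ tends to zero, and by design the oscillation of $X$ between consecutive hitting times $\tau^n_{k-1}$ and $\tau^n_k$ is at most $2 \cdot 2^{-n}$. Hence any jump of $X$ with $|\Delta X_t| > 2^{-n+1}$ is forced to be of the form $t = \tau^n_k$, giving $J_X \subseteq \bigcup_{n \in \N} \mathcal{Q}^n_X$. For the second bullet, the left-point Riemann sums $\sum_k X_{\tau^n_k} X_{\tau^n_k \wedge t, \tau^n_{k+1} \wedge t}$ converge to the It\^o integral $\int_0^t X_{u-} \otimes \d X_u$ in ucp by classical semimartingale theory along adapted partitions of vanishing mesh; the localization together with BDG upgrades this to $L^2(\Omega;C([0,T]))$-convergence, and a Borel--Cantelli/subsequence argument combined with uniqueness of the ucp limit yields almost-sure uniform convergence along the full sequence.

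The main obstacle is the third bullet, constructing a single (random) control $w$ with $|X_{s,t}|^p \leq w(s,t)$ and $|A^n_{\tau^n_k, \tau^n_\ell}|^{p/2} \leq w(\tau^n_k, \tau^n_\ell)$ uniformly in $n$, where $A^n_{\tau^n_k, \tau^n_\ell} = \sum_{k \leq j < \ell} X_{\tau^n_k, \tau^n_j} \otimes X_{\tau^n_j, \tau^n_{j+1}}$. L\'epingle's $p$-variation BDG inequality for $p > 2$ gives $\|M\|_{p,[0,T]} < \infty$ a.s., so together with the finite variation of $V$ the control $w_1(s,t) := \|X\|_{p,[s,t]}^p$ handles the $X$-term. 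For the iterated sum, I split each factor using $X = M + V$ into four contributions. The three terms containing at least one $V$-factor are bounded pathwise by $\|X\|_\infty \|V\|_{1,[s,t]}$, yielding a contribution of order $\|V\|_{1,[s,t]}^{p/2}$ to $w$. The purely-martingale contribution $A^{n,M}_{\tau^n_k, \tau^n_\ell}$ is a discrete $(\mathcal{F}_{\tau^n_\ell})_\ell$-martingale with orthogonal increments, and a two-parameter L\'epingle-type $p/2$-variation BDG inequality --- exploiting the uniform bound $|M_{\tau^n_j, \tau^n_{j+1}}| \leq 2^{-n} + |\Delta M_{\tau^n_{j+1}}|$ characteristic of the space discretization --- provides $\mathbb{E}[\|A^{n,M}\|_{p/2,[0,T]}^{p/2}] \lesssim \mathbb{E}[[M,M]_T^{p/2}]$ uniformly in $n$. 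A diagonal extraction converts this into a.s.\ uniform-in-$n$ pathwise bounds, so that a random control of the form
\[
w(s,t) = C\bigl(\|X\|_{p,[s,t]}^p + ([M,M]_t - [M,M]_s)^{p/2} + \|V\|_{1,[s,t]}^{p/2}\bigr)
\]
with a sufficiently large random constant $C$ satisfies both requirements of~\eqref{eq:RIE sups}.

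The hardest step will be justifying the uniform-in-$n$ $p/2$-variation bound on the discrete iterated martingale integrals $A^{n,M}$: one must carefully track how the nesting $\mathcal{Q}^n \subset \mathcal{Q}^{n+1}$ interacts with the martingale property in the outer index $\ell$, and exploit the specific small-increment structure furnished by the space discretization. This is essentially the c\`adl\`ag analogue of the construction of the It\^o rough-path lift for semimartingales in the style of Friz--Zhang~\cite{Friz2018} and Chevyrev--Friz~\cite{Chevyrev2019}, adapted so that the bound holds simultaneously for all $n$ rather than only for the limit.
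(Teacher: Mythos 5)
Your overall plan (verifying the three bullets of Property \textup{(RIE)} along $(\mathcal{Q}^n_X)_n$, with Steps 1--2 handled via uniform convergence of the space discretization and BDG + Borel--Cantelli for the It\^o integrals) matches the paper, but your treatment of the third bullet takes a genuinely different route and contains a real gap.

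The gap is in the passage from a uniform-in-$n$ expectation bound to an a.s.\ uniform-in-$n$ pathwise bound. You propose showing $\mathbb{E}\big[\|A^{n,M}\|_{p/2,[0,T]}^{p/2}\big] \lesssim \mathbb{E}\big[[M,M]_T^{p/2}\big]$ uniformly in $n$, and then invoking a ``diagonal extraction'' to conclude $\sup_n \|A^{n,M}(\omega)\|_{p/2,[0,T]} < \infty$ for a.e.\ $\omega$. Uniform boundedness in $L^1$ does not yield a.s.\ finiteness of the supremum over $n$: for instance $Z_n = n\,\1_{A_n}$ with $\mathbb{P}(A_n) = 1/n$ and the $A_n$ independent has $\mathbb{E}[Z_n] = 1$ for all $n$ yet $\limsup_n Z_n = \infty$ a.s. Borel--Cantelli requires summable tail probabilities, which a uniform-in-$n$ moment bound does not provide. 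To repair this you would need something stronger, such as geometric decay of $\|A^{n+1,M}-A^{n,M}\|_{p/2}$ in $L^1$ or in probability (which is substantially harder than what you invoke), or a genuine maximal-in-$n$ inequality. A further, smaller issue: the stopping times are defined via $X$, not $M$, so the clean bound $|M_{\tau^n_j,\tau^n_{j+1}}| \leq 2^{-n} + |\Delta M_{\tau^n_{j+1}}|$ you rely on does not hold; one only has $|M_{\tau^n_j,\tau^n_{j+1}}| \leq 2^{-n} + |\Delta X_{\tau^n_{j+1}}| + |V_{\tau^n_j,\tau^n_{j+1}}|$.

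The paper avoids both problems by never splitting $X = M + V$ and never attempting a uniform-in-$n$ BDG inequality for the discrete areas. Instead it exploits the fact that the space discretization forces $|X_{\tau^n_j,\tau^n_{j+1}}| \geq 2^{-n}$, so the number $N$ of partition points of $\mathcal{Q}^n_X$ inside $[s,t]$ satisfies $N \lesssim 2^{n q_0} w_{X,q_0}(s,t)$ for any control $w_{X,q_0}$ of the $q_0$-variation of $X$ (with $q_0 \in (2,p)$ close to $2$). A deterministic Young-type removal-of-points argument then gives $|A^n_{s,t}| \lesssim 2^{n(q_0-2)} w_{X,q_0}(s,t)$, which is enough whenever $w_{X,q_0}(s,t)$ is small relative to $2^{-n}$. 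In the complementary regime, the a.s.\ convergence rate $\|A^n - A\|_\infty \lesssim 2^{-n(1-\varepsilon)}$ from Step~1, combined with the finite $p/2$-variation of the limiting It\^o area $\mathbb{X}$ (from \cite[Proposition~3.4]{Liu2018}), gives $|A^n_{s,t}| \lesssim w_{X,q_0}(s,t)^{2/p} + w_{\mathbb{X},p/2}(s,t)^{2/p}$. This two-regime dichotomy yields a single random control $\tilde w_{X,p}$ working for all $n$ simultaneously, at no extra probabilistic cost beyond what Step~1 already gives. You may want to adopt this dichotomy argument rather than pursue the uniform-in-$n$ BDG route.
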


\begin{proof}
\textit{Step 1.}
Fix a $T > 0$. Let $\int_0^\cdot X_{u-} \otimes \d X_u$ denote the It{\^o} integral of $X$ with respect to itself. For each $n \in \N$, we define the discretized process $X^n = (X^n_t)_{t \in [0,T]}$ by
  \begin{equation}\label{eq:defn X^n}
    X^n_t := \sum_{[u,v]\in \mathcal{Q}^n_X([0,T])} X_{u} \1_{[u,v)}(t), \qquad t \in [0,T],
  \end{equation}
  so that in particular $\int_0^t X^n_{u-} \otimes \d X_u = \sum_{[u,v] \in \mathcal{Q}^n_X([0,T])} X_{u} \otimes X_{u \wedge t, v \wedge t}$ for $t \in [0,T]$. By the definition of the partition $\mathcal{Q}^n_X$, we have that
  $$
    \|X^n_{\cdot -} - X_{\cdot -}\|_{\infty,[0,T]} \leq 2^{1 - n} \qquad \text{for all} \qquad n \geq 1.
  $$
  An application of the Burkholder--Davis--Gundy inequality and the Borel--Cantelli lemma, as in the proof of \cite[Proposition~3.4]{Liu2018}, then yields the existence of a measurable set $\Omega^\prime \subseteq \Omega$ with full measure such that, for every $\omega \in \Omega^\prime$ and every $\varepsilon \in (0,1)$, there exists a constant $C = C(\varepsilon,\omega)$ such that
  \begin{equation}\label{eq: uniform convergence of Ito integrals}
    \bigg\|\bigg(\int_0^{\cdot} X^n_{u-} \otimes \d X_u - \int_0^{\cdot} X_{u-} \otimes \d X_u \bigg)(\omega)\bigg\|_{\infty,[0,T]} \leq C 2^{-n(1-\varepsilon)}, \qquad \forall n \geq 1.
  \end{equation}
  Thus, we have that $X^n(\omega) \to X(\omega)$ and $\int_0^{\cdot} X^n_{u-} \otimes \d X_u(\omega) \to \int_0^{\cdot} X_{u-} \otimes \d X_u(\omega)$ uniformly as $n \to \infty$, for every $\omega \in \Omega'$.

  \smallskip

  \textit{Step 2.}
  We choose a $q_0 \in (2,3)$ close enough to $2$ and an $\varepsilon \in (0,1)$ small enough such that
  \begin{equation}\label{eq: choice of q and epsilon}
    \frac{p}{2}  > \max \Big\{\frac{q_0}{2}, q_0 -1\Big\} \qquad \text{and} \qquad \frac{p}{2} \geq \frac{q_0-1-\varepsilon}{1-\varepsilon}. 
  \end{equation}
  We also fix a control function $w_{X,q_0}$ such that
  \begin{equation}\label{eq:X control q_0}
    |X_{s,t}|^{q_0} \leq w_{X,q_0}(s,t) \qquad \text{for all} \quad (s,t) \in \Delta_{[0,T]}.
  \end{equation}
  This is always possible as $X$ has almost surely finite $q$-variation for every $q > 2$, and so without loss of generality we may assume that $X(\omega)$ has finite $q_0$-variation for every $\omega \in \Omega^\prime$. Note that by \eqref{eq: choice of q and epsilon} we have $p > q_0$ and consequently (by increasing the values of $w_{X,q_0}$ by a multiplicative constant if necessary) we can also assume that 
  \begin{equation}\label{eq: finite q variation of X}
    \sup_{(s,t) \in \Delta_{[0,T]}} \frac{|X_{s,t}|^p}{w_{X,q_0}(s,t)} \leq 1.
  \end{equation}
 
  Let $0 \leq s < t \leq T$ be such that $s = \tau^n_{k_0}$ and $t = \tau^n_{k_0 + N}$ for some $n \in \N$, $k_0 \in \N \cup \{0\}$ and $N \geq 1$. By the superadditivity of the control function $w_{X,q_0}$, there must exist an $l \in \{1,2,\ldots,N-1\}$ such that
  $$
    w_{X,q_0}(\tau^n_{k_0 + l - 1},\tau^n_{k_0 + l + 1}) \leq \frac{2}{N-1} w_{X,q_0}(s,t).
  $$
  Thus, by \eqref{eq:X control q_0},
  \begin{align*}
    |&X_{\tau^n_{k_0 + l - 1}} \otimes X_{\tau^n_{k_0 + l - 1},\tau^n_{k_0 + l}} + X_{\tau^n_{k_0 + l}} \otimes X_{\tau^n_{k_0 + l},\tau^n_{k_0 + l + 1}} - X_{\tau^n_{k_0 + l - 1}} \otimes X_{\tau^n_{k_0 + l - 1},\tau^n_{k_0 + l + 1}}|\\
    &= |X_{\tau^n_{k_0 + l - 1},\tau^n_{k_0 + l}} \otimes X_{\tau^n_{k_0 + l},\tau^n_{k_0 + l + 1}}| \leq w_{X,q_0}(\tau^n_{k_0 + l - 1},\tau^n_{k_0 + l + 1})^{2/q_0}\\
    &\leq \bigg(\frac{2}{N-1} w_{X,q_0}(s,t)\bigg)^{2/q_0}.
  \end{align*}
  By successively removing in this manner all the intermediate points from the partition $\{s = \tau^n_{k_0}, \tau^n_{k_0 + 1}, \ldots, \tau^n_{k_0 + N} = t\}$, we obtain the estimate
  \begin{align*}
     \bigg|\sum_{i=0}^{N-1} X_{\tau^n_{k_0 + i}} \otimes X_{\tau^n_{k_0 + i},\tau^n_{k_0 + i + 1}} - X_s \otimes X_{s,t}\bigg| &\leq \sum_{j=2}^N \bigg(\frac{2}{j-1} w_{X,q_0}(s,t)\bigg)^{2/q_0}\\
     &\lesssim N^{1 - 2/q_0} w_{X,q_0}(s,t)^{2/q_0}.
  \end{align*}
  Since $w_{X,q_0}(s,t) \geq \sum_{i=0}^{N-1} w_{X,q_0}(\tau^n_{k_0 + i},\tau^n_{k_0 + i + 1}) \geq \sum_{i=0}^{N-1} |X_{\tau^n_{k_0 + i},\tau^n_{k_0 + i + 1}}|^{q_0} \geq N 2^{-n q_0}$, we have that $N \leq 2^{n q_0} w_{X,q_0}(s,t)$. Substituting this into the above, we have that
  \begin{equation}\label{eq:est X^n d X new}
    \bigg|\int_s^t X^n_{u-} \otimes \d X_u - X_s \otimes X_{s,t}\bigg| \lesssim 2^{n(q_0 - 2)} w_{X,q_0}(s,t),
  \end{equation}
  where here the discretized process $X^n$ is defined relative to the partition $\{\tau^n_0,\tau^n_1,\tau^n_2,\ldots\}$.

  \smallskip

  If, more generally, $0 \leq s < t \leq T$ are such that $s, t \in \cP^n_X$, then $s = \tau^{m_1}_{k_1}$ and $t = \tau^{m_2}_{k_2}$ for some $m_1, m_2 \leq n$ and $k_1, k_2 \in \N \cup \{0\}$. In this case, the number of partition points $N$ above satisfies $N \leq \sum_{m=1}^n 2^{m q_0} w_{X,q_0}(s,t) \lesssim 2^{n q_0} w_{X,q_0}(s,t)$, and we thus still obtain the same bound in \eqref{eq:est X^n d X new}. If we further allow the pair of times $s, t$ to include the times $\sigma^m_k$ for $m \leq n$ and $k \in \N \cup \{0\}$, then this at most doubles the total number of partition points $N$, so we can again obtain the same bound. Since the points $\rho^m_{k,i}$ lie inside the interval $[\sigma^m_k,\varsigma^m_k]$ on which $X$ is constant, it is clear for instance that $X_{\sigma^m_k,\rho^m_{k,i}} = 0$ and $\int_{\sigma^m_k}^{\rho^m_{k,i}} X^n_{u-} \otimes \d X_u = 0$ for every $i \in \N$, and it follows that these terms will not contribute anything to the bound above.

  Thus, the bound in \eqref{eq:est X^n d X new} actually holds for all $0 \leq s < t \leq T$ with $s, t \in \mathcal{Q}^n_X$, with $X^n$ defined relative to the partition $\mathcal{Q}^n_X$, as in \eqref{eq:defn X^n}.

  \smallskip

  \textit{Step 3.}
  We first consider the case that $w_{X,q_0}(s,t)^{\frac{2}{p(1-\varepsilon)}} \leq 2^{-n}$. In this case it follows from \eqref{eq: choice of q and epsilon} and \eqref{eq:est X^n d X new} that there exists a constant $C_1 = C_1(\omega, q_0, \varepsilon)$ such that
  \begin{equation}\label{eq: control for first case}
    \bigg|\int_s^t X^n_{u-}\otimes \d X_u - X_s \otimes X_{s,t}\bigg|^{\frac{p}{2}} \leq C_1 w_{X,q_0}(s,t).
  \end{equation}
  Now we consider the case that $w_{X,q_0}(s,t)^{\frac{2}{p(1-\varepsilon)}} \ge 2^{-n}$. Let $\mathbb{X}_{s,t} := \int_s^t X_{u-} \otimes \d X_u - X_s \otimes X_{s,t}$ be the second level component of the It{\^o} lift of $X$. By \cite[Proposition~3.4]{Liu2018}, we know that $\mathbb{X}$ possesses finite $\frac{p}{2}$-variation; that is, there exists a control function $w_{\mathbb{X},\frac{p}{2}}$ such that
$$\sup_{(s,t) \in \Delta_{[0,T]}} \frac{|\mathbb{X}_{s,t}|^{\frac{p}{2}}}{w_{\mathbb{X},\frac{p}{2}}(s,t)} \leq 1.$$
  Then, in view of~\eqref{eq: uniform convergence of Ito integrals}, we obtain
  \begin{align*}
    \bigg|\int_s^t X^n_{u-} \otimes \d X_u - X_{s} \otimes X_{s,t}\bigg| &\leq 2 \bigg\|\int_0^{\cdot} X^n_{u-} \otimes \d X_u - \int_0^{\cdot} X_{u-} \otimes \d X_u\bigg\|_{\infty,[0,T]} + |\mathbb{X}_{s,t}|\\
    &\leq C_2 \Big(2^{-n(1-\varepsilon)} + w_{\mathbb{X},\frac{p}{2}}(s,t)^{\frac{2}{p}}\Big)\\
    &\leq C_2 \Big(w_{X,q_0}(s,t)^{\frac{2}{p}} + w_{\mathbb{X},\frac{p}{2}}(s,t)^{\frac{2}{p}}\Big)
  \end{align*}
  for some constant $C_2 = C_2(\varepsilon, \omega)$, and hence
  \begin{equation}\label{eq: control for second case}
    \bigg|\int_s^t X^n_{u-}\otimes \d X_u - X_s\otimes X_{s,t}\bigg|^{\frac{p}{2}} \leq C_3 \Big(w_{X,q_0}(s,t) + w_{\mathbb{X},\frac{p}{2}}(s,t)\Big),
  \end{equation}
  where $C_3 = (2 C_2)^{\frac{p}{2}}$. Letting $\tilde w_{X,p}(s,t) := 2(1 + C_1 + C_3)(w_{X,q_0}(s,t) + w_{\mathbb{X},\frac{p}{2}}(s,t))$, and combining \eqref{eq: finite q variation of X}, \eqref{eq: control for first case} and \eqref{eq: control for second case}, we conclude that for every $\omega \in \Omega^\prime$,
$$\sup_{(s,t) \in \Delta_{[0,T]}} \frac{|X_{s,t}|^{p}}{\tilde{w}_{X,p}(s,t)} + \sup_{n \in \N} \sup_{\substack{(s,t) \in \Delta_{[0,T]}\\
s, t \in \mathcal{Q}^n_X}} \frac{|\int_s^t X^n_{u-}\otimes \d X_u - X_s \otimes X_{s,t}|^{\frac{p}{2}}}{\tilde{w}_{X,p}(s,t)} \leq 1,$$
from which Property~\textup{(RIE)} follows.
\end{proof}

\begin{remark}\label{remark: RIE lift is Ito lift}
The result of Proposition~\ref{prop: semimartingales satisfy RIE} implies that almost every sample path of a semimartingale $X$ satisfies \textup{(RIE)} with respect to a suitable sequence of partitions (which depends on the choice of sample path), and may therefore be canonically lifted to a rough path. However, we also see from the proof of Proposition~\ref{prop: semimartingales satisfy RIE} that the resulting rough path lift $(s,t) \mapsto \int_s^t X_{s,u} \otimes \dd X_u$ is nothing but the standard It\^o-rough path lift of $X$. Thus, the rough path lift itself does not actually depend on the choice of sequence of partitions. For (almost) every sample path, the sequence of partitions specified in Property \textup{(RIE)} is merely a choice of partitions along which the value of the Ito integral may be well approximated by left-point Riemann sums in a pathwise fashion.

This is analogous to F\"ollmer integration, in which the quadratic variation of a semimartingale is well-defined as a stochastic process, but to make sense of the quadratic variation in a pathwise sense requires a suitable choice of sequence of partitions, which depends on the sample path being considered. Indeed, given any continuous path, it is possible to find a sequence of partitions along which the quadratic variation of the path is equal to zero (see Freedman \cite[(70) Proposition]{Freedman1983}). Similarly, it is natural to allow the sequence of partitions specified in Property \textup{(RIE)} to depend on the path being considered, but in practice there typically exists a natural choice for this sequence which results in the desired rough path.
\end{remark}

\begin{remark}
Proposition~\ref{prop: semimartingales satisfy RIE} holds true even if the semimartingale $X$ takes values in an (infinite dimensional) Hilbert space $E$, as long as the norm on $E \otimes E$ is admissible in the sense of Lyons, Caruana and L\'evy \cite[Definition~1.25]{Lyons2007}. In particular, an extension of Proposition~\ref{prop: semimartingales satisfy RIE} to so-called piecewise semimartingales, which were introduced in Strong \cite[Definition~2.2]{Strong2014b} as generalized semimartingales with an image dimension evolving randomly in time, appears to be straightforward to implement. As discussed in Karatzas and Kim \cite[Remark~6.2 and Section~7]{Karatzas2020}, piecewise semimartingales provide a realistic framework to model so-called open markets, which are financial markets with an evolving number of traded assets.
\end{remark}

\subsection{Generalized semimartingales}

It is a well observed fact in the empirical literature, see e.g.~Lo \cite{Lo1991}, that price processes appear regularly in financial markets which are not semimartingales. Motivated by this fact, many researchers have proposed and investigated financial models based on fractional Brownian motions; see for instance Jarrow, Protter and Sayit \cite{Jarrow2009}, Cheridito \cite{Cheridito2003} or Bender \cite{Bender2012}. One example of such models are the so-called mixed Black--Scholes models. In these models the (one-dimensional) price process $S=(S_t)_{t\in [0,\infty)}$ is usually given by
\begin{equation}\label{eq:mixed Black-Scholes model}
  S_t := s_0 \exp(\sigma W_t + \eta Y_t + \nu t + \mu t^{2H}), \qquad t \in [0,\infty),
\end{equation}
for constants $s_0, \sigma, \eta > 0$ and $\nu, \mu \in \R$, where $W=(W_t)_{t\in[0,\infty)}$ is standard Brownian motion and $Y=(Y_t)_{t\in [0,\infty)}$ is a fractional Brownian motion with Hurst index $H \in (0,1)$. Multi-dimensional versions of the mixed Black--Scholes model~\eqref{eq:mixed Black-Scholes model} can be obtained by standard modifications. Notice that, while the price process $S$ as defined in \eqref{eq:mixed Black-Scholes model} is not a semimartingale if $H \neq 1/2$, the mixed Black--Scholes model~\eqref{eq:mixed Black-Scholes model} is still arbitrage-free when restricting the admissible trading strategies to classes of trading strategies which, roughly speaking, exclude continuous rebalancing of the positions in the underlying market, cf.~\cite{Jarrow2009,Cheridito2003,Bender2012}. In particular, the mixed Black--Scholes model \eqref{eq:mixed Black-Scholes model} is free of simple arbitrage opportunities if $H > 1/2$, as proven in \cite[Section~4.1]{Bender2012}.

\smallskip

In order to demonstrate that Property~\textup{(RIE)} is satisfied by various financial models based on fractional Brownian motion, we consider the following class of generalized semimartingales. On a filtered probability space $(\Omega, \mathcal{F}, (\mathcal{F}_t)_{t \in [0,\infty)},\P)$ with a complete right-continuous filtration $(\mathcal{F}_t)_{t \in [0,\infty)}$, let $Z$ be a $d$-dimensional process admitting the decomposition
\begin{equation*}
  Z_t = X_t + Y_t, \qquad t \in [0,\infty),
\end{equation*}
where $X$ is a semimartingale, and $Y$ is a c{\`a}dl{\`a}g adapted process with finite $q$-variation for some $q \in [1,2)$. Processes $Z$ of this form are sometimes called \emph{Young semimartingales}, and belong to the class of c{\`a}dl{\`a}g Dirichlet processes in the sense of F\"ollmer \cite{Follmer1981b}.

\smallskip

We introduce stopping times $(\tau^n_k)$, such that, for each $n \in \N$, $\tau^n_0 = 0$, and
$$\tau^n_k := \inf \{t > \tau^n_{k-1} : |X_t - X_{\tau^n_{k-1}}| \geq 2^{-n} \text{ or } |Y_t - Y_{\tau^n_{k-1}}| \geq 2^{-n}\}$$
for $k \in \N$, and set
$$\cP_Z^n := \{\tau^m_k : m \leq n, k \in \N \cup \{0\}\}.$$
As in the previous section, since we insist that the sequence of partitions in Property~\textup{(RIE)} has vanishing mesh size, we also define
\begin{equation*}
  \mathcal{Q}^n_Z := \cP_Z^n \cup \{\sigma^m_k : m \leq n, k \in \N \cup \{0\}\} \cup \{\rho^m_{k,i} : m \leq n, k \in \N \cup \{0\}, i \in \N\},
\end{equation*}
where the times $\sigma^m_k$ and $\rho^m_{k,i}$ are defined analogously as in Section~\ref{subsect: semimartingale}.

\begin{proposition}
  Let $Z = X + Y$ be a $d$-dimensional process such that $X$ is a c{\`a}dl{\`a}g semimartingale, and $Y$ is a c{\`a}dl{\`a}g process with finite $q$-variation for some $q \in [1,2)$. Then, for any $p \in (2,3)$ such that $1/p + 1/q > 1$, almost every sample path of $Z$ is a price path in the sense of Definition~\ref{defn price path}.
\end{proposition}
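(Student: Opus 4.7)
The plan is to decompose $Z = X+Y$ and analyse the four cross-pieces of
$$\int_0^t Z^n_u \otimes \d Z_u = \int_0^t X^n_u \otimes \d X_u + \int_0^t X^n_u \otimes \d Y_u + \int_0^t Y^n_u \otimes \d X_u + \int_0^t Y^n_u \otimes \d Y_u$$
separately, combining the semimartingale analysis of Proposition~\ref{prop: semimartingales satisfy RIE} for the $XX$-piece with Young integration for the remaining three pieces. Throughout I work on a fixed $T>0$ and on the full-measure set on which $X(\omega)$ has finite $p$-variation and $Y(\omega)$ has finite $q$-variation.

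First I would verify the basic structural ingredients of Property~\textup{(RIE)}. Since the stopping times $\tau^n_k$ are triggered by oscillations of either $X$ or $Y$ reaching $2^{-n}$, every jump of $X$ or $Y$ (hence of $Z$) of size at least $2^{-n}$ is a point of $\mathcal{Q}^n_Z$, so $J_Z \subseteq \bigcup_{n} \mathcal{Q}^n_Z$; the auxiliary points $\sigma^m_k, \rho^m_{k,i}$ ensure vanishing mesh on $[0,T]$. Proposition~\ref{prop: uniform convergence} then delivers uniform convergence $Z^n \to Z$. Finite $p$-variation of $Z$ follows from $|Z_{s,t}|^p \lesssim |X_{s,t}|^p + |Y_{s,t}|^p \leq w_{X,p}(s,t) + w_{Y,q}(s,t)^{p/q}$, noting that $w_{Y,q}^{p/q}$ is a control function since $p/q \geq 1$.

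The bulk of the work is the control estimate and convergence of the Riemann sums. For the $XX$-piece I would observe that the partition $\mathcal{Q}^n_Z$ has the property that between consecutive $\tau^n_k$ the oscillation of $X$ is at most $2^{-n}$ (because $\tau^n_k$ monitors $X$'s oscillation as well). Therefore the proof of Proposition~\ref{prop: semimartingales satisfy RIE} goes through verbatim: the left-point Riemann sum $\int_0^t X^n_u\otimes \d X_u$ converges uniformly to the It\^o integral $\int_0^t X_{u-}\otimes \d X_u$ and the two-point quantity $\int_s^t X^n_u\otimes\d X_u - X_s\otimes X_{s,t}$ is controlled by $\tilde{w}_{X,p}(s,t)$. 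For the three remaining pieces, the condition $1/p+1/q>1$ together with $2/q>1$ places us in the Young integration regime for cadlag paths (see e.g.~\cite{FrizHairer2020} or the cadlag Young theory underlying \cite{Friz2018}). Hence $\int X\,\d Y$, $\int Y\,\d X$, $\int Y\,\d Y$ all exist as genuine Young integrals, and the standard Young estimates yield
$$\Big|\int_s^t X^n_u\otimes\d Y_u - X^n_s\otimes Y_{s,t}\Big| \lesssim \|X\|_{p,[s,t]}\|Y\|_{q,[s,t]} \leq w_{X,p}(s,t)^{1/p}w_{Y,q}(s,t)^{1/q},$$
and the analogous bounds for the other two terms; uniform convergence of the Riemann sums follows from uniform convergence $X^n\to X$, $Y^n\to Y$ together with uniformly bounded $p$- and $q$-variation by a standard Young continuity argument.

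To package this into Property~\textup{(RIE)}, set
$$\tilde{w}(s,t) := \tilde{w}_{X,p}(s,t) + w_{Y,q}(s,t)^{p/q} + w_{X,p}(s,t)^{1/2} w_{Y,q}(s,t)^{p/(2q)}.$$
Each summand is a control function: the first comes from the $XX$-analysis; the second is a power $\geq 1$ of a control; and the third is a product $w_1^\alpha w_2^\beta$ with $\alpha+\beta = 1/2 + p/(2q) \geq 1$ (since $p\geq q$), which is a control up to a multiplicative constant by the standard Young-type inequality. After adding $Z_s\otimes Z_{s,t}$ and regrouping, one obtains $|\int_s^t Z^n_u\otimes\d Z_u - Z_s\otimes Z_{s,t}|^{p/2} \leq \tilde{w}(s,t)$ uniformly in $n$ and in $(s,t) \in \Delta_{[0,T]}$ with $s,t\in\mathcal{Q}^n_Z$, which gives \eqref{eq:RIE sups}.

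The main obstacle I anticipate is bookkeeping the cross-terms so that the Young bounds are genuinely uniform in $n$ and assemble into a single control function: in particular, justifying that $w_{X,p}(s,t)^{1/2}w_{Y,q}(s,t)^{p/(2q)}$ provides the correct $p/2$-scaling on the matrix increment $X^n_{s,\cdot}\otimes Y$ (and the symmetric term), and ensuring that the Young convergence of $\int X^n\,\d Y$ to $\int X\,\d Y$ holds uniformly on $[0,T]$ rather than just pointwise. Both points reduce to standard Young-continuity lemmas once the cadlag subtleties at the jump times (which are absorbed into $\bigcup_n \mathcal{Q}^n_Z$) are isolated.
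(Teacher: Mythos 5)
Your four-way decomposition $\int Z^n \otimes \d Z = \int X^n\otimes\d X + \int X^n\otimes\d Y + \int Y^n\otimes\d X + \int Y^n\otimes\d Y$ is exactly the one the paper uses for the control estimate (its Step~2 introduces $\mathbb{X}^n,\mathbb{Y}^n,\mathbb{XY}^n,\mathbb{YX}^n$), and the Young bounds you quote for the three cross-/$YY$-pieces match the paper's. The one genuine difference of route is in establishing uniform convergence of the Riemann sums: the paper instead writes $X=M+A$ and groups $B:=A+Y$, so that only the single decomposition $\int Z^n\otimes\d M + \int Z^n\otimes\d B$ needs handling (BDG/Borel--Cantelli for the $M$-piece, Young continuity plus interpolation for the $B$-piece). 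Your variant works as well, since $\int X^n\otimes\d X$ can be handled by the BDG argument of Proposition~\ref{prop: semimartingales satisfy RIE} and the remaining three pieces by Young continuity once one chooses $p'>p$, $q'>q$ with $1/p'+1/q'>1$; the paper's $Z=M+B$ is just tidier bookkeeping.

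There is, however, a real gap in your treatment of the $XX$-piece: ``the proof of Proposition~\ref{prop: semimartingales satisfy RIE} goes through verbatim'' is not correct, and the claimed control $\tilde w_{X,p}$ for $\mathbb{X}^n$ is insufficient. The partition $\mathcal{Q}^n_Z$ contains stopping times triggered by oscillations of $Y$ as well as of $X$. In Step~2 of the semimartingale proof, the successive-point-removal argument produces a bound $\lesssim N^{1-2/q_0}\,w_{X,q_0}(s,t)^{2/q_0}$ where $N$ is the number of partition points between $s$ and $t$, and the crucial counting $N\le 2^{nq_0}w_{X,q_0}(s,t)$ relies on every intermediate stopping time being caused by an oscillation of $X$ of size $\ge 2^{-n}$. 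In the present setting some of those points are caused only by $Y$, so the count must be replaced by $N\lesssim 2^{nq_0}\bigl(w_{X,q_0}(s,t)+w_{Y,q}(s,t)\bigr)$, which propagates to $\bigl|\int_s^t X^n_{u-}\otimes\d X_u - X_s\otimes X_{s,t}\bigr|\lesssim 2^{n(q_0-2)}\bigl(w_{X,q_0}(s,t)+w_{Y,q}(s,t)\bigr)$. Consequently the $XX$-control $c_{\mathbb{X}}$ must also involve $w_{Y,q}$, not merely $\tilde w_{X,p}$. Your proposed aggregate control $\tilde w = \tilde w_{X,p} + w_{Y,q}^{p/q} + w_{X,p}^{1/2}w_{Y,q}^{p/(2q)}$ does not dominate $w_{Y,q}$ (for small increments $w_{Y,q}^{p/q}\le w_{Y,q}$ since $p/q>1$), so as written the bound \eqref{eq:RIE sups} would fail. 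The fix is simply to include $w_{Y,q}$ in the control, as the paper does via $w_{q_0,q}:=w_{X,q_0}+w_{Y,q}$.
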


\begin{proof}
  \textit{Step 1.}
  It is sufficient to prove that almost all sample paths of $Z$ satisfy Property \textup{(RIE)} along $(\mathcal{Q}^n_Z([0,T]))_{n \in \N}$ for an arbitrary $T > 0$. To this end, for each $n \in \mathbb{N}$ we set
  $$
    Z^n_t := \sum_{[u,v] \in \mathcal{Q}^n_Z([0,T])} Z_{u} \1_{[u,v)}(t), \qquad t \in [0,T],
  $$
  and define $X^n$ and $Y^n$ in the same way with respect to the partition $\mathcal{Q}^n_Z([0,T])$.

  Let $X = M + A$ be a decomposition of the semimartingale $X$ such that $M$ is a locally square integrable martingale and $A$ is of bounded variation. By setting $B := A + Y$, we can write $Z = M + B$, where $B$ has finite $q$-variation. We define the It{\^o} integral of $Z$ with respect to itself by
  $$
    \int_0^t Z_{u-} \otimes \d Z_u := \int_0^t Z_{u-} \otimes \d M_u + \int_0^t Z_{u-} \otimes \d B_u,
  $$
  where the first integral on the right-hand side is an It{\^o} integral and the second one is interpreted as a Young integral, which exists since $Z$ has finite $p$-variation and $1/p + 1/q > 1$; see e.g.~Friz and Zhang \cite{Friz2018}. Then, since $\|Z^n_{\cdot -} - Z_{\cdot -}\|_{\infty,[0,T]} \leq 2^{1 - n}$, by the Burkholder--Davis--Gundy inequality and the Borel--Cantelli lemma, we deduce that for almost all $\omega$ and for every $\varepsilon \in (0,1)$, there exists a constant $C = C(\omega,\varepsilon)$ such that, for all $n \in \N$,
  \begin{equation}\label{eq:int Z dM bound}
    \bigg\|\int_0^\cdot Z^n_{u-} \otimes \d M_u - \int_0^\cdot Z_{u-} \otimes \d M_u\bigg\|_{\infty,[0,T]} \leq C 2^{-n(1-\varepsilon)},
  \end{equation}
  cf.~the proof of \cite[Proposition~3.4]{Liu2018}. By a standard bound for Young integrals (e.g.~\cite[Proposition~2.4]{Friz2018}), for every $t \in [0,T]$, we also have (noting that $Z^n_0 = Z_0$ for all $n$) that
  $$
    \bigg|\int_0^t (Z^n_{u-} - Z_{u-}) \otimes \d B_u\bigg| \leq C \|Z^n_{\cdot -} - Z_{\cdot -}\|_{p,[0,t]}\|B\|_{q,[0,T]},
  $$
  for some constant $C = C(p,q)$. Since $\|Z^n_{\cdot -}\|_{p_0,[0,t]} \leq \|Z\|_{p_0,[0,T]}$ holds for every $n$ and every $p_0 \in (2,p)$, a routine interpolation argument shows that, for each $n \in \N$,
  $$
    \|Z^n_{\cdot -} - Z_{\cdot -}\|_{p,[0,T]} \leq C\|Z^n_{\cdot -} - Z_{\cdot -}\|_{\infty,[0,T]}^{1-\frac{p_0}{p}} \|Z\|_{p_0,[0,T]}^{\frac{p_0}{p}}
  $$
  for some constant $C = C(p,p_0)$. Hence, since $\|Z^n_{\cdot -} - Z_{\cdot -}\|_{\infty,[0,T]} \leq 2^{1-n}$ for all $n$, we have
  \begin{equation}\label{eq:int Z dB bound}
    \lim_{n \rightarrow \infty}\bigg\|\int_0^\cdot Z^n_{u-} \otimes \d B_u - \int_0^\cdot Z_{u-} \otimes \d B_u\bigg\|_{\infty,[0,T]} = 0.
  \end{equation}
  Combining \eqref{eq:int Z dM bound} with \eqref{eq:int Z dB bound}, we conclude that, almost surely, the integral $\int_0^\cdot Z^n_{u-} \otimes \d Z_u$ converges uniformly to $\int_0^\cdot Z_{u-} \otimes \d Z_u$.

  \smallskip

  \textit{Step 2.} For every $n\in \mathbb{N}$ we set
  \begin{align*}
     \mathbb{Z}^n_{s,t} &:= \int_s^t Z^n_{s,u-} \otimes \d Z_u\\
     &= \int_s^t X^n_{s,u-} \otimes \d X_u + \int_s^t Y^n_{s,u-} \otimes \d Y_u + \int_s^t X^n_{s,u-} \otimes \d Y_u + \int_s^t Y^n_{s,u-} \otimes \d X_u, 
  \end{align*}
  for $(s,t) \in \Delta_{[0,T]}$. Moreover, we define
  \begin{equation*}
     \mathbb{X}^n_{s,t} := \int_s^t X^n_{s,u-} \otimes \d X_u, \qquad \mathbb{Y}^n_{s,t}:= \int_s^t Y^n_{s,u-} \otimes \d Y_u,
  \end{equation*}
  and
  \begin{equation*}
    \mathbb{XY}^n_{s,t} := \int_s^t X^n_{s,u-} \otimes \d Y_u, \qquad \mathbb{YX}^n_{s,t} := \int_s^t Y^n_{s,u-} \otimes \d X_u,
  \end{equation*}
  for $(s,t) \in \Delta_{[0,T]}$. We seek a control function $c$ such that
  $$
    \sup_{n \in \N} \sup_{\substack{(s,t) \in \Delta_{[0,T]}\\
    s, t \in \mathcal{Q}^n_Z([0,T])}} \frac{|\mathbb{Z}^n_{s,t}|^{\frac{p}{2}}}{c(s,t)} \leq 1.
  $$
  Towards this aim, we first construct a control function $c_{\mathbb{X}}$ such that the above bound holds for $\mathbb{X}^n$. Since $\|X^n_{\cdot -} - X_{\cdot -}\|_{\infty,[0,T]} \leq 2^{1 - n}$, we still have the bound in \eqref{eq: uniform convergence of Ito integrals}. We also choose $q_0 \in (2,3)$ and $\varepsilon \in (0,1)$ as in \eqref{eq: choice of q and epsilon}, and let $w_{X,q_0}$ and $w_{Y,q}$ be control functions dominating the $q_0$-variation and $q$-variation for $X$ and $Y$ respectively. From the definition of the stopping times $\tau^m_k \in \cP^n_Z$ and the fact that $q < 2 < q_0$, it is easy to check that, for all $s < t$ with $s, t \in \cP^n_Z$, the number $N$ of partition points in $\cP^n_Z$ between $s$ and $t$ can be bounded by
  $$
    N \leq \sum_{m=1}^n \Big(2^{m q_0} w_{X,q_0}(s,t) + 2^{m q} w_{Y,q}(s,t)\Big) \lesssim 2^{n q_0} w_{q_0,q}(s,t),
  $$
  where $w_{q_0,q}(s,t) := w_{X,q_0}(s,t) + w_{Y,q}(s,t)$. By the same argument as in Step~2 of the proof of Proposition~\ref{prop: semimartingales satisfy RIE}, we deduce that, for all $n \in N$ and all $s < t$ with $s, t \in \mathcal{Q}^n_Z$,
  $$
    \bigg|\int_s^t X^n_{u-} \otimes \d X_u - X_s \otimes X_{s,t}\bigg| \lesssim 2^{n(q_0-2)} w_{q_0,q}(s,t),
  $$
  which, as in Step~3 of the proof of Proposition~\ref{prop: semimartingales satisfy RIE}, allows us to conclude the existence of a control function $c_{\mathbb{X}}$ such that
  $$
    \sup_{n \in \N} \sup_{\substack{(s,t) \in \Delta_{[0,T]}\\
    s, t \in \mathcal{Q}_Z^n([0,T])}} \frac{|\mathbb{X}^n_{s,t}|^{\frac{p}{2}}}{c_{\X}(s,t)} \leq 1.
  $$
  Next we use the local estimates of Young integration to show that there exists a control $c_{\mathbb{XY}}$ such that the above bound holds for $\mathbb{XY}^n$ and $c_{\mathbb{XY}}$. Indeed, by \cite[Proposition~2.4]{Friz2018}, for all $s<t$ in $[0,T]$, we have
\begin{align*}
\bigg|\int_s^t X^n_{s,u-} \otimes \d Y_u\bigg|^{p/2} &\leq C_{p,q} \|Y\|_{q,[s,t]|}^{p/2} \|X^n\|_{p,[s,t]}^{p/2} \leq C_{p,q} \|Y\|_{q,[s,t]}^{p/2} \|X\|_{p,[s,t]}^{p/2}\\
&\leq C_{p,q} w_{Y,q}(s,t)^{p/2q} w_{X,p}(s,t)^{1/2},
\end{align*}
  for some constant $C_{p,q}$ depending only on $p$ and $q$. Since $p \in (2,3)$ and $q \in [1,2)$, we have that $p/2q > 1/2$, and thus $p/2q + 1/2 > 1$, which implies that the map $(s,t) \mapsto w_{Y,q}(s,t)^{p/2q} w_{X,p}(s,t)^{1/2}$ is superadditive, and hence is itself a control function. Thus, the control function $c_{\mathbb{XY}} := C_{p,q} w_{Y,q}(s,t)^{p/2q} w_{X,p}(s,t)^{1/2}$ gives the desired bound. Similarly, we can find control functions $c_{\mathbb{YX}}$ and $c_{\mathbb{Y}}$ for $\mathbb{YX}^n$ and $\mathbb{Y}^n$, respectively. Hence, our claim follows by noting that
  $$
    |\mathbb{Z}^n| \leq |\mathbb{X}^n| + |\mathbb{Y}^n| + |\mathbb{XY}^n| + |\mathbb{YX}^n|.
  $$
  All together, we deduce that the sample paths of $Z$ almost surely satisfy Property~\textup{(RIE)} along $(\mathcal{Q}^n_Z([0,T]))_{n \in \N}$.
\end{proof}

\subsection{Typical price paths}

The notion of ``typical price paths'' was introduced by Vovk, who introduced a model-free hedging-based approach to mathematical finance allowing to investigate the sample path properties of such ``typical price paths'' based on arbitrage considerations; see for instance Vovk \cite{Vovk2008,Vovk2012} or Perkowski and Pr\"omel \cite{Perkowski2016}. Let us briefly recall the basic setting and definitions of Vovk's approach.

\smallskip

Let $\Omega_+:=D([0,\infty);\R^d_+)$ be the space of all non-negative c\`adl\`ag functions $\omega\colon [0,\infty)\to\R^d_+$. For each $t\in [0,\infty)$, ${\mathcal{F}}_{t}^{\circ}$ is defined to be the smallest $\sigma$-algebra on~$\Omega_+$ that makes all functions $\omega\mapsto\omega(s)$, $s\in[0,t]$, measurable and ${\mathcal{F}}_{t}$ is defined to be the universal completion of ${\mathcal{F}}_{t}^{\circ}$. Stopping times $\tau\colon\Omega_+\to [0,\infty)\cup \{ \infty \} $ with respect to the filtration $({\mathcal{F}}_{t})_{t\in[0,\infty)}$ and the corresponding $\sigma$-algebras ${\mathcal{F}}_{\tau}$ are defined as usual. The coordinate process on $\Omega_+$ is denoted by~$S$, i.e.~$S_{t}(\omega):=\omega(t)$ for $t\in [0,\infty)$.

\smallskip

A process $H\colon \Omega_+ \times [0,\infty)\to\R^d$ is a \emph{simple (trading) strategy} if there exists a sequence of stopping times $0 = \sigma_0 < \sigma_1 <  \sigma_2 < \dots$ such that for every $\omega\in\Omega_+$ there exists an $N(\omega)\in \N$ such that $\sigma_{n}(\omega)=\sigma_{n+1}(\omega)$ for all $n\geq N(\omega)$, and a sequence of $\mathcal{F}_{\sigma_n}$-measurable bounded functions $h_n\colon \Omega_+\to \R^d$, such that $H_t(\omega) = \sum_{n=0}^\infty h_n(\omega) \1_{(\sigma_n(\omega),\sigma_{n+1}(\omega)]}(t)$ for $t \in [0,\infty)$. For a simple strategy $H$, the corresponding integral process
\begin{equation*}
  (H \cdot S)_t(\omega) :=  \sum_{n=0}^\infty h_n(\omega) S_{\sigma_n\wedge t, \sigma_{n+1} \wedge t}(\omega) 
\end{equation*}
is well-defined for all $(t,\omega) \in [0,\infty)\times \Omega_+$. For $\lambda > 0$, we write $\mathcal{H}_{\lambda}$ for the set of all simple strategies $H$ such that $(H\cdot S)_t(\omega) \geq - \lambda $ for all $(t,\omega) \in [0,\infty)\times \Omega_+$.

\begin{definition}
  \emph{Vovk's outer measure} $\overline{P}$ of a set $A \subseteq \Omega_+$ is defined as the minimal super-hedging price for $\1_A$, that is
  \begin{equation*}
    \overline{P}(A) := \inf\Big\{\lambda > 0 : \exists (H^n)_{n \in \N} \subset \mathcal{H}_{\lambda} \text{ s.t. } \forall \omega \in \Omega_+, \liminf_{n \to \infty} (\lambda + (H^n\cdot S)_T(\omega)) \ge \1_A(\omega)\Big\}.
  \end{equation*}
  A given set $A \subseteq \Omega_+$ is called a \emph{null set} if it has outer measure zero. A property (P) holds for \emph{typical price paths} if the set $A$ where (P) is violated is a null set.
\end{definition} 

\begin{remark}
  Loosely speaking, the outer measure~$\overline{P}$ corresponds to the (model-free)  notion of ``no unbounded profit with bounded risk'', see \cite[Section~2.2]{Perkowski2016} for a more detailed discussion in this direction. Furthermore, the outer measure~$\overline{P}$ dominates all local martingale measures on the space~$\Omega_+$, see \cite[Lemma~2.3 and Proposition~2.5]{Lochowski2018}. As a consequence, all results proven for typical price paths hold simultaneously under all martingale measures (or in other words quasi-surely with respect to all martingale measures).
\end{remark}

Let us recall that typical price paths are of finite $p$-variation for every $p > 2$ (see Vovk \cite[Theorem~1]{Vovk2011}) and Vovk's model-free framework allows for setting up a model-free It{\^o} integration, see e.g.~\L{}ochowski, Perkowski and Pr\"omel \cite{Lochowski2018}.

\begin{lemma}\label{lem: typical price paths is RIE}
Typical price paths are price paths in the sense of Definition~\ref{defn price path}, with any $p \in (2,3)$.
\end{lemma}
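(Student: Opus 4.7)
The proof will closely follow the blueprint of Proposition~\ref{prop: semimartingales satisfy RIE}, with the classical It\^o integral and the Burkholder--Davis--Gundy inequality replaced by their model-free counterparts available in Vovk's framework. Fix $T > 0$ and $p \in (2,3)$. First I would construct the sequence of partitions exactly as in Section~\ref{subsect: semimartingale}: set $\tau^n_0 := 0$ and $\tau^n_k := \inf\{t > \tau^n_{k-1} : |S_t - S_{\tau^n_{k-1}}| \geq 2^{-n}\}$, define $\cP^n_S := \{\tau^m_k : m \leq n,\, k \in \N \cup \{0\}\}$, and augment it with the auxiliary times $\sigma^m_k$ and $\rho^m_{k,i}$ associated with constancy intervals to obtain a nested sequence $(\mathcal{Q}^n_S)_{n \in \N}$ of stopping-time partitions with vanishing mesh on every compact interval. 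The $\tau^n_k$ are stopping times in the coordinate filtration, so the left-point Riemann sums are integrals of simple strategies.

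Next I would verify the three bullet points of Property~\textup{(RIE)}. The uniform convergence of $S^n$ to $S$ is immediate from the construction, since $\|S^n_{\cdot-} - S_{\cdot-}\|_{\infty,[0,T]} \leq 2^{1-n}$. For the convergence of the Riemann sums, I would invoke the model-free It\^o integration theory of \L{}ochowski, Perkowski and Pr\"omel \cite{Lochowski2018} to identify the candidate limit $\int_0^{\cdot} S_{u-} \otimes \d S_u$ as the model-free It\^o integral of $S$ against itself (which exists for typical price paths). The rate of uniform convergence of the approximations $\int_0^\cdot S^n_{u-} \otimes \d S_u$ to this limit would then follow from a Vovk-type maximal inequality combined with a Borel--Cantelli argument, yielding that for typical $\omega$ and every $\varepsilon \in (0,1)$ there exists $C = C(\omega,\varepsilon)$ with
\begin{equation*}
\bigg\|\int_0^\cdot S^n_{u-} \otimes \d S_u - \int_0^\cdot S_{u-} \otimes \d S_u\bigg\|_{\infty,[0,T]} \leq C\, 2^{-n(1-\varepsilon)}, \qquad n \in \N,
\end{equation*}
in complete analogy with \eqref{eq: uniform convergence of Ito integrals}.

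For the control function bound \eqref{eq:RIE sups}, I would combine Vovk's result \cite[Theorem~1]{Vovk2011} that typical price paths have finite $q_0$-variation for every $q_0 > 2$, with the finite $p/2$-variation of the model-free It\^o lift (as established in the continuous case by Perkowski and Pr\"omel \cite{Perkowski2016}, and which for c\`adl\`ag typical price paths should follow from the same telescoping argument as in Step~2 of the proof of Proposition~\ref{prop: semimartingales satisfy RIE}, applied to the hitting-time partition $\mathcal{Q}^n_S$). Specifically, choose $q_0 \in (2,3)$ and $\varepsilon \in (0,1)$ satisfying \eqref{eq: choice of q and epsilon}, let $w_{S,q_0}$ dominate the $q_0$-variation, and use that in the hitting-time construction the number of partition points in any sub-interval $[s,t]$ with $s,t \in \mathcal{Q}^n_S$ is at most $\lesssim 2^{n q_0} w_{S,q_0}(s,t)$. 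Steps~2 and~3 of the proof of Proposition~\ref{prop: semimartingales satisfy RIE} then transfer verbatim, splitting into the regimes $w_{S,q_0}(s,t)^{2/(p(1-\varepsilon))} \leq 2^{-n}$ and $w_{S,q_0}(s,t)^{2/(p(1-\varepsilon))} \geq 2^{-n}$, to produce a control function $\tilde w_{S,p}$ giving the required bound uniformly in $n$.

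The main obstacle will be justifying the Vovk-type concentration rate used above with a matching exponent of $2$. The analogous continuous-path statement was obtained by Perkowski and Pr\"omel \cite{Perkowski2016}, but in the c\`adl\`ag setting one must carefully handle the jumps contributing to $\int_0^\cdot S^n_{u-} \otimes \d S_u$ near partition points and argue that the martingale-type inequalities underlying \cite{Lochowski2018} extend to give the same exponential control as in the semimartingale case; once this is in hand, the remainder of the argument is an immediate transcription of the proof of Proposition~\ref{prop: semimartingales satisfy RIE}.
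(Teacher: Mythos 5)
Your approach is exactly the one the paper takes: the proof is a verbatim transcription of the proof of Proposition~\ref{prop: semimartingales satisfy RIE}, with the It\^o integral and BDG--Borel--Cantelli rate estimate replaced by their model-free counterparts. The ``main obstacle'' you flag (the Vovk-type concentration rate with exponent $1-\varepsilon$ for c\`adl\`ag typical price paths and the finite $p/2$-variation of the model-free It\^o lift) is not a gap: it is precisely \cite[Proposition~3.10]{Liu2018}, which, together with \cite[Corollary~4.9]{Lochowski2018} for the existence of the model-free It\^o integral, is what the paper cites to make the transcription immediate, so your argument closes once you invoke these two results in place of the tentative ``Vovk-type maximal inequality'' and Perkowski--Pr\"omel continuous-path analogues.
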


The proof of Lemma~\ref{lem: typical price paths is RIE} works verbatim as that of Proposition~\ref{prop: semimartingales satisfy RIE}, keeping in mind \cite[Proposition~3.10]{Liu2018} and \cite[Corollary~4.9]{Lochowski2018}, and is therefore omitted for brevity.

\subsection{Consistency of rough and stochastic integration}\label{subsect: rough integral vs stochastic integral}

In a probabilistic framework when the underlying process is a semimartingale, one can employ either rough or stochastic It\^o integration. In this subsection we briefly demonstrate that, under Property \textup{(RIE)}, these two integrals actually coincide almost surely whenever both are defined.

\smallskip

Let us again fix a filtered probability space $(\Omega,\mathcal{F},(\mathcal{F}_t)_{t \in [0,\infty)},\P)$, and assume that the filtration $(\mathcal{F}_t)_{t \in [0,\infty)}$ satisfies the usual conditions.

\begin{proposition}
Let $X = (X_t)_{t \in [0,\infty)}$ be a $d$-dimensional c{\`a}dl{\`a}g semimartingale. Let $Y$ be an adapted c\`adl\`ag process such that, for almost every $\omega \in \Omega$, the path $Y(\omega) \in \mathcal{A}_{X(\omega)}$ is an admissible strategy (in the sense of Definition~\ref{def: admissible strategy}). Then the rough and It\^o integrals of $Y$ against $X$ coincide almost surely. That is,
$$\int_0^t Y_s(\omega) \dd \bX_s(\omega) = \bigg(\int_0^t Y_{s-} \dd X_s\bigg)(\omega), \qquad \text{for all} \quad t \in [0,\infty),$$
for almost every $\omega \in \Omega$, where $\bX(\omega)$ is the canonical rough path lift of $X(\omega)$, as defined in Lemma~\ref{lem: SSA is rough path}.
\end{proposition}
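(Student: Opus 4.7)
The plan is to recognise that both integrals may be written as the same pathwise limit of left-point Riemann sums taken along a common sequence of stopping time partitions, and to compare them on a full measure set.

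First, I would invoke the proof of Proposition~\ref{prop: semimartingales satisfy RIE} to extract a measurable set $\Omega_0 \subseteq \Omega$ of full probability such that, for every $\omega \in \Omega_0$, the path $X(\omega)$ satisfies Property~\textup{(RIE)} with respect to $p$ and the nested sequence of stopping time partitions $(\mathcal{Q}^n_X(\omega))_{n \in \N}$, and simultaneously $Y(\omega) \in \mathcal{A}_{X(\omega)}$. For each such $\omega$, combining Proposition~\ref{prop: capital process} with Theorem~\ref{thm: rough int as limit Riemann sums} identifies the rough integral as the locally uniform limit
\[
\int_0^t Y_s(\omega) \dd \bX_s(\omega) = \lim_{n \to \infty} \sum_{k=0}^{N_n-1} Y_{\tau^n_k}(\omega)\, X_{\tau^n_k \wedge t,\, \tau^n_{k+1} \wedge t}(\omega), \qquad t \in [0,\infty),
\]
where $\mathcal{Q}^n_X(\omega) = \{0 = \tau^n_0 < \tau^n_1 < \cdots\}$. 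The key observation is that these are precisely the left-point Riemann sums associated with random partitions made of stopping times whose mesh sizes tend to zero almost surely on every compact interval.

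Next, I would appeal to the classical approximation theorem for stochastic integrals along sequences of stopping time partitions with vanishing mesh (see e.g.~Protter \cite[Ch.~II, Thm.~21]{Protter2005}), which, for the adapted c\`adl\`ag integrand $Y$ against the semimartingale $X$, guarantees that the very same Riemann sums converge to the It\^o integral $\int_0^\cdot Y_{s-}\dd X_s$ in the uniform-on-compacts-in-probability (u.c.p.) topology. From this u.c.p. convergence I would extract a subsequence that converges almost surely, locally uniformly in $t$, to the It\^o integral.

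Combining the pathwise rough-integral convergence available on $\Omega_0$ with this almost sure subsequential convergence to the It\^o integral, both limits agree on a common full measure subset, yielding the stated identity. The only point requiring genuine verification is that the partitions $(\mathcal{Q}^n_X)_{n \in \N}$ constructed in Section~\ref{subsect: semimartingale} really consist of stopping times with mesh tending to zero on compact intervals; both properties are explicit in their construction, so invoking the classical approximation theorem is immediate, and the argument reduces to comparing two limits of identical sums.
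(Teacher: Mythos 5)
Your proposal is correct and follows essentially the same strategy as the paper's proof: invoke Proposition~\ref{prop: semimartingales satisfy RIE} to get a full-measure event on which \textup{(RIE)} holds along the stopping-time partitions, use Theorem~\ref{thm: rough int as limit Riemann sums} to identify the rough integral pathwise as the limit of the left-point Riemann sums, invoke Protter's Theorem~II.21 for u.c.p.\ convergence of the same sums to the It\^o integral, pass to an almost surely convergent subsequence, and compare limits. The only cosmetic difference is that you cite Proposition~\ref{prop: capital process} in addition to Theorem~\ref{thm: rough int as limit Riemann sums}, which is harmless but unnecessary.
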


\begin{proof}
Fix $T > 0$. By Proposition~\ref{prop: semimartingales satisfy RIE}, we know that, for any $p \in (2,3)$, almost every sample path of $X$ is a price path, and satisfies Property \textup{(RIE)} along a nested sequence of adapted partitions $\mathcal{P}^n_X = \{0 = t^n_0 < t^n_1 < \cdots < t^n_{N_n} = T\}$, $n \in \N$, of the interval $[0,T]$ with vanishing mesh size. Since $Y(\omega) \in \mathcal{A}_{X(\omega)}$, there exists a c\`adl\`ag process $Y'$, such that $(Y(\omega),Y'(\omega)) \in \mathcal{V}^{q,r}_{X(\omega)}$ is a controlled path on $[0,T]$, for almost every $\omega \in \Omega$ and some suitable numbers $q, r$.

By \cite[Theorem~II.21]{Protter2005}, we have that
\begin{equation}\label{eq:Riemann sums converge to Ito int in prob}
\sum_{k=0}^{N_n-1} Y_{t^n_k} X_{t^n_k \wedge t,t^n_{k+1} \wedge t} \longrightarrow \int_0^t Y_{s-} \dd X_s \qquad \text{as} \quad n \longrightarrow \infty,
\end{equation}
uniformly in probability, for $t \in [0,T]$. By taking a subsequence if necessary, we can then assume that the (uniform) convergence in \eqref{eq:Riemann sums converge to Ito int in prob} holds almost surely. On the other hand, by Theorem~\ref{thm: rough int as limit Riemann sums}, we know that, for almost every $\omega \in \Omega$,
\begin{equation}\label{eq:Riemann sums conv to rough int in prob setting}
\sum_{k=0}^{N_n-1} Y_{t^n_k}(\omega) X_{t^n_k \wedge t,t^n_{k+1} \wedge t}(\omega) \longrightarrow \int_0^t Y_s(\omega) \dd \bX_s(\omega) \qquad \text{as} \quad n \longrightarrow \infty,
\end{equation}
uniformly for $t \in [0,T]$. Combining \eqref{eq:Riemann sums converge to Ito int in prob} and \eqref{eq:Riemann sums conv to rough int in prob setting}, we deduce that, almost surely, $\int_0^t Y_s \dd \bX_s = \int_0^t Y_{s-} \dd X_s$ for all $t \in [0,T]$. Since $T > 0$ was arbitrary, the result follows.
\end{proof}

\bibliography{quellen}{}
\bibliographystyle{abbrv}

\end{document}